\documentclass[11pt,reqno]{amsart}
\usepackage[T1]{fontenc}
\usepackage{lmodern}
\usepackage{mathtools,amssymb,enumitem,aliascnt}
\usepackage[margin=1.12in]{geometry}
\usepackage{microtype}
\usepackage[colorlinks=true,linkcolor=blue,citecolor=blue,urlcolor=blue]{hyperref}
\usepackage[capitalise,noabbrev]{cleveref}
\hypersetup{pdftitle={Uniform Hilbert embeddings of Banach spheres and Property (H)},pdfauthor={Qingjin Cheng and Yue Wang}}

\newaliascnt{proposition}{theorem}
\newtheorem{proposition}[proposition]{Proposition}
\aliascntresetthe{proposition}
\crefname{proposition}{proposition}{propositions}
\Crefname{proposition}{Proposition}{Propositions}
\newaliascnt{lemma}{theorem}
\newtheorem{lemma}[lemma]{Lemma}
\aliascntresetthe{lemma}
\crefname{lemma}{lemma}{lemmas}
\Crefname{lemma}{Lemma}{Lemmas}
\newaliascnt{corollary}{theorem}

\aliascntresetthe{corollary}
\crefname{corollary}{corollary}{corollaries}
\Crefname{corollary}{Corollary}{Corollaries}
\newtheorem{theoremA}{Theorem}

\Crefname{theoremA}{Theorem}{Theorems}
\theoremstyle{definition}
\newaliascnt{definition}{theorem}
\newtheorem{definition}[definition]{Definition}
\aliascntresetthe{definition}
\crefname{definition}{definition}{definitions}
\Crefname{definition}{Definition}{Definitions}
\theoremstyle{remark}
\newaliascnt{remark}{theorem}

\aliascntresetthe{remark}
\crefname{remark}{remark}{remarks}
\Crefname{remark}{Remark}{Remarks}
\numberwithin{equation}{section}
\DeclareMathOperator{\dist}{dist}
\DeclareMathOperator{\ran}{ran}

\DeclareMathOperator{\Lip}{Lip}
\DeclarePairedDelimiter{\norm}{\lVert}{\rVert}
\DeclarePairedDelimiter{\abs}{\lvert}{\rvert}
\DeclarePairedDelimiter{\ip}{\langle}{\rangle}
\newcommand{\R}{\mathbb R}

\newcommand{\Id}{I}

\setlist{itemsep=3pt,topsep=5pt}

\title[Banach spheres and Property~(H)]{Uniform Embeddings of Banach Space Spheres into Hilbert Spaces and Property (H)}
\author{Qingjin Cheng}
\address{School of Mathematical Sciences, Xiamen University, Xiamen 361005, China}
\email{qjcheng@xmu.edu.cn}

\author{Yue Wang}
\address{School of Mathematical Sciences, Xiamen University, Xiamen 361005, China}
\email{19020260158074@stu.xmu.edu.cn}

\author{Yipeng Zhang}
\address{School of Mathematical Sciences, Xiamen University, Xiamen 361005, China}
\email{19020250157654@stu.xmu.edu.cn}
\date{}
\subjclass[2020]{Primary 46B20, 19K56; Secondary 46B07, 46L80, 55M25}
\keywords{Banach spheres, uniform embeddings, Property (H), topological degree, coarse embeddings, Novikov conjecture}
\begin{document}
\begin{abstract}
We prove that if the unit sphere of a separable infinite-dimensional real Banach space  embeds uniformly into a Hilbert space, then the Banach space has Kasparov--Yu's Property~(H). 
\end{abstract}
\maketitle

\section{Introduction and main theorem}\label{sec:introduction}

Kasparov and Yu introduced Property~(H) and proved the Strong Novikov conjecture with coefficients for countable discrete groups that coarsely embed into Banach spaces with this property \cite{KY}. Their condition combines finite-dimensional approximation with topological degree: along a paving of the Banach space, one requires compatible sphere maps into a Hilbert space, of degree one on every finite-dimensional sphere. The purpose of this paper is to establish that uniform embeddability of the unit sphere of a Banach space into a Hilbert space implies Property~(H).

Unless otherwise stated, Banach spaces are real. For a normed space $E$, we denote its closed unit ball and unit sphere by
\[
B_E=\{x\in E:\norm x\le1\}, \qquad S_E=\{x\in E:\norm x=1\},
\]
and equip them with the distance induced by the norm. We write $E^*$ for the continuous dual of $E$. Hilbert norms are denoted by $\norm{\cdot}_2$, and inner products by $\ip{\cdot,\cdot}$. A \emph{paving} of a separable infinite-dimensional Banach space $X$ is a strictly increasing sequence $(V_n)_{n\ge1}$ of finite-dimensional subspaces with dense union. We assume $\dim V_1\ge2$; every increasing sequence with dense union has a subsequence with these properties. Finite-dimensional spaces are oriented, their unit spheres carry the boundary orientations, and products carry the product orientations. The degree of a continuous map between connected closed oriented manifolds of the same dimension is the integer by which the map acts on top singular homology with integer coefficients.

A \emph{modulus of continuity} is a nondecreasing function $\omega:[0,2]\to[0,2]$ such that $\omega(0)=0$ and $\lim_{t\to0}\omega(t)=0$. For maps into unit spheres, upper bounds are truncated at $2$ when regarded as moduli of continuity.

\begin{definition}[Kasparov--Yu \cite{KY}]\label{def:KY}
A separable infinite-dimensional Banach space $X$ has \emph{Property~(H)} if there are pavings $(V_n)$ of $X$ and $(H_n)$ of a real Hilbert space $H$, with $\dim H_n=\dim V_n$, and a uniformly continuous map
\[
\psi:S_V\longrightarrow S_W,\qquad V=\bigcup_n V_n,\quad W=\bigcup_n H_n,
\]
such that
\[
\psi(S_{V_n})\subset S_{H_n}, \qquad \deg(\psi|_{S_{V_n}})=1 \qquad(n\ge1).
\]
Replacing degree one by nonzero degree defines \emph{rational Property~(H)}.
\end{definition}

We use this degree formulation throughout. It is explicitly allowed in \cite[Definition~1.1]{KY}; no homeomorphism condition is imposed. The classical Mazur maps give Property~(H) for $\ell_p$, $1\le p<\infty$, and the noncommutative Mazur maps give it for the Schatten classes \cite[pp.~1859--1860]{KY}. Cheng and Wang \cite{CW} proved Property~(H) for all separable infinite-dimensional Banach lattices of finite cotype.

We shall use the standard notion of a uniform embedding. A map $f:\mathcal M\to\mathcal N$ between metric spaces is a \emph{uniform embedding} if $f$ is uniformly continuous and $f^{-1}:f(\mathcal M)\to\mathcal M$ is uniformly continuous. Equivalently, with
\[
\omega_f(t)=\sup\{d_{\mathcal N}(f(x),f(y)):d_{\mathcal M}(x,y)\le t\}
\]
and
\[
\rho_f(t)=\inf\{d_{\mathcal N}(f(x),f(y)):d_{\mathcal M}(x,y)\ge t\},
\]
one has $\lim_{t\to0}\omega_f(t)=0$ and $\rho_f(t)>0$ for every $t>0$.

Our main result is the following.

\begin{theoremA}\label{thm:A}
Let $X$ be a separable infinite-dimensional real Banach space. If $S_X$ admits a uniform embedding into a Hilbert space, then $X$ has Property~(H).
\end{theoremA}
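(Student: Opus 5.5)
The strategy is to go through the known structure theory: uniform embeddability of $S_X$ into Hilbert space forces $X$ to be, after renorming and uniform homeomorphism, a space whose sphere is uniformly homeomorphic to that of a lattice of finite cotype. We then transport the Cheng--Wang result \cite{CW} through degree considerations.

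\textbf{Step 1: reduce to finite cotype.} If $S_X$ embeds uniformly into a Hilbert space, then so does $B_X$, via the radial extension $x\mapsto \norm{x}f(x/\norm{x})$ suitably corrected near $0$, or via the standard cone construction. Hence $X$ embeds coarsely and uniformly into Hilbert space. By Mendel--Naor's metric cotype, or simply the fact that $\ell_\infty^n$ do not embed uniformly with uniform moduli, $X$ has finite cotype. A more robust route to the same conclusion is to note that uniform embeddings of spheres with fixed moduli pass to finite-dimensional subspaces. If $X$ contained $\ell_\infty^n$ uniformly, Enflo's argument would give a contradiction.

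\textbf{Step 2: build compatible degree-one maps directly.} Rather than passing through lattices, I would try a direct construction. Fix a uniform embedding $f:S_X\to H$. Choose a paving $(V_n)$ and, for each $n$, let $H_n$ be the span of finitely many vectors capturing $f(S_{V_n})$ up to $\varepsilon_n$, enlarged so that $\dim H_n=\dim V_n$. Here the non-collapsing modulus $\rho_f$ is used. We then compose the approximate projection of $f|_{S_{V_n}}$ with normalization $v\mapsto P_nf(v)/\norm{P_nf(v)}_2$, after first subtracting a suitable center so that the image avoids $0$. The degree is computed by a homotopy argument. Since $f(x)$ and $f(-x)$ are separated by $\rho_f(2)>0$, an odd-type map $g(x)=f(x)-f(-x)$ never vanishes. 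Borsuk--Ulam then gives that $g/\norm{g}$ restricted to $S_{V_n}$ has odd degree after compressing to $\dim V_n$ dimensions. Its uniform continuity follows from $\omega_f$ together with the lower bound $\rho_f(2)$.

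\textbf{Step 3: fix compatibility and degree exactly one.} The hard part is making the compressions nested, $\psi(S_{V_n})\subset S_{H_n}$, with dimensions matching exactly, degree exactly $1$ rather than merely odd, and a single uniform modulus for all $n$. I would use an inductive construction. Given $\psi_n$ on $S_{V_n}$, extend to $S_{V_{n+1}}$ by writing $V_{n+1}=V_n\oplus L$ and using a join/suspension construction that sends the new directions into $H_{n+1}\ominus H_n$. The extension is controlled by the uniform moduli of $g$, using quantitative Borsuk--Ulam estimates to keep the degree multiplicative. An odd degree $d$ can be corrected to $1$ by precomposing with a uniformly controlled self-map of degree $1/d$ only rationally. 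So the main obstacle is obtaining exact degree one, which gives full rather than rational Property~(H). For that I would ensure that $g$ restricted to a small equatorial sphere is close to a linear isomorphism, using differentiability almost everywhere of Lipschitz-ized versions of $f$ in the Aharoni--Maurey--Mityagin spirit. That closeness fixes the degree as $\pm1$, and orientation is then chosen to make it $+1$.
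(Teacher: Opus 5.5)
Your proposal has genuine gaps. The main one is in Step~2, at the phrase ``compressing to $\dim V_n$ dimensions.'' A subspace $H_n$ capturing $f(S_{V_n})$ up to $\varepsilon_n$ has, in general, dimension $m$ far larger than $\dim V_n$, so it cannot be ``enlarged'' to dimension $\dim V_n$; you would have to cut down to $\dim V_n$ dimensions. The lower bound $\norm{g(x)}_2\ge\rho_f(2)$ for the odd map $g(x)=f(x)-f(-x)$ does not survive that projection. The projected map may vanish, and then there is no degree at all. Even when it does not vanish, its norm has no lower bound independent of $n$; for instance, a random $\dim V_n$-dimensional projection shrinks norms by a factor of order $(\dim V_n/m)^{1/2}$. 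So the normalized maps have no common modulus, and Borsuk's theorem would at best give odd degree at a single level.

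The paper resolves the mismatch by enlarging the domain rather than shrinking the target. The fixed-scale lower estimate gives a local return map $\beta$ with $\beta\circ\varphi$ homotopic to the radial map (\Cref{lem:local-return}). A product with a Euclidean sphere balances dimensions and gives $|\deg F|=1$ by a unique-preimage count (\Cref{lem:product}). The radius is then removed by the tube and collapse maps, and the join turns the product into the sphere of $E\oplus_2\ell_2^{N+1}$. Finally, Dvoretzky's theorem in a finite-codimensional subspace realizes the new Euclidean directions inside $X$ with bounded distortion (\Cref{lem:extension}). Nothing in your outline replaces this.

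Step~1 is also unusable. Uniform embeddability of $S_X$ does not make $X$ coarsely or uniformly embeddable into Hilbert space: the Mazur map makes $S_{\ell_p}$, $p>2$, uniformly homeomorphic to $S_{\ell_2}$, yet $\ell_p$ embeds in neither sense (Johnson--Randrianarivony, Aharoni--Maurey--Mityagin). No reduction to lattices is available, and finite cotype plays no role in the proof.

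Step~3 has two further gaps. First, getting degree exactly one via differentiability fails. A uniform embedding gives lower bounds only at positive scales, so a Lipschitz regularization need not have an injective derivative anywhere. Even a good local derivative only fixes the local degree at one preimage, not the global degree, unless that preimage is unique. Also, self-maps of degree $1/d$ do not exist.

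Second, an inductive join/suspension extension gives no control of the modulus through infinitely many steps. The paper avoids induction altogether:
\begin{enumerate}
\item It replaces each $f_n$ by a homotopic map with a common $1/2$-H\"older constant (\Cref{lem:regularization}).
\item It extends this map to $S_X$ by Minty's theorem, adding one coordinate $\xi_n\in H_{n+1}\ominus H_n$ that keeps the extension in a closed hemisphere (\Cref{lem:lift}).
\item It composes with a self-map of $S_X$ that is constant off a small cap around $v_n$, where $\dist(v_n,V_{n-1})=1$ (\Cref{lem:cap}).
\item It pastes the results over the pairwise disjoint caps.
\end{enumerate}
On $S_{V_m}$ the degree is then the sum of the cap contributions. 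The pieces with $n<m$ miss $-\xi_n$ and contribute zero, and the $m$th piece contributes $\deg f_m=1$.
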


We first compress the uniform embedding to finite-dimensional Hilbert balls. A fixed-scale lower estimate allows us to recover degree after taking products with Euclidean spheres. Radius removal, a join construction, and Dvoretzky's theorem then produce a paving $(V_n)$ and degree-one maps
\[
f_n:S_{V_n}\longrightarrow S_{\ell_2^{\dim V_n}}
\]
with one common modulus of continuity. No compatibility between the maps $f_n$ is imposed at this stage.

The maps $f_n$ need not agree on the smaller spheres. We next replace them by maps with a common H\"older estimate and extend them to $S_X$ by Minty's theorem. One additional Hilbert coordinate keeps the extensions nonzero. We then localize these extensions on pairwise disjoint caps and paste them together. On each $S_{V_n}$, the last cap has the prescribed degree, while the earlier caps have degree zero. The resulting map satisfies the conditions in Property~(H).

Section~\ref{sec:finite} constructs the finite-dimensional degree-one maps. Section~\ref{sec:compatibility} pastes them into one uniformly continuous map and completes the proof of Theorem~A. Section~\ref{sec:applications} gives applications to noncommutative $L_p$-spaces and to the Novikov conjecture.

\section{Finite-dimensional degree maps from a Hilbert embedding}\label{sec:finite}

Throughout this section, $X$ is a separable infinite-dimensional real Banach space. For $r>0$, let
\[
B_E(r)=rB_E, \qquad S_E(r)=rS_E.
\]
For a nonempty subset $A$ of a normed space, let
\[
\dist(x,A)=\inf_{a\in A}\norm{x-a}.
\]
We write $\Lip(f)$ for the least Lipschitz constant of $f$, and use the scalar notation
\[
t_+=\max\{t,0\}.
\]
For a smooth map $f$, the symbols $Df$ and $D^2f$ denote its first and second Fr\'echet derivatives. Norms of linear and bilinear maps are operator norms. For a bounded linear map $T$ between Hilbert spaces, $T^*$ denotes its adjoint, and $\ran T$ denotes its range. The symbol $\Id$ denotes the identity operator on the space in question.

We first construct degree-one maps on a paving of $X$ with one common modulus of continuity. Section~\ref{sec:compatibility} will then paste these maps into a single map on $S_X$.

\begin{proposition}\label{prop:finite-paving}
Let $X$ be a separable infinite-dimensional real Banach space. If $S_X$ admits a uniform embedding into a Hilbert space, then there are a paving $(V_n)$ of $X$ and degree-one maps
\[
f_n:S_{V_n}\longrightarrow S_{\ell_2^{\dim V_n}}
\]
with a common modulus of continuity. If the given embedding is $\alpha$-H\"older, $0<\alpha\le1$, the maps $f_n$ may be chosen $\alpha$-H\"older with a common constant depending only on the embedding.
\end{proposition}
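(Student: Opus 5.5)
The introduction outlines the route: compress the embedding to finite-dimensional Hilbert balls, get a fixed-scale lower estimate, recover degree by taking products with Euclidean spheres, then apply radius removal, a join construction and Dvoretzky's theorem. I would follow that route.

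\emph{Step 1 (compression).} Let $u:S_X\to H$ be the uniform embedding, with moduli $\omega_u,\rho_u$. Fix a scale $t_0\in(0,1)$ and set $\delta=\rho_u(t_0)>0$. For a finite-dimensional subspace $E\subset X$, the set $u(S_E)$ is compact. So there is a finite-dimensional subspace $K\subset H$, and the orthogonal projection $P_K$, such that $\norm{P_Ku(x)-u(x)}_2<\delta/4$ on $S_E$. The map $P_Ku$ still satisfies $\norm{P_Ku(x)-P_Ku(y)}_2\ge\delta/2$ whenever $\norm{x-y}\ge t_0$, and its modulus of continuity is at most $\omega_u$. After translating and scaling, it takes values in a Euclidean ball whose radius depends only on $u$. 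All constants obtained this way depend only on $\omega_u$ and $\delta$, not on $E$. H\"older bounds survive projection, so the same argument gives the H\"older clause.

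\emph{Step 2 (degree from a fixed-scale lower bound).} Take $x\in S_E$ and compare $P_Ku(x)$ with $P_Ku(-x)$. Their distance is at least $\delta/2$ because $\norm{x-(-x)}=2\ge t_0$. Hence the odd map
\[
g(x)=\frac{P_Ku(x)-P_Ku(-x)}{\norm{P_Ku(x)-P_Ku(-x)}_2}
\]
is well defined and has a uniform modulus of continuity controlled by $\omega_u/\delta$. Since $g$ is odd, its restriction to spheres has odd degree by Borsuk's theorem, but only if the dimensions match. The dimension of $K$ is in general larger than $\dim E$, and this is where products with Euclidean spheres come in. I would consider $E\oplus\R^m$ with a suitable norm and the map $(x,s)\mapsto$ (join of $g$ with the identity on $S_{\ell_2^m}$). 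The join of an odd map on $S_E$ with an identity is still odd, and the Hilbert side becomes $K\oplus\R^m$. I would then choose $E$ so that $\dim E=\dim K$, using an inductive enlargement: enlarge $E$ to absorb the extra Hilbert dimensions, re-project, and repeat. The equicontinuity is uniform in these choices. Oddness only gives odd degree, though, and we need exactly one. To fix this I would precompose with a reflection or compose with a degree-correcting map. A cleaner option is a straight-line homotopy to a linear isomorphism on a subspace where $g$ is close to linear. To obtain such a subspace, I would use Dvoretzky's theorem: $E$ contains almost-Euclidean subspaces of every dimension. On an almost-Euclidean $F$, the normalized map $x\mapsto x/\norm{x}_{\ell_2}$ is bi-Lipschitz with constants near $1$. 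This is what lets the join construction raise the degree to exactly one.

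\emph{Step 3 (radius removal and the paving).} Fix a dense sequence $(x_k)$ in $X$. Inductively let $V_n$ be spanned by $V_{n-1}$, by $x_n$, and by a large almost-Euclidean block $F_n$ supplied by Dvoretzky. Write $V_n=G_n\oplus F_n$ with a bounded projection; the Euclidean factor $S_{F_n}$ plays the role of the sphere factor in Step 2. I define $f_n$ on $S_{V_n}$ as the join of the compressed odd map on the $G_n$ direction with the near-identity on the $F_n$ direction. Radius removal means normalizing radially, which is Lipschitz on the region where the norm of the join is bounded below; the fixed-scale estimate $\delta$ provides that lower bound. I expect the main obstacle to be keeping the modulus of continuity independent of $n$ while forcing the degree to be exactly $1$, not merely odd. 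The projection constants onto $F_n$ must stay bounded, which Dvoretzky-type complemented choices give in the form of an almost-orthogonal splitting with constant at most $2$. The degree must be computed from the join formula $\deg(g*h)=\deg g\cdot\deg h$. In addition, the lower bound $\delta/2$ must propagate through the normalization.
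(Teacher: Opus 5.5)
\textbf{There is a genuine gap.} Your Step~1 is fine and is essentially \Cref{lem:embedding-data}. Your almost-orthogonal Dvoretzky splitting in Step~3 also resembles \Cref{lem:extension}. The problem is Step~2, and it carries over to Step~3: you never obtain a map between spheres of the \emph{same} dimension, so there is no degree to compute.

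The compressed map sends $S_E$ (dimension $d-1$) into a Hilbert space $K$. Its dimension $m$ is dictated by approximating the compact set $u(S_E)$, and in general $m>d$. So the odd map $g\colon S_E\to S_K$ has no degree, and Borsuk's theorem gives nothing. Joining $g$ with the identity of a Euclidean sphere adds the same number of dimensions to source and target, so the defect $\dim K-\dim E$ is unchanged. The iteration ``enlarge $E$, re-project, repeat'' has no mechanism forcing $\dim E=\dim K$: enlarging $E$ enlarges $u(S_E)$, hence the dimension needed to approximate it, and nothing ties the two together.

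Even with matching dimensions, oddness gives only an odd degree of unknown size. A reflection changes only its sign, and you do not say how a correction with a modulus independent of $E$ would be made.

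The Dvoretzky remedy does not help either. Dvoretzky makes the \emph{norm} nearly Euclidean on some subspace, but says nothing about the nonlinear map $g$ being nearly linear there. Moreover, the complementary part $G_n$, which must carry $V_{n-1}$ and $x_n$, is prescribed and may be far from Euclidean. On $S_{G_n}$ the only uniformly controlled map you have is the compressed embedding, which has the wrong target dimension. Hence the join in Step~3 has no degree, and $\deg(g*h)=\deg g\cdot\deg h$ cannot be invoked. Note also that you use the fixed-scale lower bound only to separate antipodal points, which is far too weak to control degree.

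\textbf{The missing idea} is how the paper absorbs the excess Hilbert dimensions. Instead of matching $\dim E$ with $m$, it multiplies $S_E$ by a Euclidean sphere $S_{\ell_2^{m+1}}$ (dimension $m$) and uses a graph-type map.
\begin{itemize}
\item The estimate $\norm{x-y}\le D\norm{\varphi(x)-\varphi(y)}_2+1/8$, applied through a $\tau$-net of $\varphi(S_E)$, yields a Lipschitz local inverse $\beta\colon\R^m\to B_{\ell_2^d}$ near $\varphi(S_E)$. The composite $\beta\circ\varphi$ is homotopic to the radial homeomorphism (\Cref{lem:local-return}).
\item On the upper hemisphere, $F(x,z)$ is the normalization of $(\theta(z)\varphi(x)-z,\beta(z))$; on the lower hemisphere it is an explicit map independent of $x$. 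This maps $S_E\times S_{\ell_2^{m+1}}$, of dimension $d+m-1$, to $S_{\ell_2^{d+m}}$, of the same dimension. After homotopies, a point $(0,y)$ has exactly one preimage, so $\abs{\deg F}=1$ (\Cref{lem:product}).
\item The Lipschitz constant of $F$ in $z$ depends on $E$. The paper therefore needs the Gaussian tube and collapse lemmas to remove the radius.
\item A join then passes from $S_E\times S_{\ell_2^{N+1}}$ to $S_{E\oplus_2\ell_2^{N+1}}$. Only after that is Dvoretzky used to realize $E\oplus_2\ell_2^{N+1}$ inside $X$ while fixing $E$, and a reflection turns degree $-1$ into $+1$.
\end{itemize}
Without a construction of this kind, your proposal does not produce maps $f_n$ with a well-defined degree, let alone degree one.
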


We first construct the required maps after a finite-dimensional enlargement of an arbitrary subspace of $X$.

\subsection{Finite-dimensional Hilbert approximation}
\label{subsec:hilbertian}

Orthogonal projections onto finite-dimensional Hilbert subspaces preserve the modulus of continuity. Uniform continuity of the inverse will be used only at one fixed scale.

A Hilbert space uniformly embeds into a Hilbert sphere; see \cite[Corollary~8.11]{BL2000}. For a real Hilbert space $H$, consider the map
\[
J(h)=e^{-\norm{h}_2^2/2} \left( 1,h,\frac{h^{\otimes2}}{\sqrt{2!}},\ldots \right), \qquad h\in H.
\]
It takes values in the unit sphere of the Hilbert direct sum
\[
\left(\bigoplus_{n=0}^{\infty}H^{\otimes n}\right)_{\ell_2},
\]
where $H^{\otimes0}=\R$ and $H^{\otimes n}$ is the $n$-fold Hilbert tensor product. Here $h^{\otimes n}$ denotes the tensor product of $n$ copies of $h$. Moreover,
\[
\norm{J(h)-J(k)}_2^2 = 2\left(1-e^{-\norm{h-k}_2^2/2}\right).
\]
Hence $J$ is Lipschitz and $J^{-1}$ is uniformly continuous on $J(H)$. We may therefore assume that the given uniform embedding of $S_X$ takes values in a Hilbert sphere. On every finite-dimensional sphere $S_E$, its image is compact and can be approximated by a finite-dimensional Hilbert subspace.

\begin{lemma}\label{lem:embedding-data}
Let $\Phi:S_X\to S_{\ell_2}$ be a uniform embedding. Then there are a modulus $\omega$ and a constant $D\ge1$ such that, for every finite-dimensional subspace $E\subset X$ with $\dim E\ge2$, there are an integer $m_E\ge1$ and a map
\[
\varphi_E:S_E\longrightarrow B_{\ell_2^{m_E}}
\]
satisfying
\[
\norm{\varphi_E(x)-\varphi_E(y)}_2 \le \omega(\norm{x-y})
\]
and
\[
\norm{x-y} \le D\norm{\varphi_E(x)-\varphi_E(y)}_2+\frac1{32}
\]
for all $x,y\in S_E$.
\end{lemma}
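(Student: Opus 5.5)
Set $\omega=\omega_\Phi$, the modulus of continuity of $\Phi$ truncated at $2$, and take the scale $t_0=1/64$. Because $\Phi$ is a uniform embedding, $\rho_\Phi(t_0)>0$, and we put
\[
D=\max\{1,\,4/\rho_\Phi(t_0)\}.
\]
Neither quantity depends on $E$. Now fix a finite-dimensional subspace $E\subset X$ with $\dim E\ge2$. The set $K=\Phi(S_E)$ is compact in $\ell_2$, since $S_E$ is compact and $\Phi$ is continuous. So there is a finite-dimensional subspace $F\subset\ell_2$ with $\dist(k,F)\le\rho_\Phi(t_0)/8$ for all $k\in K$. To get $F$, choose a finite $\rho_\Phi(t_0)/8$-net of $K$ and let $F$ be its span. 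Set $m_E=\max\{1,\dim F\}$ and fix a linear isometry $F\cong\ell_2^{m_E}$; if $F=0$, embed it trivially. Define $\varphi_E=P_F\circ\Phi|_{S_E}$, where $P_F$ is the orthogonal projection onto $F$. It takes values in $B_{\ell_2^{m_E}}$, because $\Phi$ maps into $S_{\ell_2}$ and $P_F$ has norm at most $1$.

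The upper estimate follows directly. $P_F$ is $1$-Lipschitz, so
\[
\norm{\varphi_E(x)-\varphi_E(y)}_2\le\norm{\Phi(x)-\Phi(y)}_2\le\omega(\norm{x-y}).
\]

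For the lower estimate, take $x,y\in S_E$. If $\norm{x-y}\le 1/32$, the inequality holds trivially. Suppose instead $\norm{x-y}>1/32\ge t_0$. Then $\norm{\Phi(x)-\Phi(y)}_2\ge\rho_\Phi(t_0)=:\rho$. Since $\norm{\Phi(z)-P_F\Phi(z)}_2\le\rho/8$ for every $z\in S_E$, the triangle inequality gives
\[
\norm{\varphi_E(x)-\varphi_E(y)}_2\ge\rho-\rho/4=3\rho/4.
\]
Because $\norm{x-y}\le2$, we get
\[
\norm{x-y}\le 2\le D\cdot 3\rho/4\cdot(8/(3\rho))\cdot\ldots
\]
To close this cleanly, choose $D=\max\{1,8/(3\rho)\}$. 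Then $D\cdot3\rho/4\ge2\ge\norm{x-y}$, and the claimed inequality holds. In both cases the constants $\omega$ and $D$ come only from $\Phi$.

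There is no serious obstacle in this argument. The point to watch is that the finite-dimensional approximation error must be measured against the fixed scale $\rho_\Phi(t_0)$ and not against $E$. This keeps $D$ independent of $E$, while $m_E$ is allowed to depend on $E$ through compactness. One small check remains: $\omega$ must be a genuine modulus with values in $[0,2]$. This holds because $\Phi$ maps into the unit sphere, so distances between images are at most $2$, and $\Phi$ is uniformly continuous.
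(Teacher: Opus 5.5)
Your proof is correct and is essentially the paper's argument: a fixed-scale lower bound from the uniform continuity of $\Phi^{-1}$, combined with a finite-rank orthogonal projection that approximates the compact set $\Phi(S_E)$ within an error tied only to that scale. The differences are cosmetic. The paper first records the global inequality $\norm{x-y}\le D\norm{\Phi(x)-\Phi(y)}_2+1/64$ with $D=2/a$, then perturbs it additively using a coordinate projection with error $1/(128D)$, whereas you do the case split after projecting. Your garbled line revising $D$ was unnecessary: the original choice $D=\max\{1,4/\rho\}$ already gives $D\cdot 3\rho/4\ge 3\ge\norm{x-y}$.
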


\begin{proof}
Let $\omega$ be a modulus for $\Phi$. Since $\Phi^{-1}$ is uniformly continuous on $\Phi(S_X)$, choose $a\in(0,1]$ so that $\norm{\Phi(x)-\Phi(y)}_2\le a$ implies $\norm{x-y}\le1/64$, and set $D=2/a$. Then
\[
\norm{x-y} \le D\norm{\Phi(x)-\Phi(y)}_2+\frac1{64} \qquad(x,y\in S_X).
\]
Fix a finite-dimensional subspace $E\subset X$. Since $\Phi(S_E)$ is compact in $\ell_2$, there is an integer $m_E\ge1$ such that the coordinate projection $P_{m_E}:\ell_2\to\ell_2^{m_E}$ satisfies
\[
\sup_{x\in S_E} \norm{\Phi(x)-P_{m_E}\Phi(x)}_2 \le\frac1{128D}.
\]
Set
\[
\varphi_E=P_{m_E}\Phi|_{S_E}.
\]
Since $P_{m_E}$ is a contraction,
\[
\norm{\varphi_E(x)-\varphi_E(y)}_2 \le \omega(\norm{x-y}).
\]
Moreover,
\[
\norm{\Phi(x)-\Phi(y)}_2 \le \norm{\varphi_E(x)-\varphi_E(y)}_2+\frac1{64D},
\]
and therefore
\[
\norm{x-y} \le D\norm{\varphi_E(x)-\varphi_E(y)}_2+\frac1{32}.
\]
\end{proof}
We use these estimates to construct a degree-one sphere map after enlarging $E$ inside $X$.

\subsection{Degree recovery on product manifolds}
\label{subsec:product}

Fix a finite-dimensional subspace $E\subset X$ with $d=\dim E\ge2$. By \Cref{lem:embedding-data}, there are an integer $m\ge1$, a map $\varphi:S_E\to B_{\ell_2^m}$, a modulus $\omega$, and a constant $D\ge1$, independent of $E$, such that
\begin{align}
\norm{\varphi(x)-\varphi(y)}_2 &\le \omega(\norm{x-y}), \label{eq:criterion-upper}\\
\norm{x-y} &\le D\norm{\varphi(x)-\varphi(y)}_2+\frac18 \label{eq:criterion-lower}
\end{align}
for all $x,y\in S_E$. Here we have only weakened the constant $1/32$ in \Cref{lem:embedding-data} to $1/8$.

The lower estimate \eqref{eq:criterion-lower} gives a map back to $S_E$ from a neighborhood of $\varphi(S_E)$. This map is used to compute the degree of a map on the product with a Euclidean sphere.

We shall use two elementary estimates. For nonzero vectors $a,b$ in a normed space,
\begin{equation}\label{eq:radial}
\left\lVert \frac{a}{\norm a}-\frac{b}{\norm b} \right\rVert \le \frac{2\norm{a-b}}{\max\{\norm a,\norm b\}}.
\end{equation}
For a uniformly continuous map $f:S_E\to Y$ into a normed space, put
\[
\omega_f(s)=\sup\{\norm{f(x)-f(y)}:\norm{x-y}\le s,\ x,y\in S_E\}.
\]
Sch\"affer's intrinsic-metric estimate \cite[Theorem~3.5]{Schaffer1967} gives
\begin{equation}\label{eq:sphere-chain}
\omega_f(s)\le(2s/t+1)\omega_f(t) \qquad(0\le s\le2,\ 0<t\le2).
\end{equation}
Indeed, two points of $S_E$ at distance $s$ can be joined in $S_E$ by an arc of length at most $2s$, and the arc can be divided into pieces of length at most $t$.

The support of a map is the closure of the set where it is nonzero.

\begin{lemma}\label{lem:local-return}
Let $E$ be a $d$-dimensional normed space, $d\ge2$, and let $\varphi:S_E\to B_{\ell_2^m}$ be continuous and satisfy
\[
\norm{x-y} \le D\norm{\varphi(x)-\varphi(y)}_2+\frac18 \qquad (x,y\in S_E)
\]
for some $D\ge1$. Put $\tau=(16D)^{-1}$ and $K=\varphi(S_E)$. Then there is a Lipschitz map $\beta:\R^m\to B_{\ell_2^d}$ such that $\norm{\beta(z)}_2=1$ when $\dist(z,K)\le\tau$ and $\beta(z)=0$ when $\dist(z,K)\ge3\tau/2$, with $\operatorname{supp}\beta\subset\operatorname{int}B_{\ell_2^m}(2)$. Moreover, $\beta\circ\varphi:S_E\to S_{\ell_2^d}$ has degree one.
\end{lemma}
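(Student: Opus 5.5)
The plan is to build a ``return map'' from a thin neighborhood of $K=\varphi(S_E)$ back to $E\setminus\{0\}$ using a Lipschitz partition of unity subordinate to a finite net of $K$. We then radially normalize it into $S_{\ell_2^d}$ through a fixed orientation-preserving linear isomorphism $h:E\to\R^d$, and cut it off with a scalar function of $\dist(z,K)$. Degree one will follow from a straight-line homotopy to the identity of $S_E$; the lower estimate in the hypothesis is exactly what keeps this homotopy away from $0$.

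\emph{Construction.} Since $K$ is compact, choose finitely many points $x_1,\dots,x_N\in S_E$ such that $\{\varphi(x_i)\}$ is $\tau/2$-dense in $K$. Put
\[
g_i(z)=\bigl(3\tau-\norm{z-\varphi(x_i)}_2\bigr)_+ ,\qquad G=\sum_i g_i .
\]
If $\dist(z,K)\le 3\tau/2$, then some $i$ satisfies $\norm{z-\varphi(x_i)}_2\le2\tau$, so $G(z)\ge\tau$. On the set $U=\{z:\dist(z,K)\le3\tau/2\}$ set $\lambda_i=g_i/G$ and define
\[
r(z)=\sum_i\lambda_i(z)x_i .
\]
Each $g_i$ is $1$-Lipschitz, $G$ is bounded below by $\tau$ on $U$, and the $x_i$ are bounded, so $r$ is Lipschitz on $U$.

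\emph{Key estimate.} Fix $z\in U$ and pick $x\in S_E$ with $\norm{z-\varphi(x)}_2\le 3\tau/2$. Every index $i$ with $\lambda_i(z)>0$ satisfies
\[
\norm{\varphi(x_i)-\varphi(x)}_2\le 3\tau+3\tau/2\le5\tau .
\]
The hypothesis then gives
\[
\norm{x_i-x}\le 5D\tau+\tfrac18=\tfrac5{16}+\tfrac18=\tfrac7{16}.
\]
Since $r(z)$ is a convex combination of such $x_i$, we get $\norm{r(z)-x}\le 7/16$, and hence $\norm{r(z)}\ge 9/16$. In particular $r$ never vanishes on $U$, and $\norm{h(r(z))}_2$ is bounded below by a positive constant there. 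By \eqref{eq:radial},
\[
N(z)=\frac{h(r(z))}{\norm{h(r(z))}_2}
\]
is Lipschitz on $U$.

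\emph{Cutoff and support.} Let $\chi(s)=\min\{1,(3\tau/2-s)_+/(\tau/2)\}$ and define
\[
\beta(z)=\chi(\dist(z,K))\,N(z)\quad (z\in U),\qquad \beta(z)=0 \quad (z\notin U).
\]
Then $\beta$ is Lipschitz on all of $\R^m$, because $\chi\circ\dist(\cdot,K)$ is Lipschitz and vanishes on $\partial U$, while $N$ is bounded and Lipschitz on $U$. We have $\norm{\beta(z)}_2=1$ when $\dist(z,K)\le\tau$, and $\beta(z)=0$ when $\dist(z,K)\ge3\tau/2$. Since $K\subset B_{\ell_2^m}$ and $3\tau/2\le 3/32$, the support of $\beta$ lies in $\operatorname{int}B_{\ell_2^m}(2)$.

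\emph{Degree.} For $x\in S_E$ we have $\beta(\varphi(x))=N(\varphi(x))$. The key estimate, applied with $z=\varphi(x)$, gives $\norm{r(\varphi(x))-x}\le7/16$. Therefore the segment
\[
x_t=(1-t)x+t\,r(\varphi(x)),\qquad t\in[0,1],
\]
stays at distance at least $9/16$ from $0$. Consequently $h(x_t)/\norm{h(x_t)}_2$ is a homotopy from $\beta\circ\varphi$ to $x\mapsto h(x)/\norm{h(x)}_2$. The latter map is the radial homeomorphism $S_E\to S_{\ell_2^d}$ induced by the orientation-preserving isomorphism $h$. It has degree one with respect to the boundary orientations, since it extends to the orientation-preserving homeomorphism $B_E\to B_{\ell_2^d}$ given by radial rescaling of $h$. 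Hence $\deg(\beta\circ\varphi)=1$.

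The only genuinely delicate step is the key estimate. It must show that the convex averaging does not approach $0$, which is where the constant $1/8$ and the choice $\tau=(16D)^{-1}$ are calibrated. The remaining steps are routine.
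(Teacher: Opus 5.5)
Your proposal is correct and follows essentially the same route as the paper. Both arguments use a convex average of net points $x_i$, weighted by tent functions of $\norm{z-\varphi(x_i)}_2$, followed by normalization through an orientation-preserving linear isomorphism, a cutoff in $\dist(z,K)$, and a straight-line homotopy to the radial map. The only differences are your constants: you use a $\tau/2$-net with bumps of radius $3\tau$, where the paper uses a $\tau/4$-net with bumps of radius $2\tau$. With your choice the averaged point stays within $7/16$ of $x$, which still keeps it away from the origin.
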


\begin{proof}
Choose $x_1,\ldots,x_N\in S_E$ so that $\{\varphi(x_i)\}_{i=1}^N$ is a $\tau/4$-net in $K$. For $\dist(z,K)\le3\tau/2$, set
\begin{equation}\label{eq:local-return}
a(z)= \frac{\sum_{i=1}^N (2\tau-\norm{z-\varphi(x_i)}_2)_+x_i} {\sum_{i=1}^N (2\tau-\norm{z-\varphi(x_i)}_2)_+}.
\end{equation}
The denominator is at least $\tau/4$, so $a$ is Lipschitz.

Let $\varphi(x)\in K$ be nearest to $z$. If the $i$th term in \eqref{eq:local-return} is nonzero, then $\norm{\varphi(x_i)-\varphi(x)}_2<7\tau/2$. Hence $\norm{x_i-x}<11/32$. Since $a(z)$ is a convex combination of such $x_i$, we have $\norm{a(z)-x}<11/32$, and therefore $\norm{a(z)}>1/2$. When $z=\varphi(x)$, the same argument gives $\norm{a(\varphi(x))-x}\le1/4$.

Choose an orientation-preserving linear isomorphism $T:E\to\R^d$. Since $a$ does not vanish, set
\[
b(z)=Ta(z)/\norm{Ta(z)}_2
\]
on the $3\tau/2$-neighborhood of $K$. Choose a Lipschitz function $\chi:[0,\infty)\to[0,1]$ with $\chi=1$ on $[0,\tau]$ and $\chi=0$ on $[3\tau/2,\infty)$, and define
\[
\beta(z)=\chi(\dist(z,K))b(z)
\]
on this neighborhood and $\beta(z)=0$ outside it. Since $K\subset B_{\ell_2^m}$ and $3\tau/2\le3/32$, the support of $\beta$ is contained in $\operatorname{int}B_{\ell_2^m}(2)$.

Finally, the segment joining $x$ to $a(\varphi(x))$ avoids the origin, because $\norm{a(\varphi(x))-x}\le1/4$. Applying $T$ and normalizing gives a homotopy from $\beta\circ\varphi$ to $x\mapsto Tx/\norm{Tx}_2$. The latter has degree one, and hence so does $\beta\circ\varphi$.
\end{proof}
Combining $\beta$ and $\varphi$ gives a map of absolute degree one on a product of spheres.

\begin{lemma}\label{lem:product}
Let $E$ be a $d$-dimensional normed space, $d\ge2$, and let $\varphi:S_E\to B_{\ell_2^m}$ satisfy
\[
\norm{\varphi(x)-\varphi(y)}_2 \le \omega(\norm{x-y})
\]
and
\[
\norm{x-y} \le D\norm{\varphi(x)-\varphi(y)}_2+\frac18 \qquad (x,y\in S_E),
\]
for some modulus $\omega$ and some $D\ge1$. Then there are $R\ge1$ and a continuous map
\[
F:S_E\times S_{\ell_2^{m+1}}(R) \longrightarrow S_{\ell_2^{d+m}}
\]
such that $|\deg F|=1$ and
\begin{align}
\norm{F(x,z)-F(y,z)}_2 &\le 32D\,\omega(\norm{x-y}), \label{eq:F-x}\\
\norm{F(x,z)-F(x,w)}_2 &\le \norm{z-w}_2. \label{eq:F-z}
\end{align}
The radius $R$ may depend on $E$.
\end{lemma}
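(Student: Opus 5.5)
The plan is to build $F$ as the normalization of an explicit nonvanishing map into $\R^{m}\times\R^{d}=\R^{d+m}$. On the $\R^d$ factor we use the map $\beta$ from \Cref{lem:local-return}; the $\R^m$ factor records the displacement of a parameter $u$ from $\varphi(x)$. As a model, consider on $S_E\times\R^m$
\[
W(x,u)=\bigl(u-\varphi(x),\ \beta(u)\bigr)\in\R^m\times\R^d .
\]
This map satisfies $\norm{W(x,u)}_2\ge\tau$ with $\tau=(16D)^{-1}$. Indeed, if $\norm{u-\varphi(x)}_2\le\tau$, then $\dist(u,K)\le\tau$, so $\norm{\beta(u)}_2=1$; otherwise the first component already has norm $>\tau$. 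Since $\beta$ does not depend on $x$, the radial estimate \eqref{eq:radial} gives
\[
\norm*{\frac{W(x,u)}{\norm{W(x,u)}_2}-\frac{W(y,u)}{\norm{W(y,u)}_2}}_2\le\frac{2\,\omega(\norm{x-y})}{\tau}=32D\,\omega(\norm{x-y}),
\]
which is exactly \eqref{eq:F-x}.

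Next we compactify the $u$-variable onto a sphere of radius $R=\lambda R_0$. Write $z=(u,t)\in S_{\ell_2^{m+1}}(R)$ with $u\in\R^m$, $t\in\R$, and use the rescaled variable $u/\lambda$. We must prevent the map from vanishing in the upper region, where $u$ is small again (for instance near the north pole $u=0$, $t=R$). To do this, fix a unit vector $e\in\R^d$ and Lipschitz cutoffs $\eta,\sigma$ of the variable $t/\lambda$. Here $\eta=1$ for $t\le0$ and $\eta=0$ for $t\ge\lambda$; $\sigma=0$ for $t\le0$ and $\sigma\ge1$ for $t\ge\lambda$. Put
\[
W(x,z)=\Bigl(\tfrac{u}{\lambda}-\varphi(x),\ \eta\bigl(\tfrac t\lambda\bigr)\beta\bigl(\tfrac u\lambda\bigr)+\sigma\bigl(\tfrac t\lambda\bigr)e\Bigr),\qquad F=W/\norm{W}_2 .
\]
Choose $R_0$ large, say $R_0\ge 10$, so that in the transition band $0\le t\le\lambda$ we have $\norm{u/\lambda}_2\ge R_0/2>3$. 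There $\beta(u/\lambda)=0$, and the first component has norm $\ge1$ since $\norm{\varphi}_2\le1$. For $t\ge\lambda$ the second component equals $\sigma e$, of norm $\ge1$. For $t\le0$ we are in the model situation above. Hence $\norm{W}_2\ge\tau$ everywhere, and the $x$-estimate \eqref{eq:F-x} is unchanged. In $z$, $W$ is Lipschitz with constant $C/\lambda$, where $C$ depends only on $\Lip(\beta)$ and on the cutoffs. By \eqref{eq:radial}, $F$ is then $2C/(\lambda\tau)$-Lipschitz in $z$, so a choice of $\lambda$ with $2C\le\lambda\tau$ gives \eqref{eq:F-z}. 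Because $\Lip(\beta)$ depends on $E$ (through the net), $R$ depends on $E$, as the statement allows.

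The main obstacle is showing $|\deg F|=1$. I would compute the degree as the local degree at the regular direction $(0,e')\in\R^m\times\R^d$, with $e'\ne e$ a unit vector in a generic position. A point $(x,z)$ maps to this direction exactly when $W(x,z)$ is a positive multiple of $(0,e')$. This forces $t<0$ (otherwise $\sigma e$ would contribute, or $\beta$ vanishes while the first component does not) and $u/\lambda=\varphi(x)$. So the preimage lies in the graph $\{u=\lambda\varphi(x),\ t<0\}$, where $F(x,z)=(0,\beta(\varphi(x)))$.

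To make the count rigorous, I would homotope $W$ near this graph, linearly in the fibre coordinate $v=u/\lambda-\varphi(x)$, to the map $(v,\beta(\varphi(x)))$. The homotopy stays nonvanishing, because the second component is a unit vector whenever $\norm{v}_2\le\tau$. The result is a map of the form $(\text{fibre coordinate}, g(x))$ on a tubular neighbourhood, with $g=\beta\circ\varphi$. Its degree is, up to an orientation sign, the product of $\deg(\beta\circ\varphi)=1$ (by \Cref{lem:local-return}) and the degree $\pm1$ of the fibrewise identity collapse $S^m\to S^m$. Alternatively, one can use a clutching/Thom-collapse argument to identify $F$, up to homotopy, with the join-type map $S_E\times S^m\to S^{d-1}* S^{m}$ built from $\beta\circ\varphi$. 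Either route yields $|\deg F|=1$; the only care needed is in tracking the sign, which the statement does not require.
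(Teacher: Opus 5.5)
Your proof is correct and uses the same mechanism as the paper. You normalize a vector whose first component is a displacement from $\varphi(x)$ and whose second component is $\beta$. You get the lower bound $\tau$ from the dichotomy ($\dist\le\tau$, or displacement $>\tau$), obtain $32D\,\omega$ from \eqref{eq:radial}, and get the $z$-estimate by enlarging the sphere.

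The organization differs in two places.
\begin{enumerate}[label=(\roman*)]
\item Compactification. The paper identifies the hemispheres with two copies of $B_{\ell_2^m}(3)$. On one it uses $(\theta(z)\varphi(x)-z,\beta(z))$, where $\theta$ removes the $x$-dependence near the equator; on the other it uses an explicit $x$-independent formula involving $e_0$. It then checks the $z$-Lipschitz bound hemisphere by hemisphere, joins across the equator by arcs, and rescales. You work on the large sphere in ambient coordinates $(u,t)$. The size of $R_0$ kills $\beta(u/\lambda)$ and makes the displacement long in the band $0\le t\le\lambda$, and together with the term $\sigma e$ this replaces both $\theta$ and the second hemisphere. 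Your chordal Lipschitz bound in $z$ then comes in one step.
\item Degree. The paper runs explicit homotopies fixing the equator until $(0,y)$ has a single preimage at which the map is a local homeomorphism. You localize at the graph $u=\lambda\varphi(x)$, $t<0$, and appeal to a product formula for local degree.
\end{enumerate}
Your route is shorter and avoids the gluing; the paper's is more elementary on the topological side.

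Two details in your degree sketch should be tightened.
\begin{enumerate}[label=(\roman*)]
\item The fibre homotopy $\beta(\varphi(x)+(1-s)v)$ must be cut off by a function of $\norm{v}_2$ supported in $\norm{v}_2<2\tau$, inside $\{t<0\}$. Unlocalized, it alters the map at $t=0$, where it must match the band. With the cutoff it stays nonvanishing by the same dichotomy $\norm{v}_2\le\tau$ versus $\norm{v}_2>\tau$.
\item Since $g=\beta\circ\varphi$ is only continuous, $g^{-1}(e')$ need not be finite, so ``generic position'' and regular values are not available. You can either invoke the cross-product formula in local homology for $g\times\mathrm{id}$, or argue more simply as follows. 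In the tube, only the points with $v=0$ constrain the second component, since elsewhere the first component is nonzero. So you may further homotope $g$ to $r_T$ near $v=0$, after which $(0,e')$ has exactly one preimage, where the map is $(v,r_T(x))/\sqrt{1+\norm{v}_2^2}$, a local homeomorphism, as in the paper.
\end{enumerate}
Only $e'\ne e$ is needed, not genericity.
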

\begin{proof}
Let $\tau=(16D)^{-1}$, $K=\varphi(S_E)$, and let $\beta:\R^m\to B_{\ell_2^d}$ be given by \Cref{lem:local-return}. Identify the upper and lower hemispheres of $S_{\ell_2^{m+1}}$ with two copies of $B_{\ell_2^m}(3)$ by $(s',s_{m+1})\mapsto3s'$. Set
\[
\theta(z)=\min\{1,3-\norm z_2\}
\]
and fix $e_0\in S_{\ell_2^d}$. On the two hemispheres define
\begin{align}
F_+(x,z) &= \frac{(\theta(z)\varphi(x)-z,\beta(z))} {\norm{(\theta(z)\varphi(x)-z,\beta(z))}_2}, \label{eq:Fplus}\\
F_-(x,z) &= \left( -\frac z3, \sqrt{1-\frac{\norm z_2^2}{9}}\,e_0 \right). \label{eq:Fminus}
\end{align}

The denominator in \eqref{eq:Fplus} is bounded below by $\tau$. Indeed, if $\dist(z,K)\le\tau$, then $\norm{\beta(z)}_2=1$. If $\dist(z,K)>\tau$ and $\norm z_2\le2$, then $\theta(z)=1$ and $\norm{\varphi(x)-z}_2>\tau$. Finally, if $\norm z_2\ge2$, then $\norm{\theta(z)\varphi(x)-z}_2\ge1$.

For $\norm z_2=3$, both formulas equal $(-z/3,0)$, so they define a continuous map
\[
F_0:S_E\times S_{\ell_2^{m+1}} \longrightarrow S_{\ell_2^{d+m}}.
\]
Only the first component of the numerator in \eqref{eq:Fplus} depends on $x$. Hence \eqref{eq:radial} and \eqref{eq:criterion-upper} give
\[
\norm{F_0(x,z)-F_0(y,z)}_2 \le 32D\,\omega(\norm{x-y}).
\]
The lower hemisphere is independent of $x$, so the same estimate holds everywhere.

For fixed $x$, the upper-hemisphere formula is Lipschitz in $z$, with a constant independent of $x$, because $\theta$ and $\beta$ are Lipschitz, $\norm{\varphi(x)}_2\le1$, and the denominator is at least $\tau$. In the original sphere coordinate $s=(s',s_{m+1})$, the lower-hemisphere formula is $(-s',-s_{m+1}e_0)$ and is $1$-Lipschitz. The two formulas agree on the equator. Joining points in different hemispheres by a shortest spherical arc therefore gives a common Lipschitz constant $L$ in the second variable. Choose $R\ge\max\{1,L\}$ and define $F(x,z)=F_0(x,z/R)$ for $z\in S_{\ell_2^{m+1}}(R)$. Then \eqref{eq:F-x} is unchanged and \eqref{eq:F-z} holds.

To compute the degree, let
\[
b=\beta\circ\varphi:S_E\to S_{\ell_2^d}.
\]
The proof of \Cref{lem:local-return} gives a homotopy from $b$ to the orientation-preserving radial homeomorphism $r_T(x)=Tx/\norm{Tx}_2$, where $T:E\to\R^d$ is an orientation-preserving linear isomorphism. Choose a continuous function $\lambda:B_{\ell_2^m}(3)\to[0,1]$ such that $\lambda(z)=1$ for $\norm z_2\le1$ and $\lambda(z)=0$ for $\norm z_2\ge2$.

Keep the lower-hemisphere map fixed. On the upper hemisphere, normalize the vectors
\[
\bigl(\theta(z)\varphi(x)-z,\ (1-t)\beta(z)+t\lambda(z)b(x)\bigr), \qquad 0\le t\le1.
\]
These vectors never vanish. Indeed, if the first component is zero, then $\norm z_2\le1$, so $\theta(z)=\lambda(z)=1$ and $z=\varphi(x)$. The second component is then $b(x)$, which has norm one. On the equator $\norm z_2=3$, the normalized vector is always $(-z/3,0)$. Thus this homotopy extends over the whole product.

Next, on the upper hemisphere normalize
\[
\bigl((1-t)\theta(z)\varphi(x)-z,\ \lambda(z)b(x)\bigr), \qquad 0\le t\le1.
\]
If the first component vanishes, then again $\norm z_2\le1$, and the second component has norm one. This homotopy also fixes the equator. Its endpoint is
\[
\frac{(-z,\lambda(z)b(x))} {\norm{(-z,\lambda(z)b(x))}_2}.
\]
Finally, substitute the homotopy from $b$ to $r_T$ into this formula. The denominator remains nonzero: if $z=0$, the second component is a unit vector. We obtain a map $\widetilde F$ homotopic to $F_0$, whose upper-hemisphere formula is
\[
\widetilde F_+(x,z) = \frac{(-z,\lambda(z)r_T(x))} {\norm{(-z,\lambda(z)r_T(x))}_2},
\]
and whose lower-hemisphere formula is still \eqref{eq:Fminus}.

Choose $y\in S_{\ell_2^d}\setminus\{e_0\}$. The point $(0,y)$ has no preimage in the lower hemisphere and has exactly one preimage in the upper hemisphere, namely $(r_T^{-1}(y),0)$. Near this point, $\lambda(z)=1$ and
\[
\widetilde F_+(x,z) = \frac{(-z,r_T(x))}{\sqrt{1+\norm z_2^2}}.
\]
This is a local homeomorphism. Indeed, for a nearby point $(a,c)\in\R^m\oplus\R^d$ on the unit sphere, its inverse is $z=-a/\norm c_2$ and $x=r_T^{-1}(c/\norm c_2)$. The unique preimage therefore has local degree $\pm1$, and $|\deg\widetilde F|=1$. Homotopy invariance gives $|\deg F_0|=1$, and rescaling the second sphere gives $|\deg F|=1$. The homotopies are used only to compute the degree; the map $F$ satisfying \eqref{eq:F-x}--\eqref{eq:F-z} is unchanged.
\end{proof}

The radius $R$ in \Cref{lem:product} may depend on $E$. To replace this sphere by a unit sphere while preserving the estimates, we use a Gaussian map with dimension-independent derivative bounds.

\begin{lemma}\label{lem:gaussian}
For every $k\ge1$ there are a real Hilbert space $\mathcal H$ and a smooth map $\phi:\R^k\to S_{\mathcal H}$ such that
\begin{equation}\label{eq:gaussian}
\ip{\phi(z),\phi(w)} =e^{-\norm{z-w}_2^2/2}, \qquad \norm{D\phi(z)v}_2=\norm v_2,
\end{equation}
and
\begin{equation}\label{eq:gaussian-second}
\norm{D^2\phi(z)(v,w)}_2^2 = \norm v_2^2\norm w_2^2 +2\ip{v,w}^2 \le3\norm v_2^2\norm w_2^2.
\end{equation}
Moreover, if $K\subset\R^k$ is compact and $\varepsilon>0$, there is a finite-rank orthogonal projection $P$ on $\mathcal H$ such that
\[
\sup_{z\in K}\norm{(\Id-P)\phi(z)}_2<\varepsilon \quad\text{and}\quad \sup_{\substack{z\in K\\ \norm v_2\le1}} \norm{(\Id-P)D\phi(z)v}_2<\varepsilon.
\]
\end{lemma}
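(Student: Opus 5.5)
The plan is to use the same Fock-space (Gaussian) construction as the map $J$ above, applied to $\R^k$. Take $\mathcal H=\bigl(\bigoplus_{n\ge0}(\R^k)^{\otimes n}\bigr)_{\ell_2}$ and
\[
\phi(z)=e^{-\norm z_2^2/2}\Bigl(1,z,\frac{z^{\otimes2}}{\sqrt{2!}},\ldots\Bigr).
\]
The identity $\ip{z^{\otimes n},w^{\otimes n}}=\ip{z,w}^n$ gives $\ip{\phi(z),\phi(w)}=e^{-\norm z_2^2/2-\norm w_2^2/2+\ip{z,w}}=e^{-\norm{z-w}_2^2/2}$. In particular, $\phi$ maps into $S_{\mathcal H}$. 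Smoothness follows because each component is a polynomial times a Gaussian, and the series of derivatives converges locally uniformly. One way to see this is that the component norms are dominated by $\norm z_2^{n}/\sqrt{n!}$ times polynomial factors in $n$.

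For the derivative identities, I would avoid explicit componentwise computation. Instead, I would differentiate the kernel identity. Write $k(z,w)=\ip{\phi(z),\phi(w)}=e^{-\norm{z-w}_2^2/2}$. Then
\[
\ip{D\phi(z)v,D\phi(w)v'}=\partial_{z,v}\partial_{w,v'}k(z,w).
\]
Evaluating at $w=z$ gives $\norm{D\phi(z)v}_2^2=-\partial_v^2$ of $e^{-\norm u_2^2/2}$ at $u=0$, which equals $\norm v_2^2$. Similarly, $\ip{D^2\phi(z)(v,w),D^2\phi(z)(v,w)}$ equals the fourth mixed derivative $\partial_v\partial_w\partial_v\partial_w$ of $g(u)=e^{-\norm u_2^2/2}$ at $u=0$, up to the sign $(-1)^2=1$. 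Since $\partial_{z}\partial_{z}$ paired with $\partial_w\partial_w$ corresponds to $u=z-w$, the sign works out. Expanding $g$ to fourth order, the quartic term is $\norm u_2^4/8$, so the coefficient extraction gives $\ip{v,v}\ip{w,w}+2\ip{v,w}^2$. The bound by $3\norm v_2^2\norm w_2^2$ is then Cauchy--Schwarz. The one care point is justifying that differentiation under the inner product is legitimate. This follows from the locally uniform convergence of the series and its derivatives mentioned above.

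For the approximation statement, let $P_N$ be the orthogonal projection onto $\bigoplus_{n\le N}(\R^k)^{\otimes n}$. This has finite rank because $k<\infty$. The tails of $\phi(z)$ are bounded by $\sum_{n>N}\norm z_2^{2n}/n!$. The derivative $D\phi(z)v$ has $n$th component
\[
e^{-\norm z_2^2/2}\bigl(-\ip{z,v}z^{\otimes n}+\textstyle\sum z\otimes\cdots\otimes v\otimes\cdots\otimes z\bigr)/\sqrt{n!},
\]
whose norm is at most $(\norm z_2+n)\norm z_2^{n-1}/\sqrt{n!}$ times $\norm v_2(1+\norm z_2)$. Both tails tend to $0$ uniformly for $z$ in the compact set $K$ and $\norm v_2\le1$, so a large enough $N$ gives the bound $\varepsilon$.

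I expect the main obstacle to be bookkeeping rather than any conceptual difficulty. The fourth-derivative coefficient extraction in \eqref{eq:gaussian-second} needs care: the cross terms must give exactly $2\ip{v,w}^2$, which comes from the polarization of $\norm u_2^4=\ip{u,u}^2$.
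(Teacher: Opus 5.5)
Your proof is correct, but it takes a different route from the paper.

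\textbf{The paper's route.} It realizes the Gaussian kernel with random Fourier features: $\mathcal H=L_2(\gamma_k;\R^2)$ and $\phi(z)(u)=(\cos\ip{u,z},\sin\ip{u,z})$. The kernel identity is then the Gaussian characteristic function. The derivatives are computed explicitly as
\[
D\phi(z)v(u)=\ip{u,v}\bigl(-\sin\ip{u,z},\cos\ip{u,z}\bigr),\qquad D^2\phi(z)(v,w)(u)=-\ip{u,v}\ip{u,w}\phi(z)(u),
\]
so both norm identities reduce to second- and fourth-moment (Isserlis) computations, with dominated convergence justifying the differentiation. For the projection the paper argues softly: the sets $\{\phi(z):z\in K\}$ and $\{D\phi(z)v:z\in K,\ \norm v_2\le1\}$ are compact, and it projects onto the span of a finite $\varepsilon$-net.

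\textbf{Your route.} You reuse the Fock-space map $J$ from Section~\ref{subsec:hilbertian}, restricted to $\R^k$. You read off both identities from the fourth-order Taylor coefficient of $e^{-\norm u_2^2/2}$, which only requires $\phi$ to be $C^2$. You obtain the projections as explicit truncations $P_N$ in tensor degree, with quantitative tail bounds.

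\textbf{Comparison.}
\begin{itemize}
\item Your approach introduces no new model and makes clear that the identities depend only on the kernel, so any $C^2$ feature map of $e^{-\norm{z-w}_2^2/2}$ would do.
\item Your projections are canonical and nested, with explicit rank $\sum_{n\le N}k^n$.
\item The paper's approach gives one-line derivative formulas.
\item The paper's projection step needs no tail estimates. That compactness argument would work equally well in your model, so your tail bounds are optional.
\end{itemize}

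One cosmetic point: in the fourth-derivative step you use $w$ both for the second base point and for a direction vector. Rename one of them when you write it up.
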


\begin{proof}
Let $\gamma_k$ be the standard Gaussian measure on $\R^k$ and set
\[
\mathcal H=L_2(\gamma_k;\R^2).
\]
Define
\[
\phi(z)(u) = \bigl(\cos\ip{u,z},\sin\ip{u,z}\bigr).
\]
Then $\norm{\phi(z)}_2=1$, and the Gaussian characteristic function gives
\[
\ip{\phi(z),\phi(w)} = \int_{\R^k}\cos\ip{u,z-w}\,d\gamma_k(u) = e^{-\norm{z-w}_2^2/2}.
\]
Differentiation gives
\[
D\phi(z)v(u) = \ip{u,v} \bigl(-\sin\ip{u,z},\cos\ip{u,z}\bigr)
\]
and
\[
D^2\phi(z)(v,w)(u) = -\ip{u,v}\ip{u,w}\phi(z)(u).
\]
The Gaussian second- and fourth-moment identities now give \eqref{eq:gaussian} and \eqref{eq:gaussian-second}. The same estimates justify the differentiations by dominated convergence.

For compact $K$, the sets $\{\phi(z):z\in K\}$ and $\{D\phi(z)v:z\in K,\ \norm v_2\le1\}$ are compact in $\mathcal H$. Choose a finite $\varepsilon$-net of their union and let $P$ be the orthogonal projection onto its linear span. Since orthogonal projection gives the best approximation from its range, the two required estimates follow.
\end{proof}

The Gaussian map allows us to place $S_{\ell_2^{m+1}}(R)$ inside a finite-dimensional Euclidean sphere with a normal neighborhood whose width and inverse estimate are independent of $m$ and $R$.
\begin{lemma}\label{lem:tube}
There are absolute constants $r\in(0,1)$ and $D_0<\infty$ such that, for every $m\ge1$ and $R\ge1$, there are an integer $q\ge1$ and a smooth embedding
\[
\Theta: S_{\ell_2^{m+1}}(R)\times B_{\ell_2^q}(2r) \longrightarrow S_{\ell_2^{m+q+1}}
\]
with the following properties.

\begin{enumerate}[label=\textnormal{(\roman*)}]
\item
For all $z,w\in S_{\ell_2^{m+1}}(R)$ and $h,k\in B_{\ell_2^q}(2r)$,
\begin{equation}\label{eq:tube-inverse}
\norm{h-k}_2+\min\{\norm{z-w}_2,1\} \le D_0\norm{\Theta(z,h)-\Theta(w,k)}_2.
\end{equation}

\item
The set
\[
\Theta\bigl( S_{\ell_2^{m+1}}(R)\times \operatorname{int}B_{\ell_2^q}(2r) \bigr)
\]
is open in $S_{\ell_2^{m+q+1}}$, and
\[
\partial\Theta\bigl( S_{\ell_2^{m+1}}(R)\times B_{\ell_2^q}(r) \bigr) = \Theta\bigl( S_{\ell_2^{m+1}}(R)\times\partial B_{\ell_2^q}(r) \bigr),
\]
where the boundary is taken in $S_{\ell_2^{m+q+1}}$.

\item
For every $z\in S_{\ell_2^{m+1}}(R)$, the vectors $\partial\Theta/\partial h_j(z,0)$, $1\le j\le q$, form an orthonormal basis of the normal space in $S_{\ell_2^{m+q+1}}$ to $\Theta(S_{\ell_2^{m+1}}(R)\times\{0\})$ at $\Theta(z,0)$, and depend smoothly on $z$.
\end{enumerate}
\end{lemma}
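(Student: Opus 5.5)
The map $\Theta$ is built from the Gaussian map of \Cref{lem:gaussian}, compressed to finite dimensions, and then given a tubular-neighbourhood structure. Set $k=m+1$ and $M=S_{\ell_2^{m+1}}(R)$.

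\emph{Step 1: embed $M$ into a finite-dimensional sphere.} Let $\phi:\R^{k}\to S_{\mathcal H}$ be the map of \Cref{lem:gaussian}. By \eqref{eq:gaussian},
\[
\norm{\phi(z)-\phi(w)}_2^2=2\bigl(1-e^{-\norm{z-w}_2^2/2}\bigr),
\]
and this is comparable to $\min\{\norm{z-w}_2,1\}^2$ with absolute constants. Apply the compactness clause of \Cref{lem:gaussian} on $K=M$ with a small absolute $\varepsilon$, and obtain a finite-rank projection $P$. Put $\psi(z)=P\phi(z)/\norm{P\phi(z)}_2$. This map takes values in the unit sphere of $\ran P$, which we identify with $S_{\ell_2^{N}}$.

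Derivative control is independent of $m$ and $R$. $D\psi$ is $\varepsilon$-close to the isometry $D\phi$, and $D^2\psi$ is bounded by an absolute constant by \eqref{eq:gaussian-second}. Hence $\psi$ is an immersion whose second fundamental form is bounded by an absolute constant $C$.

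The two-point estimate
\[
\norm{\psi(z)-\psi(w)}_2\ge c\min\{\norm{z-w}_2,1\}
\]
also survives the compression. To make sure of this, enlarge the ambient dimension by adding a few more directions to $\ran P$ if needed; increasing $q$ is harmless. Then let $q$ be the ambient dimension minus $m+1$, since $\dim M=m$ and the sphere $S_{\ell_2^{m+q+1}}$ has dimension $m+q$.

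\emph{Step 2: normal frame and the map $\Theta$.} Item (iii) asks for a smooth orthonormal normal frame $\nu_1(z),\dots,\nu_q(z)$ of $\psi(M)$ in the sphere. A global frame need not exist on $M$ unless the normal bundle is trivial. I would get triviality by taking the frame from a fixed ambient complement: the normal bundle of $M\subset\R^{m+1}$ is trivial, and adding extra trivial Euclidean directions keeps it stably trivial. Concretely, first embed $\psi(M)$ in a larger sphere that contains a fixed orthonormal family, then Gram--Schmidt the projections of that family onto the normal spaces. This is where the freedom to choose $q$ is used.

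Define
\[
\Theta(z,h)=\frac{\psi(z)+\sum_j h_j\nu_j(z)}{\norm{\psi(z)+\sum_j h_j\nu_j(z)}_2}.
\]
This is the normal exponential map up to a reparametrization. Then $\partial\Theta/\partial h_j(z,0)=\nu_j(z)$, which gives (iii).

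\emph{Step 3: tube estimate and openness.} The main obstacle is \eqref{eq:tube-inverse} with absolute $r$ and $D_0$, i.e.\ a uniform normal-injectivity radius. I split into two cases.
\begin{itemize}
\item If $\norm{z-w}_2\ge\delta$ for a small absolute $\delta$, then $\norm{\psi(z)-\psi(w)}_2\ge c\min\{\delta,1\}$. This beats the displacements $\le 2r$ once $r\ll c\delta$.
\item If $\norm{z-w}_2<\delta$, linearize. $D\Theta(z,0)$ is an isometry from $T_zM\oplus\R^q$ onto the tangent space of the sphere. The bounds on $D^2\psi$ and on $D\nu$ (controlled through the second fundamental form) give $D\Theta$ within $1/2$ of that isometry on the $\delta$-by-$2r$ neighbourhood. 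A standard mean-value argument then yields the lower bound.
\end{itemize}
Together the two cases give \eqref{eq:tube-inverse}, which also shows that $\Theta$ is an injective immersion.

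For (ii), note that $\dim(M\times B_{\ell_2^q})=m+q$ equals the dimension of the sphere. By invariance of domain, $\Theta$ maps the interior onto an open set, and the topological boundary statement follows by compactness.
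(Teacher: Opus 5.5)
\textbf{Step 2 has a genuine gap, and it is the heart of the lemma.} Your Steps 1 and 3 follow the paper's outline: you compress and normalize the Gaussian map (your $\psi$ is the paper's $\Phi$), then split into near and far cases. This works provided the local scale $\delta$ is fixed from the absolute $C^2$ bounds before the compression error $\varepsilon$ is chosen, and $r$ is chosen last.

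Write $\mathcal N_zh=\sum_jh_j\nu_j(z)$. Estimate \eqref{eq:tube-inverse} with an \emph{absolute} $D_0$ forces $z\mapsto\mathcal N_z$ to be Lipschitz with an absolute constant. To see this, suppose $\norm{z-w}_2=\epsilon$ and $\mathcal N_w\approx\mathcal N_zO$ for an orthogonal $O$ with $\norm{O-\Id}$ of order one. Then $k=O^{-1}h$ gives $\norm{\Theta(z,h)-\Theta(w,k)}_2=O(\epsilon)$, while $\norm{h-k}_2$ is of order $r$. Your proposal does not supply such a frame, for two reasons.
\begin{enumerate}
\item The claim that $D\nu_j$ is controlled through the second fundamental form is incorrect. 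Only the tangential part of $D\nu_j$ is the shape operator. The normal part is the connection form of the chosen frame, which is arbitrary: compose any frame with a rapidly rotating $O(q)$-valued function.
\item Gram--Schmidt applied to the normal projections of a fixed orthonormal family spanning $F$ needs $F\cap W_z=\{0\}$ for every $z$. Here $W_z=\operatorname{span}\{\psi(z),D\psi(z)(T_zM)\}$ is the orthogonal complement of the normal space. Since $W_z$ lies in the original ambient space, appending orthogonal directions does not change this condition, so the extra directions do nothing in your construction. Moreover, the only available Lipschitz bound for the Gram--Schmidt frame degenerates with the transversality of $F$ to the $W_z$. That transversality cannot be uniform in $R$. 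By \eqref{eq:gaussian}, the spaces $\operatorname{span}\{\phi(z),D\phi(z)(T_zM)\}$ at the roughly $R^m$ points of a $10$-separated subset of $M$ are almost orthogonal. A fixed norm-one linear map onto $\R^{m+1}$, composed with $P$, can be uniformly injective on only boundedly many almost orthogonal $(m+1)$-dimensional subspaces.
\end{enumerate}

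\textbf{The missing idea.} Use the $m+1$ extra directions not as a fixed family to project, but to absorb the radial-plus-tangent component, via an explicit, uniformly Lipschitz isometry of $\R^{m+1}$ onto $W_z$. The paper does this as follows.
\begin{enumerate}
\item Set $u=z/R$, and let $P_u$ be the orthogonal projection onto $u^\perp$. Define the operator $T_za=\ip{a,u}\Phi(z)+D\Phi(z)P_ua$ and put $V_z=T_z(T_z^*T_z)^{-1/2}$.
\item Then $V_z$ is an isometry onto $\operatorname{span}\{\Phi(z),D\Phi(z)(u^\perp)\}$ with $\norm{V_z-V_w}\le C\norm{z-w}_2$. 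This uses $R\ge1$, the absolute bounds on $D\Phi$ and $D^2\Phi$, and the resolvent integral for $A^{-1/2}$.
\item In $S_{H_0\oplus\R^{m+1}}$, with $i(z)=(\Phi(z),0)$, the map $\mathcal N_zh=\bigl((\Id-V_zV_z^*)h,V_z^*h\bigr)$ is an isometry of $H_0$ onto the normal space. It satisfies $\norm{\mathcal N_z-\mathcal N_w}\le B_1\norm{z-w}_2$.
\end{enumerate}
This is the concrete form of your remark that the normal bundle becomes trivial after adding $m+1$ trivial directions. It rotates the $W_z$-component of $h$ into the new coordinates rather than projecting it away. This Lipschitz bound is exactly what makes the near-case error terms in your Step 3 absolute.
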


\begin{proof}
Apply \Cref{lem:gaussian} with $k=m+1$. Let $\eta>0$ be chosen later. The finite-rank approximation in that lemma gives an orthogonal projection $P:\mathcal H\to\mathcal H$ of finite rank such that
\begin{equation}\label{eq:projection-C1}
\begin{split}
\sup_{\norm{z}_2\le R+1} \norm{(\Id-P)\phi(z)}_2&\le\eta,\\
\sup_{\substack{\norm{z}_2\le R+1\\ \norm{v}_2\le1}} \norm{(\Id-P)D\phi(z)v}_2&\le\eta.
\end{split}
\end{equation}
Let $H_0=\ran P$, put $q=\dim H_0$, and define
\[
\Phi(z)=\frac{P\phi(z)}{\norm{P\phi(z)}_2}, \qquad z\in B_{\ell_2^{m+1}}(R+1).
\]
For $\eta$ sufficiently small, $\norm{P\phi(z)}_2\ge1-\eta$, and $\Phi$ is smooth on a neighborhood of this ball. Moreover,
\begin{equation}\label{eq:Phi-bounds}
\norm{\Phi(z)-\phi(z)}_2\le2\eta,\qquad \frac12\norm{v}_2\le\norm{D\Phi(z)v}_2\le2\norm{v}_2,\qquad \norm{D^2\Phi(z)}\le B,
\end{equation}
where $B$ is absolute. Indeed, the first two derivatives of $u\mapsto u/\norm{u}_2$ are uniformly bounded when $\norm{u}_2\ge1/2$. The first two estimates follow from \eqref{eq:projection-C1}; for the last one use $D^2(P\phi)=PD^2\phi$ and \eqref{eq:gaussian-second}.

Choose an absolute $s_0\in(0,1)$ sufficiently small. Taylor's formula and \eqref{eq:Phi-bounds} give
\begin{equation}\label{eq:Phi-local}
\norm{\Phi(z)-\Phi(w)}_2 \ge\frac14\norm{z-w}_2 \qquad \bigl(\norm{z-w}_2\le s_0\bigr)
\end{equation}
for $z,w\in S_{\ell_2^{m+1}}(R)$. If $\norm{z-w}_2\ge s_0$, then \eqref{eq:gaussian} yields
\[
\norm{\phi(z)-\phi(w)}_2^2 = 2\bigl(1-e^{-\norm{z-w}_2^2/2}\bigr) \ge 2\bigl(1-e^{-s_0^2/2}\bigr).
\]
Taking $\eta$ sufficiently small in terms of $s_0$, we obtain an absolute constant $b_0>0$ such that
\begin{equation}\label{eq:Phi-far}
\norm{\Phi(z)-\Phi(w)}_2\ge b_0 \qquad \bigl(\norm{z-w}_2\ge s_0\bigr).
\end{equation}
The estimates in \eqref{eq:Phi-bounds} remain uniform as $\eta$ decreases, so any further smallness condition on $s_0$ depending only on those bounds may be imposed before choosing $\eta$.

We construct a smooth orthonormal frame for the normal spaces. For $z\in S_{\ell_2^{m+1}}(R)$, let
\[
u=\frac zR, \qquad P_u=\Id-u\otimes u,
\]
where $(u\otimes u)a=\ip{a,u}u$ for $a\in\R^{m+1}$. Thus $P_u$ is the orthogonal projection onto $u^\perp$. Define $T_z:\R^{m+1}\to H_0$ by
\begin{equation}\label{eq:Tz}
T_za=\ip{a,u}\Phi(z)+D\Phi(z)P_ua.
\end{equation}
Since $D\Phi(z)(u^\perp)\perp\Phi(z)$,
\[
\norm{T_za}_2^2 = \abs{\ip{a,u}}^2+\norm{D\Phi(z)P_ua}_2^2.
\]
Hence \eqref{eq:Phi-bounds} gives
\[
\norm a_2^2/4\le\norm{T_za}_2^2\le4\norm a_2^2.
\]
Thus the spectrum of $T_z^*T_z$ lies in $[1/4,4]$, and $(T_z^*T_z)u=u$. Set
\begin{equation}\label{eq:Vz}
V_z=T_z(T_z^*T_z)^{-1/2}.
\end{equation}
Then $V_z:\R^{m+1}\to H_0$ is an isometry, $V_zu=\Phi(z)$, and
\[
\ran V_z = \operatorname{span}\{\Phi(z),D\Phi(z)(u^\perp)\}.
\]
Since $R\ge1$, \eqref{eq:Phi-bounds} gives
\[
\norm{T_z-T_w}\le C\norm{z-w}_2
\]
for an absolute constant $C$. For positive operators $A,B$ with spectra in $[1/4,4]$, the formula
\[
A^{-1/2} = \frac1\pi \int_0^\infty t^{-1/2}(A+t\Id)^{-1}\,dt
\]
and the resolvent identity
\[
(A+t\Id)^{-1}-(B+t\Id)^{-1} = (A+t\Id)^{-1}(B-A)(B+t\Id)^{-1}
\]
give
\[
\norm{A^{-1/2}-B^{-1/2}}\le C\norm{A-B},
\]
with an absolute constant $C$. Consequently,
\[
\norm{V_z-V_w}\le C\norm{z-w}_2.
\]
The dependence of $V_z$ on $z$ is smooth as well.

Embed $S_{\ell_2^{m+1}}(R)$ into $S_{H_0\oplus\R^{m+1}}$ by $i(z)=(\Phi(z),0)$. The estimates \eqref{eq:Phi-bounds}--\eqref{eq:Phi-far} show that $i$ is a smooth embedding. For $h\in H_0$ define
\begin{equation}\label{eq:normal-frame}
\mathcal N_zh = \bigl((\Id-V_zV_z^*)h,V_z^*h\bigr).
\end{equation}
Since $V_z$ is an isometry,
\[
\norm{(\Id-V_zV_z^*)h}_2^2+\norm{V_z^*h}_2^2=\norm h_2^2,
\]
so $\mathcal N_z$ is an isometry. Its range is orthogonal to $i(z)$ and to $Di(z)(u^\perp)$, hence lies in the normal space to $i(S_{\ell_2^{m+1}}(R))$ in $S_{H_0\oplus\R^{m+1}}$. Both spaces have dimension $q$, so they coincide. Moreover,
\begin{equation}\label{eq:normal-lip}
\norm{\mathcal N_z-\mathcal N_w} \le B_1\norm{z-w}_2
\end{equation}
for an absolute constant $B_1$.

Define
\begin{equation}\label{eq:Theta}
\Theta(z,h) = \frac{i(z)+\mathcal N_zh}{\sqrt{1+\norm h_2^2}}.
\end{equation}
Because $\mathcal N_zh\perp i(z)$ and $\norm{\mathcal N_zh}_2=\norm h_2$, the image lies on the unit sphere. We choose an absolute $r\in(0,1/2]$ below and restrict $\Theta$ to $S_{\ell_2^{m+1}}(R)\times B_{H_0}(2r)$. After identifying $H_0$ isometrically with $\ell_2^q$, this has the domain and codomain in the statement.

We prove \eqref{eq:tube-inverse} when $s=\norm{z-w}_2\le s_0$. Let $u=z/R$ and put $v=P_u(w-z)$. Since $\norm z_2=\norm w_2=R$,
\[
\ip{u,w-z} = -\frac{s^2}{2R}, \qquad \norm v_2 = s\sqrt{1-\frac{s^2}{4R^2}} \ge s\sqrt{1-\frac{s^2}{4}}.
\]
Taylor's formula and \eqref{eq:Phi-bounds} give
\begin{equation}\label{eq:taylor-tangent}
i(w)-i(z)=Di(z)v+e, \qquad \norm e_2\le C_0s^2,
\end{equation}
where $C_0$ is absolute. Indeed, the component of $w-z$ in the $u$-direction has norm $s^2/(2R)$, and the Taylor remainder is bounded by an absolute multiple of $s^2$.

Put
\[
a(h)=(1+\norm h_2^2)^{-1/2}.
\]
Then
\begin{equation}\label{eq:tube-expansion}
\begin{split}
\Theta(w,k)-\Theta(z,h) ={}& a(h)\bigl(i(w)-i(z)+\mathcal N_z(k-h)\bigr)\\
&+ a(h)(\mathcal N_w-\mathcal N_z)k\\
&+ (a(k)-a(h))\bigl(i(w)+\mathcal N_wk\bigr).
\end{split}
\end{equation}
The vectors $Di(z)v$ and $\mathcal N_z(k-h)$ are orthogonal. Since $s_0\le1$ and $r\le1/2$, we have $\norm{Di(z)v}_2\ge s/4$, $\norm{\mathcal N_z(k-h)}_2=\norm{k-h}_2$, and $a(h)\ge1/\sqrt2$. Hence
\[
a(h)\norm{Di(z)v+\mathcal N_z(k-h)}_2 \ge \frac{s+\norm{k-h}_2}{8}.
\]
The scalar function $a$ satisfies
\[
\abs{a(k)-a(h)}\le2r\norm{k-h}_2,
\]
and \eqref{eq:normal-lip} bounds the variation of $\mathcal N_z$. Thus the remaining terms in \eqref{eq:tube-expansion}, including the error in \eqref{eq:taylor-tangent}, have total norm at most
\begin{equation}\label{eq:tube-error}
C_2\bigl(s^2+rs+r\norm{k-h}_2\bigr),
\end{equation}
where $C_2$ is absolute.

Choose $s_0$ sufficiently small and then $r$ sufficiently small so that the quantity in \eqref{eq:tube-error} is at most $(s+\norm{k-h}_2)/16$. We obtain
\begin{equation}\label{eq:tube-near}
s+\norm{k-h}_2 \le 16\norm{\Theta(z,h)-\Theta(w,k)}_2 \qquad (s\le s_0).
\end{equation}
As noted above, $s_0$ is chosen before $\eta$ in \eqref{eq:Phi-far}.

Now suppose $s\ge s_0$. From \eqref{eq:Theta},
\[
\norm{\Theta(z,h)-i(z)}_2\le2\norm h_2\le4r,
\]
and the same estimate holds for $(w,k)$. Hence \eqref{eq:Phi-far} gives
\[
\norm{\Theta(z,h)-\Theta(w,k)}_2\ge b_0-8r.
\]
Decrease $r$ so that $8r\le b_0/2$. Since $\norm{h-k}_2\le4r$ and $\min\{s,1\}\le1$, we obtain \eqref{eq:tube-inverse} also in this case, with
\[
D_0 = \max\left\{ 16,\frac{2(1+4r)}{b_0} \right\}.
\]
All constants are independent of $m$ and $R$.

The estimate \eqref{eq:tube-inverse} implies that $\Theta$ is injective on $S_{\ell_2^{m+1}}(R)\times B_{H_0}(2r)$. The local estimate \eqref{eq:tube-near} also implies that its differential is injective. Since the domain is compact, the restriction is a smooth embedding.

The manifold $S_{\ell_2^{m+1}}(R)\times\operatorname{int}B_{H_0}(2r)$ and the sphere $S_{H_0\oplus\R^{m+1}}$ have the same dimension. By invariance of domain,
\[
U= \Theta\bigl( S_{\ell_2^{m+1}}(R)\times \operatorname{int}B_{H_0}(2r) \bigr)
\]
is open in that sphere. The restriction of $\Theta$ is a homeomorphism onto $U$, so the boundary in $U$ of $\Theta(S_{\ell_2^{m+1}}(R)\times B_{H_0}(r))$ is
\[
\Theta\bigl( S_{\ell_2^{m+1}}(R)\times\partial B_{H_0}(r) \bigr).
\]
Since $\Theta(S_{\ell_2^{m+1}}(R)\times B_{H_0}(r))$ is compact and contained in $U$, its boundary in $U$ agrees with its boundary in $S_{H_0\oplus\R^{m+1}}$. This proves \textnormal{(ii)}.

Finally, let $D_h\Theta$ denote the derivative in the second variable. Differentiating \eqref{eq:Theta} at $h=0$ gives
\[
D_h\Theta(z,0)\xi=\mathcal N_z\xi.
\]
Under the fixed identification $H_0=\ell_2^q$, let $(e_j)_{j=1}^q$ be the standard orthonormal basis. Then
\[
\partial\Theta/\partial h_j(z,0)=\mathcal N_ze_j.
\]
Since $\mathcal N_z$ is an isometry onto the normal space, these vectors form an orthonormal basis of that space and depend smoothly on $z$. This proves \textnormal{(iii)}.
\end{proof}

We next construct a map that equals $p$ when $\norm h_2=r$ or $v_{a+1}\le\sqrt3/2$ and has relative degree of absolute value one. This map will be used with the parametrization in \Cref{lem:tube}.

\begin{lemma}\label{lem:collapse}
Let $r$ be the absolute constant in \Cref{lem:tube}. For integers $a,q\ge1$, put $p=(0,-1)\in S_{\ell_2^{a+q+1}}$. There is a continuous map
\[
\mathcal C: S_{\ell_2^{a+1}}\times B_{\ell_2^q}(r) \longrightarrow S_{\ell_2^{a+q+1}}
\]
with the following properties.

\begin{enumerate}[label=\textnormal{(\roman*)}]
\item
There is an absolute constant $L$ such that
\[
\norm{\mathcal C(v,h)-\mathcal C(\widetilde v,\widetilde h)}_2 \le L\bigl( \norm{v-\widetilde v}_2+\norm{h-\widetilde h}_2 \bigr)
\]
for all points in the domain.

\item
We have $\mathcal C(v,h)=p$ whenever $v_{a+1}\le\sqrt3/2$ or $\norm h_2=r$.

\item
The restriction of $\mathcal C$ to
\[
\bigl\{v\in S_{\ell_2^{a+1}}:v_{a+1}>\sqrt3/2\bigr\} \times\operatorname{int}B_{\ell_2^q}(r)
\]
is a homeomorphism onto $S_{\ell_2^{a+q+1}}\setminus\{p\}$. In particular, its relative degree has absolute value one.
\end{enumerate}
\end{lemma}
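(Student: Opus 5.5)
We build $\mathcal C$ as a collapse map: the open cell
\[
U=\{v\in S_{\ell_2^{a+1}}:v_{a+1}>\sqrt3/2\}\times\operatorname{int}B_{\ell_2^q}(r)
\]
is identified with $\R^{a+q}$, and then with $S_{\ell_2^{a+q+1}}\setminus\{p\}$ by inverse stereographic projection from $p$. The whole difficulty is to do this with a Lipschitz constant that does not depend on $a$ or $q$.

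\textbf{Step 1: parametrize the cap.} For $v\in S_{\ell_2^{a+1}}$ write $v=(v',v_{a+1})$ with $v'\in\R^a$. On the closed cap $v_{a+1}\ge\sqrt3/2$ we have $\norm{v'}_2\le1/2$. The projection $v\mapsto v'$ is $1$-Lipschitz, and its inverse $v'\mapsto(v',\sqrt{1-\norm{v'}_2^2})$ is Lipschitz with an absolute constant on $B_{\ell_2^a}(1/2)$. This identifies the cap with the ball $B_{\ell_2^a}(1/2)$. Rescaling $v'$ by $2$ and $h$ by $1/r$ then identifies the closed product with $B_{\ell_2^a}\times B_{\ell_2^q}$, with absolute bi-Lipschitz constants.

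\textbf{Step 2: a Lipschitz radial coordinate.} Set $y=(2v',h/r)\in\R^{a+q}$ and
\[
\mu(v,h)=\max\{\norm{2v'}_2,\ \norm h_2/r\}.
\]
This is $1$-Lipschitz up to the absolute factor $\max\{2,1/r\}$. The boundary of the product corresponds exactly to $\mu=1$. When $v_{a+1}\le\sqrt3/2$, we have $\norm{2v'}_2\ge1$, so we simply set $\mu$ to be $1$ or larger there; it is cleanest to define $\mu$ via $\norm{2v'}_2$ computed from the actual $v'$, which is already at least $1$ off the open cap.

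\textbf{Step 3: define $\mathcal C$.} Define
\[
\mathcal C(v,h)=\sigma\!\left(\frac{(1-\mu)_+\,y/\norm y_2}{\ }\cdot\ \right)
\]
in the following concrete form. Let $\rho=\tan(\pi(1-\mu)_+/2)$ be the stereographic radius, and set
\[
\mathcal C(v,h)=\Bigl(\sin(\pi(1-\mu)_+)\,\frac{y}{\norm y_2},\ -\cos(\pi(1-\mu)_+)\Bigr).
\]
Here $\pi(1-\mu)_+$ is the spherical distance from $p$ and $y/\norm y_2$ is the direction; points with $\mu\ge1$ go to $p$. This gives (ii) immediately. For (iii), note that on the open cell the map $y\mapsto$ (direction $y/\norm y_2$, value $\mu(y)\in[0,1)$) is a homeomorphism onto the open cone, because $\mu$ is a norm on $\R^{a+q}$. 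The map (direction, polar angle $\pi(1-\mu)\in(0,\pi]$) to $S\setminus\{p\}$ is also a homeomorphism, with $y=0$ going to the antipode $(0,1)$. Hence $\mathcal C$ restricted to the open cell is a homeomorphism onto $S_{\ell_2^{a+q+1}}\setminus\{p\}$, and the relative degree is $\pm1$.

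\textbf{Step 4: the Lipschitz estimate (main obstacle).} The direction $y/\norm y_2$ is not Lipschitz near $y=0$, but there the factor $\sin(\pi(1-\mu))$ is near $0$ only at $\mu=1$, not at $\mu=0$. So the formula as written is bad near $y=0$. To fix this I would use the polar-angle form $\mathcal C(v,h)=\exp_{p}$ of the vector $\pi(1-\mu)_+\,y/\norm y_2$, i.e.\ the exponential map at the north pole $(0,1)$ applied to $\pi\mu\,y/\norm y_2$:
\[
\mathcal C=\Bigl(\sin(\pi\mu)\,\tfrac{y}{\norm y_2},\ \cos(\pi\mu)\Bigr)\quad(\mu\le1),\qquad \mathcal C=p\quad(\mu\ge1).
\]
Then $\mu=0$ goes to $(0,1)$ and $\mu=1$ goes to $p$, and the image agrees with $p$ on $\mu=1$, so the pieces paste continuously. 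The key estimate is that $w\mapsto(\sin(\pi\mu(w))w/\norm w_2,\cos(\pi\mu(w)))$ is Lipschitz with an absolute constant. We write it as $G(\mu(w)\,w/\norm w_2)$, where $G(x)=(\sin(\pi\norm x_2)x/\norm x_2,\cos(\pi\norm x_2))$ is the exponential map, which is $\pi$-Lipschitz on every $\R^N$ uniformly in $N$. The map $w\mapsto\mu(w)w/\norm w_2$ is the radial rescaling between the $\mu$-ball and the Euclidean ball. Its Lipschitz constant is bounded by a constant depending only on the ratio between $\mu$ and $\norm\cdot_2$, which lies between absolute bounds ($\norm y_2\le\sqrt2\,\mu\le\sqrt2\max\{1,\ldots\}\norm y_2$), independent of $a$ and $q$. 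This uses the estimate \eqref{eq:radial}-type argument: for $w,\widetilde w$, compare directions and radii separately, with $\norm{w}_2\asymp\mu(w)$. Composing with Steps 1--2 gives (i) with an absolute $L$.
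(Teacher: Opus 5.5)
Your Step~4 formula is exactly the paper's map. Your factorization through the $\pi$-Lipschitz exponential map $G$ and the radial rescaling $T(y)=\mu(y)y/\norm{y}_2$ is a tidy repackaging of the paper's radial estimate on the cap; $T$ is $3$-Lipschitz by \eqref{eq:radial} and $\mu\le\norm{\cdot}_2$. Your worry that the Step~3 formula is bad near $y=0$ is unfounded: $\sin(\pi(1-\mu))=\sin(\pi\mu)$ and $-\cos(\pi(1-\mu))=\cos(\pi\mu)$, so the two formulas coincide.

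The genuine problem is the complement of the cap. Your Step~2 assertion that $\norm{2v'}_2\ge1$ whenever $v_{a+1}\le\sqrt3/2$ is false. Since $\norm{v'}_2^2=1-v_{a+1}^2$, it holds only when $\abs{v_{a+1}}\le\sqrt3/2$. Suppose you compute $\mu$ from the actual $v'$ everywhere, as you call cleanest. At $v=(0,-1)$, $h=0$ you get $\mu=0$ and $\mathcal C(v,h)=(0,1)\ne p$, so (ii) fails. Worse, the southern cap $\{v_{a+1}<-\sqrt3/2\}$ then gives a second sheet over $S_{\ell_2^{a+q+1}}\setminus\{p\}$. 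The reflection $v_{a+1}\mapsto-v_{a+1}$ fixes $v'$ but reverses the orientation of $S_{\ell_2^{a+1}}$, so the two sheets contribute opposite local degrees. The relative degree becomes $0$, which is exactly what \Cref{lem:radius} cannot tolerate.

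So you must set $\mathcal C=p$ on $\{v_{a+1}\le\sqrt3/2\}$ by definition. Then (i) needs an estimate for pairs straddling the cap boundary, which your argument does not supply. Step~1 identifies only the closed cap with a ball, and the $G\circ T$ bound only compares points with $\mu\le1$.

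The paper closes this gap as follows. If $v_{a+1}\ge\sqrt3/2>\widetilde v_{a+1}$, let $b$ be the first point with $b_{a+1}=\sqrt3/2$ on a shortest great-circle arc from $v$ to $\widetilde v$. Then $\norm{v-b}_2\le(\pi/2)\norm{v-\widetilde v}_2$ and $\mathcal C(b,\widetilde h)=p=\mathcal C(\widetilde v,\widetilde h)$. The in-cap estimate applied to $(v,h)$ and $(b,\widetilde h)$ therefore yields (i) at the cost of a factor $\pi/2$. If both points lie off the cap, both images equal $p$. With this repair your argument becomes essentially the paper's proof.
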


\begin{proof}
Write $v=(v',v_{a+1})$, with $v'\in\R^a$. If $v_{a+1}\ge\sqrt3/2$, then
$v=(v',\sqrt{1-\norm{v'}_2^2})$ and $\norm{v'}_2\le1/2$. For such $v$ and $h\in B_{\ell_2^q}(r)$, put
\[
w=(2v',h/r)\in B_{\ell_2^a}\times B_{\ell_2^q},
\qquad
\rho(w)=\max\{2\norm{v'}_2,\norm h_2/r\}.
\]
For $w\ne0$, define
\begin{equation}\label{eq:collapse}
\mathcal C(v,h)=
\left(
\sin(\pi\rho(w))\frac{w}{\norm w_2},
\cos(\pi\rho(w))
\right),
\end{equation}
and set $\mathcal C(v,h)=(0,1)$ when $w=0$. If $v_{a+1}<\sqrt3/2$, set $\mathcal C(v,h)=p$.

If $v_{a+1}=\sqrt3/2$, then $2\norm{v'}_2=1$, and if $\norm h_2=r$, then $\norm h_2/r=1$. Thus $\rho(w)=1$ in either case, so \eqref{eq:collapse} gives $\mathcal C(v,h)=p$. The two definitions therefore agree when $v_{a+1}=\sqrt3/2$. Since $\rho(w)\le\norm w_2$, continuity at $w=0$ is immediate. This proves continuity and \textnormal{(ii)}.

We prove \textnormal{(i)}. Write $w=(x,y)\in B_{\ell_2^a}\times B_{\ell_2^q}$. Then $\rho(w)=\max\{\norm x_2,\norm y_2\}$ is $1$-Lipschitz and $\rho(w)\le\norm w_2$. If $\norm w_2\ge\norm{\widetilde w}_2>0$, the radial estimate gives
\begin{align*}
&\left\|
\sin(\pi\rho(w))\frac{w}{\norm w_2}
-
\sin(\pi\rho(\widetilde w))
\frac{\widetilde w}{\norm{\widetilde w}_2}
\right\|_2\\
&\qquad\le
\pi\norm{w-\widetilde w}_2
+
\pi\norm{\widetilde w}_2
\frac{2\norm{w-\widetilde w}_2}{\norm w_2}
\le
3\pi\norm{w-\widetilde w}_2.
\end{align*}
The same estimate holds if one of the two vectors is zero. Since $w\mapsto\cos(\pi\rho(w))$ is $\pi$-Lipschitz, \eqref{eq:collapse} is $4\pi$-Lipschitz in $w$. Hence, whenever $v_{a+1},\widetilde v_{a+1}\ge\sqrt3/2$,
\[
\norm{\mathcal C(v,h)-\mathcal C(\widetilde v,\widetilde h)}_2
\le
8\pi\norm{v-\widetilde v}_2
+\frac{4\pi}{r}\norm{h-\widetilde h}_2.
\]

Suppose now that $v_{a+1}\ge\sqrt3/2>\widetilde v_{a+1}$. Let $b$ be the first point on a shortest spherical arc from $v$ to $\widetilde v$ such that $b_{a+1}=\sqrt3/2$. Then
$\norm{v-b}_2\le(\pi/2)\norm{v-\widetilde v}_2$, while
$\mathcal C(b,\widetilde h)=\mathcal C(\widetilde v,\widetilde h)=p$. Applying the preceding estimate to $(v,h)$ and $(b,\widetilde h)$ gives
\[
\norm{\mathcal C(v,h)-\mathcal C(\widetilde v,\widetilde h)}_2
\le
4\pi^2\norm{v-\widetilde v}_2
+\frac{4\pi}{r}\norm{h-\widetilde h}_2.
\]
If $v_{a+1},\widetilde v_{a+1}<\sqrt3/2$, both images equal $p$. Thus \textnormal{(i)} holds with
$L=\max\{4\pi^2,4\pi/r\}$.

We prove \textnormal{(iii)}. Let $(\zeta,t)\in S_{\ell_2^{a+q+1}}\setminus\{p\}$, with $\zeta\in\R^{a+q}$. Put $\rho=\pi^{-1}\arccos t\in[0,1)$. If $\rho=0$, then $(\zeta,t)=(0,1)$ and its unique preimage has $w=0$.

Assume $\rho>0$ and write
\[
\frac{\zeta}{\norm\zeta_2}=(\xi_1,\xi_2)\in\R^a\oplus\R^q.
\]
Equation \eqref{eq:collapse} forces
\[
w=
\frac{\rho}{\max\{\norm{\xi_1}_2,\norm{\xi_2}_2\}}
(\xi_1,\xi_2).
\]
If $w=(x,y)$, then
$v'=x/2$, $v_{a+1}=\sqrt{1-\norm{v'}_2^2}$, and $h=ry$. Since $\rho<1$, we have $\norm x_2<1$ and $\norm y_2<1$, so $v_{a+1}>\sqrt3/2$ and $\norm h_2<r$. Hence every point of $S_{\ell_2^{a+q+1}}\setminus\{p\}$ has a unique preimage in
\[
\bigl\{v\in S_{\ell_2^{a+1}}:v_{a+1}>\sqrt3/2\bigr\}
\times\operatorname{int}B_{\ell_2^q}(r).
\]
The inverse is continuous away from $(0,1)$. Since $\norm w_2\le\sqrt2\,\rho$, it is also continuous at $(0,1)$. Thus the restriction in \textnormal{(iii)} is a homeomorphism onto $S_{\ell_2^{a+q+1}}\setminus\{p\}$.

By \textnormal{(ii)}, the boundary is mapped to $p$. Therefore the relative degree may be computed at any point different from $p$. Such a point has exactly one preimage, with local degree $\pm1$, so the relative degree has absolute value one.

We finally record the smoothness needed below. If $v_{a+1}>\sqrt3/2$, $0<\norm{v'}_2<1/2$, and $h=0$, then $\rho=2\norm{v'}_2$ in a neighborhood of $(v,0)$. Hence \eqref{eq:collapse} is smooth there. Near the corresponding image,
$\norm{\xi_1}_2>\norm{\xi_2}_2$ and $\norm{\xi_1}_2>0$, so the inverse is smooth there as well. Thus $\mathcal C$ is a smooth local diffeomorphism at every such point.
\end{proof}

The preceding lemmas allow us to remove the radius $R$ from the Euclidean factor. We use the map $\Theta(z,h)$ from \Cref{lem:tube} to place $S_{\ell_2^{m+1}}(R)\times B_{\ell_2^q}(r)$ in a unit sphere, and then apply the map from \Cref{lem:collapse} to $(F(x,z),h)$.

\begin{lemma}\label{lem:radius}
There is an absolute constant $K<\infty$ with the following property. Let $E$ be a $d$-dimensional normed space, $d\ge2$, and suppose that
\[
F:S_E\times S_{\ell_2^{m+1}}(R) \longrightarrow S_{\ell_2^{d+m}}
\]
is continuous, has nonzero degree, and satisfies
\[
\norm{F(x,z)-F(y,z)}_2 \le \omega(\norm{x-y})
\]
and
\[
\norm{F(x,z)-F(x,w)}_2 \le \norm{z-w}_2
\]
for all $x,y\in S_E$ and $z,w\in S_{\ell_2^{m+1}}(R)$.

Then, for some $N\ge1$, there is a continuous map
\[
G:S_E\times S_{\ell_2^{N+1}} \longrightarrow S_{\ell_2^{d+N}}
\]
such that $|\deg G|=|\deg F|$ and
\begin{equation}\label{eq:G-estimates}
\begin{split}
\norm{G(x,s)-G(y,s)}_2 &\le K\omega(\norm{x-y}),\\
\norm{G(x,s)-G(x,t)}_2 &\le K\norm{s-t}_2.
\end{split}
\end{equation}
\end{lemma}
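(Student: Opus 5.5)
The plan is to put the sphere factor $S_{\ell_2^{m+1}}(R)$ into a unit sphere using the tube parametrization of \Cref{lem:tube}, and then collapse the complement of the tube with \Cref{lem:collapse}. Let $q$, $\Theta$ and $r$ be as in \Cref{lem:tube}, and set $N=m+q$, so that $\Theta$ lands in $S_{\ell_2^{N+1}}$. Put $T=\Theta\bigl(S_{\ell_2^{m+1}}(R)\times B_{\ell_2^q}(r)\bigr)$. Apply \Cref{lem:collapse} with $a=d+m-1$ and the same $q$; then $S_{\ell_2^{a+1}}=S_{\ell_2^{d+m}}$ and the target is $S_{\ell_2^{d+m+q}}=S_{\ell_2^{d+N}}$. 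Fix a rotation $U$ of $S_{\ell_2^{d+m}}$, to be chosen below. Define
\[
G(x,\Theta(z,h))=\mathcal C\bigl(UF(x,z),h\bigr)\quad\text{for }\Theta(z,h)\in T,\qquad G(x,s)=p\quad\text{for } s\notin T.
\]
By \Cref{lem:tube}(ii) the boundary of $T$ is $\Theta(\cdot\times\partial B(r))$. By \Cref{lem:collapse}(ii) the map $G$ equals $p$ there, so $G$ is continuous.

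The estimates \eqref{eq:G-estimates} come next. In $x$, we have $\norm{G(x,s)-G(y,s)}_2\le L\,\omega(\norm{x-y})$ by \Cref{lem:collapse}(i). In $s$, take $s=\Theta(z,h)$ and $t=\Theta(w,k)$ both in $T$. Then
\[
\norm{G(x,s)-G(x,t)}_2\le L\bigl(\norm{z-w}_2+\norm{h-k}_2\bigr).
\]
If $\norm{z-w}_2\le1$, \eqref{eq:tube-inverse} bounds this by $LD_0\norm{s-t}_2$. If $\norm{z-w}_2>1$, then $\norm{s-t}_2\ge1/D_0$, while the left side is at most $2$; so the bound $2D_0\norm{s-t}_2$ suffices. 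If $s\in T$ and $t\notin T$, follow the great-circle arc from $s$ to $t$ up to its first exit point $b\in\partial T$. At $b$ we have $G=p$, and the arc from $s$ to $b$ has length at most $(\pi/2)\norm{s-t}_2$. The estimate inside $T$ is applied there; this applies to $s,b$ because the tube bound is uniform. The result is $K=\max\{L,\,2\pi LD_0,\,2D_0\}$, which is absolute.

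The degree computation is the main obstacle. Choose a regular value $y\ne p$ of $G$ in the image of the smooth part of $\mathcal C$. Because $\mathcal C$ restricted to the cap region times the open ball is a homeomorphism onto the complement of $p$, the preimages of $y$ are exactly the points $(x,\Theta(z,h_0))$ with $UF(x,z)=c_0$, where $(c_0,h_0)=\mathcal C^{-1}(y)$. I choose $U$, or $y$, so that $c_0$ lies in the cap $v_{a+1}>\sqrt3/2$; this is automatic. Near such a point $G$ factors as the composition of the homeomorphism $\mathcal C$, which has local degree $\pm1$, with $(x,z,h)\mapsto(UF(x,z),h)$. The latter has local degree equal to the local degree of $F$ at $(x,z)$, with the same sign for every point, since orientations are fixed globally. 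Summing over preimages gives $\deg G=\pm\deg F$. To avoid smoothness of $F$, it is cleaner to phrase this as a map of pairs. $G$ factors through the quotient $S_E\times S^{N}\to S_E\times T/\partial T\cong S_E\times \bigl(S_{\ell_2^{m+1}}(R)\times(B/\partial B)\bigr)$, and the collapse map has degree one by \Cref{lem:tube}(ii). This quotient map is then followed by $(F\times\mathrm{id})$ smashed with the relative homeomorphism of \Cref{lem:collapse}(iii). Multiplicativity of degree under products and suspensions gives $|\deg G|=|\deg F|\cdot1\cdot1$. The careful point is checking that $\mathcal C\circ(UF\times\mathrm{id})$ descends to $S^{d+m}\wedge(B/\partial B)$ exactly as a suspension-type map of degree $\pm\deg F$. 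Via the homotopy from \Cref{lem:collapse}(iii), $\mathcal C$ is a relative homeomorphism, hence homotopic rel boundary to the standard quotient $S^{a+1}\times(B,\partial B)\to S^{a+q+1}$ collapsing everything outside the cap. The latter has degree $\pm1$ on the smash.
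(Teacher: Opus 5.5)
Your proposal is essentially the paper's proof: the same $G$ built from $\Theta$ and $\mathcal C$ with $a=d+m-1$, the same continuity argument, the same two estimates (your case split on $\norm{z-w}_2\le1$ and the first-exit point on $\partial T$ match the paper), and the same count of preimages of a point in the open cap; the rotation $U$ is superfluous. For the degree step, the paper makes your regular-value version rigorous by homotoping $F$ to a smooth map (the formula for $G$ carries the homotopy along because $\mathcal C(\cdot,h)=p$ when $\norm h_2=r$) and choosing a regular value $v$ of $F$ with $0<\norm{v'}_2<1/2$, so that $\mathcal C$ is a local diffeomorphism at $(v,0)$. In your map-of-pairs variant, note that $T/\partial T$ is $\bigl(S_{\ell_2^{m+1}}(R)\times B\bigr)/\bigl(S_{\ell_2^{m+1}}(R)\times\partial B\bigr)$, not $S_{\ell_2^{m+1}}(R)\times(B/\partial B)$, although the degree bookkeeping goes through once this is corrected.
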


\begin{proof}
Apply \Cref{lem:tube} and let $q$ be the resulting integer. Put $N=m+q$ and
\[
\mathcal T= \Theta\bigl( S_{\ell_2^{m+1}}(R)\times B_{\ell_2^q}(r) \bigr) \subset S_{\ell_2^{N+1}}.
\]
Let $\mathcal C$ be the map in \Cref{lem:collapse}, with $a=d+m-1$. For $s=\Theta(z,h)\in\mathcal T$, define
\begin{equation}\label{eq:G-definition}
G(x,s)=\mathcal C(F(x,z),h),
\end{equation}
and set $G(x,s)=p$ outside $\mathcal T$.

This is continuous. Indeed, \Cref{lem:tube} identifies $\partial\mathcal T$ with the points for which $\norm h_2=r$, and \Cref{lem:collapse} sends all such points to $p$.

The estimate in the first variable follows immediately from the Lipschitz bound for $\mathcal C$:
\[
\norm{G(x,s)-G(y,s)}_2 \le L\,\omega(\norm{x-y}).
\]
To control the second variable, let $s=\Theta(z,h)$ and $t=\Theta(w,k)$ belong to $\mathcal T$. Then
\[
\norm{G(x,s)-G(x,t)}_2 \le L\bigl( \norm{F(x,z)-F(x,w)}_2+\norm{h-k}_2 \bigr).
\]
Since the image of $F$ lies on a unit sphere,
\[
\norm{F(x,z)-F(x,w)}_2\le 2\min\{1,\norm{z-w}_2\}.
\]
Hence \eqref{eq:tube-inverse} gives
\[
\norm{G(x,s)-G(x,t)}_2 \le 3LD_0\norm{s-t}_2.
\]
If one point lies outside $\mathcal T$, join the two points by a shortest spherical arc and stop at the first point of $\partial\mathcal T$. Since $G$ equals $p$ there and outside $\mathcal T$, the same estimate extends across the boundary, up to the absolute factor $\pi/2$. If both points lie outside $\mathcal T$, the images coincide. This proves \eqref{eq:G-estimates}.

We finally compare the degrees. Identify $S_E$ with a Euclidean sphere by an orientation-preserving radial homeomorphism and, for this calculation only, replace $F$ by a smooth map in the same homotopy class. The definition \eqref{eq:G-definition} extends the homotopy because $\mathcal C(F(x,z),h)=p$ when $\norm h_2=r$.

Choose a regular value $v$ of $F$ in the interior of the cap from \Cref{lem:collapse}, with $v'\ne0$. Then $\mathcal C(v,0)$ has under $G$ precisely the preimages $(x,\Theta(z,0))$ for which $F(x,z)=v$. By \Cref{lem:tube}, $(z,h)\mapsto\Theta(z,h)$ is a smooth coordinate map near every $(z,0)$, and \Cref{lem:collapse} shows that $\mathcal C$ is a local diffeomorphism at $(v,0)$. Thus the local degree of $G$ is the local degree of $F$, multiplied by the same orientation sign at every preimage. Summing the local degrees yields $\deg G=\pm\deg F$.

Homotopy invariance gives the same conclusion for the original continuous map $F$.
\end{proof}

\subsection{From products to spheres}\label{subsec:join}

For a normed space $E$ and a Hilbert space $H$, the direct sum $E\oplus_2H$ carries the norm
\[
\norm{(e,h)}_{E\oplus_2H} =\bigl(\norm e^2+\norm h_2^2\bigr)^{1/2}.
\]
We now pass from the product of unit spheres to the sphere of $E\oplus_2\ell_2^{N+1}$, preserving the absolute degree and a dimension-independent modulus of continuity.

\begin{lemma}\label{lem:join}
Let $E$ be a $d$-dimensional normed space, $d\ge2$, and let $G:S_E\times S_{\ell_2^{N+1}}\to S_{\ell_2^{d+N}}$ be continuous with nonzero degree. Suppose that
\[
\norm{G(u,v)-G(\widetilde u,\widetilde v)}_2 \le \omega_1(\norm{u-\widetilde u}) + B_1\norm{v-\widetilde v}_2
\]
for some modulus $\omega_1$ and some $B_1\ge0$. Then there is a continuous map
\[
Q:S_{E\oplus_2\ell_2^{N+1}} \longrightarrow S_{\ell_2^{d+N+1}}
\]
such that $|\deg Q|=|\deg G|$ and
\begin{equation}\label{eq:join-modulus}
\norm{Q(\xi)-Q(\eta)}_2 \le 5\pi\omega_1(\norm{\xi-\eta}) + (4\pi+2\pi B_1)\norm{\xi-\eta}
\end{equation}
for all $\xi,\eta\in S_{E\oplus_2\ell_2^{N+1}}$.
\end{lemma}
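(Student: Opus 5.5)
We build $Q$ by the join construction. Every $\xi\in S_{E\oplus_2\ell_2^{N+1}}$ can be written as $\xi=(\cos\theta\,u,\sin\theta\,v)$ with $u\in S_E$, $v\in S_{\ell_2^{N+1}}$ and $\theta\in[0,\pi/2]$. Here $u$ is undetermined when $\theta=\pi/2$, and $v$ is undetermined when $\theta=0$. Topologically, $S_{E\oplus_2\ell_2^{N+1}}$ is therefore the join $S_E*S_{\ell_2^{N+1}}$, and this join is a suspension of $S_E\times S_{\ell_2^{N+1}}$ up to collapsing the two ends. Likewise $S_{\ell_2^{d+N+1}}$ is the unreduced suspension of $S_{\ell_2^{d+N}}$.

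We therefore look for a map that suspends $G$ in the middle range of $\theta$ and collapses to the two poles $\pm e$, where $e$ is the last basis vector. Concretely, fix a Lipschitz reparametrization $\sigma:[0,\pi/2]\to[-\pi/2,\pi/2]$ with $\sigma=-\pi/2$ on $[0,\pi/8]$, $\sigma=\pi/2$ on $[3\pi/8,\pi/2]$, and $\sigma$ linear in between. Define
\[
Q(\cos\theta\,u,\sin\theta\,v)=\bigl(\cos\sigma(\theta)\,G(u,v),\ \sin\sigma(\theta)\bigr).
\]
This is well defined, because near $\theta=0$ and near $\theta=\pi/2$ the factor $\cos\sigma(\theta)$ vanishes, so the undetermined coordinate plays no role. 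It is continuous and takes values in the unit sphere.

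\textbf{Degree.} We homotope $\sigma$ to the standard linear map $\theta\mapsto 2\theta-\pi/2$. Through this homotopy $Q$ stays well defined, since $\cos\sigma=0$ at the endpoints. Under the homeomorphisms of the join with the suspension of the product, and then with $S^{d+N+1}$, the resulting map is the suspension of $G$ composed with the quotient. Suspension preserves degree, and the orientation conventions can only contribute a sign, so $|\deg Q|=|\deg G|$. Alternatively, compute with a regular value $(\cos\sigma_0\,y,\sin\sigma_0)$, where $y$ is a regular value of $G$ and $\sigma_0\in(-\pi/2,\pi/2)$. Its preimages are exactly $\{(\cos\theta_0\,u,\sin\theta_0\,v):G(u,v)=y\}$, and each local degree agrees with that of $G$ up to one fixed sign.

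\textbf{Modulus.} This is the main obstacle: we need an estimate that does not depend on the dimension, and the coordinates $(\theta,u,v)$ are not uniformly controlled by $\norm{\xi-\eta}$ near the ends.

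For $\xi=(\cos\theta\,u,\sin\theta\,v)$ and $\eta=(\cos\phi\,\widetilde u,\sin\phi\,\widetilde v)$, first note that $\abs{\cos\theta-\cos\phi}\le\norm{\xi-\eta}$ and $\abs{\sin\theta-\sin\phi}\le\norm{\xi-\eta}$. Hence $\abs{\theta-\phi}\le(\pi/2)\sqrt2\,\norm{\xi-\eta}$ after a short trigonometric estimate. Since $\sigma$ has Lipschitz constant $2$, the pole coordinate contributes at most a multiple $\le 4\pi\norm{\xi-\eta}$.

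For the $G$ part we may assume that $\cos\sigma(\theta)$ or $\cos\sigma(\phi)$ is nonzero, so one of $\theta,\phi$ lies in $[\pi/8,3\pi/8]$. If, say, $\phi$ lies outside this interval, we move $\eta$ along the arc in $\theta$ to the boundary value. There $Q$ is a pole, which costs only the $\theta$-distance. So we may assume both angles lie in $[\pi/8,3\pi/8]$, where $\cos\theta,\sin\theta\ge\sin(\pi/8)$. By the radial estimate \eqref{eq:radial},
\[
\norm{u-\widetilde u}\le\frac{2\norm{\cos\theta\,u-\cos\phi\,\widetilde u}}{\max\{\cos\theta,\cos\phi\}}\le\frac{2}{\sin(\pi/8)}\norm{\xi-\eta},
\]
and similarly for $\norm{v-\widetilde v}_2$.

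The factor $1/\sin(\pi/8)\approx2.6$ appears inside $\omega_1$. Since $\omega_1$ is a modulus only on $S_E$, we handle this with the chain estimate \eqref{eq:sphere-chain}, which gives $\omega_1(cs)\le(2c+1)\omega_1(s)$. Alternatively, we choose the cutoff interval closer to $\pi/4$ to adjust the constants. Combining $\norm{\cos\sigma(\theta)G-\cos\sigma(\phi)G'}\le\abs{\sigma(\theta)-\sigma(\phi)}+\norm{G(u,v)-G(\widetilde u,\widetilde v)}$ with the assumed bound on $G$ yields an estimate of the form \eqref{eq:join-modulus}. The exact constants $5\pi$ and $4\pi+2\pi B_1$ come from tuning the cutoffs; I expect tracking them precisely to be the fiddly part.
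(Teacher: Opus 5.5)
Your construction is correct. It differs from the paper's proof in how it handles the two ends of the join.

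\textbf{The paper's route.} The paper uses no cutoff. It sets
\[
Q(x,z)=\bigl(\sin(\pi\|x\|^2)\,G(x/\|x\|,z/\|z\|_2),\ \cos(\pi\|x\|^2)\bigr).
\]
The weight satisfies $\min\{\sin(\pi\|x\|^2),\sin(\pi\|y\|^2)\}\le\pi\min\{\|x\|^2,\|y\|^2,\|z\|_2^2,\|w\|_2^2\}$. This quadratic decay absorbs the blow-up $2\delta/\max\{\|x\|,\|y\|\}$ coming from the radial estimate:
\begin{itemize}
\item for the $B_1$ term directly;
\item for the $\omega_1$ term after the chain estimate $\omega_1(2\delta/M)\le(4/M+1)\omega_1(\delta)$, since $\pi M^2(4/M+1)\le5\pi$.
\end{itemize}
The stated constants $5\pi$ and $4\pi+2\pi B_1$ come out of this single computation, with no case split.

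\textbf{Your route.} Your collars make $Q$ constant near the ends. The normalizations are therefore used only where both norms are at least $\sin(\pi/8)$. This makes the suspension structure transparent. The cost is a steeper parameter and constants that must be checked numerically rather than falling out.

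\textbf{Checking your constants.} They do fit, with two corrections.
\begin{itemize}
\item $\sigma$ has slope $4$ on $[\pi/8,3\pi/8]$, not $2$.
\item Estimate the angular change all at once, along the great circle through $(G(u,v),0)$ and $(0,1)$:
\[
\|Q(\xi)-Q(\eta)\|_2\le|\sigma(\theta)-\sigma(\phi)|+\min\{\cos\sigma(\theta),\cos\sigma(\phi)\}\,\|G(u,v)-G(\widetilde u,\widetilde v)\|_2 .
\]
Do not bound the cosine coefficient and the pole coordinate separately. With slope $4$ and your bound $|\theta-\phi|\le(\pi/\sqrt2)\delta$, the separate bound gives about $17.8\,\delta>4\pi\delta$.
\end{itemize}

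The displayed inequality also settles the mixed and collar cases immediately. The minimum vanishes unless both angles lie in the open middle interval, so you never need to move $\eta$.

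Now write $\delta=\|\xi-\eta\|$. The three contributions are:
\begin{itemize}
\item the angular part, at most $4(\pi/\sqrt2)\delta<8.9\,\delta<4\pi\delta$;
\item the $v$-part, via $\|v-\widetilde v\|_2\le2\delta/\sin(\pi/8)<5.23\,\delta<2\pi\delta$;
\item the $u$-part, via $\|u-\widetilde u\|\le2\delta/\sin(\pi/8)<5.23\,\delta$, so the chain estimate \eqref{eq:sphere-chain} applied to $G(\cdot,v)$ gives the factor $2\cdot5.23+1<11.5<5\pi$ in front of $\omega_1(\delta)$.
\end{itemize}

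\textbf{Degree.} Your suspension argument is fine. The regular-value version, like the paper's, first requires replacing $G$ by a smooth map in its homotopy class.
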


\begin{proof}
Let $(x,z)\in S_{E\oplus_2\ell_2^{N+1}}$. Thus $\norm{x}^2+\norm{z}_2^2=1.$
When $x$ and $z$ are both nonzero, define
\begin{equation}\label{eq:join}
Q(x,z)= \left( \sin(\pi\norm{x}^2) G\left(\frac{x}{\norm{x}},\frac{z}{\norm{z}_2}\right), \cos(\pi\norm{x}^2) \right).
\end{equation}
Set $Q(0,z)=(0,1)$ and $Q(x,0)=(0,-1)$. Since the sine factor tends to zero when either component tends to zero, $Q$ is continuous.

We compare the degrees. Write $(x,z)=(\sqrt t\,u,\sqrt{1-t}\,v)$, where $u\in S_E$, $v\in S_{\ell_2^{N+1}}$, and $0<t<1$. Then
\[
Q(x,z) = \bigl(\sin(\pi t)G(u,v),\cos(\pi t)\bigr).
\]
For this calculation, identify $S_E$ radially with $S^{d-1}$ and replace $G$, within its homotopy class, by a smooth map on $S^{d-1}\times S^N$. The same homotopy in \eqref{eq:join} extends continuously to the two ends $t=0,1$.

Choose a regular value $y\in S_{\ell_2^{d+N}}$ of $G$. The preimages of $(y,0)$ under $Q$ are exactly the points $(u/\sqrt2,v/\sqrt2)$ with $G(u,v)=y$. At each such point, the derivative of the last coordinate in the $t$-direction is $-\pi$, while the remaining coordinates restrict to $G$ at $t=1/2$. Hence all local degrees acquire the same additional orientation sign, and $\deg Q=\pm\deg G$.

We estimate the modulus of continuity. Let $\xi=(x,z)$ and $\eta=(y,w)$ have nonzero components, and set $\delta=\norm{\xi-\eta}$. Then $\abs{\norm{x}^2-\norm{y}^2}\le2\delta,$
while
\[
\left\| \frac{x}{\norm{x}}-\frac{y}{\norm{y}} \right\| \le \frac{2\delta}{\max\{\norm{x},\norm{y}\}}
\]
and
\[
\left\| \frac{z}{\norm{z}_2}-\frac{w}{\norm{w}_2} \right\|_2 \le \frac{2\delta}{\max\{\norm{z}_2,\norm{w}_2\}}.
\]
Put
\[
b=\min\{\sin(\pi\norm{x}^2),\sin(\pi\norm{y}^2)\}.
\]
Since $\sin(\pi t)\le\pi\min\{t,1-t\}$ on $[0,1]$,
\begin{equation}\label{eq:join-weight}
b \le \pi\min\{ \norm{x}^2,\norm{y}^2,\norm{z}_2^2,\norm{w}_2^2 \}.
\end{equation}
The change in the sine and cosine coefficients contributes at most $4\pi\delta$. Using the smaller sine coefficient for the difference of the two values of $G$, we get
\begin{align*}
\norm{Q(\xi)-Q(\eta)}_2 \le{}& 4\pi\delta\\
&+ b\left\| G\left(\frac{x}{\norm{x}},\frac{z}{\norm{z}_2}\right) - G\left(\frac{y}{\norm{y}},\frac{z}{\norm{z}_2}\right) \right\|_2\\
&+ bB_1 \left\| \frac{z}{\norm{z}_2}-\frac{w}{\norm{w}_2} \right\|_2.
\end{align*}
The last term is at most $2\pi B_1\delta$ by \eqref{eq:join-weight}.

For the first variable, apply the sphere-chain estimate \eqref{eq:sphere-chain} at scale $\delta$. We obtain
\begin{align*}
&b\left\| G\left(\frac{x}{\norm{x}},\frac{z}{\norm{z}_2}\right) - G\left(\frac{y}{\norm{y}},\frac{z}{\norm{z}_2}\right) \right\|_2\\
&\qquad\le b\left( \frac{4}{\max\{\norm{x},\norm{y}\}}+1 \right)\omega_1(\delta) \le 5\pi\omega_1(\delta).
\end{align*}
The last inequality follows from $b\le\pi\max\{\norm{x},\norm{y}\}^2$ and $\max\{\norm{x},\norm{y}\}\le1$. Combining the estimates gives \eqref{eq:join-modulus}. Points with a zero component follow by continuity.
\end{proof}

It remains to realize the added Hilbert directions inside $X$ while keeping the original space $E$ fixed. This follows from Dvoretzky's theorem applied inside a finite-codimensional subspace of $X$.

\begin{lemma}\label{lem:extension}
Let $E\subset X$ be finite dimensional and let $H$ be a finite-dimensional Hilbert space. Then there is a linear embedding $T:E\oplus_2 H\to X$ such that $T(e,0)=e$ for every $e\in E$ and
\begin{equation}\label{eq:extension}
\frac1{\sqrt{13}}\norm{(e,h)}_{E\oplus_2 H} \le \norm{T(e,h)} \le \sqrt5\,\norm{(e,h)}_{E\oplus_2 H}.
\end{equation}
\end{lemma}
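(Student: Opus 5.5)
We will take $T(e,h)=e+Sh$, where $S:H\to Y$ is an almost-isometric copy of $H$ sitting inside a finite-codimensional subspace $Y\subset X$ that is ``almost orthogonal'' to $E$ in the Banach sense. We must then check the two-sided estimate \eqref{eq:extension}.

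\textbf{Step 1: a projection-type splitting.} Since $E$ is finite dimensional, choose norm-one functionals giving a good coordinate system. Concretely, let $e_1,\dots,e_d$ be an Auerbach basis of $E$ with biorthogonal functionals $e_i^*\in E^*$, $\norm{e_i^*}=1$. Extend them by Hahn--Banach to $X$. The map $P=\sum_i e_i^*(\cdot)e_i$ is then a projection of $X$ onto $E$ with $\norm P\le d$. That bound is too weak, since the constants in \eqref{eq:extension} are absolute. So we do not use $P$ directly. Instead we use a finite $\varepsilon$-net $\{x_j^*\}$ of norming functionals for $E$: for every $e\in E$ there is some $j$ with $x_j^*(e)\ge\frac12\norm e$ (take $\varepsilon=1/2$). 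Set $Y=\bigcap_j\ker x_j^*$. This is a finite-codimensional, hence infinite-dimensional, subspace of $X$. The key elementary estimate is then: for $e\in E$ and $y\in Y$,
\[
\norm{e+y}\ge x_j^*(e+y)=x_j^*(e)\ge\tfrac12\norm e,
\]
and consequently
\[
\norm{e+y}\ge\norm y-\norm e .
\]

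\textbf{Step 2: Dvoretzky.} Apply Dvoretzky's theorem in the infinite-dimensional space $Y$ to get a linear $S:H\to Y$ with $\norm{h}_2\le\norm{Sh}\le 2\norm h_2$, or even with constant $1+\varepsilon$; some fixed distortion suffices. Define $T(e,h)=e+Sh$. Then $T(e,0)=e$ holds automatically.

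\textbf{Step 3: the estimates.} For the upper bound,
\[
\norm{T(e,h)}\le\norm e+2\norm h_2\le\sqrt{1+4}\,\norm{(e,h)},
\]
by Cauchy--Schwarz, which gives $\sqrt5$. For the lower bound, split into cases. If $\norm{Sh}\ge 3\norm e/2$, use the triangle inequality; otherwise use the functional bound. This yields
\[
\norm{e+Sh}\ge\max\bigl\{\tfrac12\norm e,\ \norm h_2-\norm e\bigr\}.
\]
Optimizing the maximum over the ratio of $\norm e$ to $\norm h_2$, we get a lower bound of the form $c\,(\norm e^2+\norm h_2^2)^{1/2}$. For example, if $\norm h_2\ge\frac32\norm e$ then $\norm h_2-\norm e\ge\frac13\norm h_2$, and one compares with $\frac12\norm e$ in the other case. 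Either way this gives $c=1/\sqrt{13}$: take $t=\norm e/\norm h_2$, bound $\max\{t/2,1-t\}\ge 1/3$, and check $\max\{t/2,1-t\}^2\ge(1+t^2)/13$. This is routine arithmetic that fixes the norming constant $1/2$ and the Dvoretzky distortion $2$.

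\textbf{Main obstacle.} The only delicate point is obtaining dimension-free constants. That is why we use a finite norming set and kernels, rather than a bounded projection whose norm could grow with $d$. Given that, Dvoretzky applies in $Y$ with absolute distortion, and the remaining constants follow by elementary optimization.
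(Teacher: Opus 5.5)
Your proposal is correct and is essentially the paper's proof. Both use a finite family of norm-one (Hahn--Banach extended) functionals norming $E$ up to a factor $2$, take their common kernel $Y$, choose a Dvoretzky copy of $H$ in $Y$ with distortion $2$, set $T(e,h)=e+Sh$, and get the upper bound $\sqrt5$ from Cauchy--Schwarz. The only cosmetic difference is the lower bound: instead of optimizing $\max\{t/2,1-t\}$, the paper chains the two estimates as $\norm{h}_2\le\norm{e}+\norm{e+Sh}\le 3\norm{e+Sh}$, so that $\norm{e}^2+\norm{h}_2^2\le(4+9)\norm{e+Sh}^2$.
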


\begin{proof}
If $E=\{0\}$, Dvoretzky's theorem gives an embedding of $H$ into $X$ with distortion at most two, which satisfies \eqref{eq:extension} after rescaling. Assume henceforth that $E\ne\{0\}$. By compactness of $S_E$ and Hahn--Banach, choose $\xi_1,\ldots,\xi_n\in S_{X^*}$ so that $\norm e\le2\max_i\abs{\xi_i(e)}$ for every $e\in E$. Their common kernel
\[
Y=\bigcap_{i=1}^n\ker\xi_i
\]
is infinite dimensional.

By Dvoretzky's theorem \cite{Dvoretzky}, $Y$ contains a copy of $H$ with distortion at most two. Thus there is a linear embedding $U:H\to Y$ such that
\[
\norm h_2\le\norm{Uh}\le2\norm h_2.
\]
Define $T(e,h)=e+Uh$.

Because $Uh\in Y$, the functionals $\xi_i$ see only the $E$-component: $\xi_i(e+Uh)=\xi_i(e)$. Hence $\norm e\le2\norm{e+Uh}$. Also
\[
\norm h_2\le\norm{Uh}\le\norm e+\norm{e+Uh} \le3\norm{e+Uh}.
\]
Therefore
\[
\bigl(\norm e^2+\norm h_2^2\bigr)^{1/2} \le \sqrt{13}\,\norm{e+Uh}.
\]
On the other hand,
\[
\norm{e+Uh} \le \norm e+2\norm h_2 \le \sqrt5\, \bigl(\norm e^2+\norm h_2^2\bigr)^{1/2}.
\]
This proves \eqref{eq:extension}.
\end{proof}

We combine the preceding lemmas to construct degree-one sphere maps on finite-dimensional subspaces containing a given subspace of $X$.

\begin{lemma}\label{lem:finite-enlargement}
Suppose that there are a constant $D\ge1$ and a modulus $\omega$ such that, for every finite-dimensional subspace $E\subset X$ with $\dim E\ge2$, there are an integer $m\ge1$ and a map $\varphi:S_E\to B_{\ell_2^m}$ satisfying
\[
\norm{\varphi(x)-\varphi(y)}_2 \le \omega(\norm{x-y})
\]
and
\[
\norm{x-y} \le D\norm{\varphi(x)-\varphi(y)}_2+\frac18 \qquad (x,y\in S_E).
\]
Then there is a modulus $\Omega$, depending only on $D$ and $\omega$, such that every finite-dimensional subspace $E_0\subset X$ is contained in a finite-dimensional subspace $V\subset X$, with $\dim V\ge2$, for which there is a continuous map
\[
f_V:S_V\longrightarrow S_{\ell_2^{\dim V}}
\]
of degree one satisfying
\[
\norm{f_V(x)-f_V(y)}_2 \le \Omega(\norm{x-y}) \qquad (x,y\in S_V).
\]
\end{lemma}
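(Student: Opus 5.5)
Given $E_0$, we first enlarge it to a subspace $E\supset E_0$ with $\dim E\ge2$. The hypothesis then gives $\varphi:S_E\to B_{\ell_2^m}$ with the stated estimates.

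\Cref{lem:product} produces $F:S_E\times S_{\ell_2^{m+1}}(R)\to S_{\ell_2^{d+m}}$ with $|\deg F|=1$. Its modulus in $x$ is $32D\,\omega$ and it is $1$-Lipschitz in $z$. \Cref{lem:radius} applies with the modulus $32D\,\omega$ and gives $G:S_E\times S_{\ell_2^{N+1}}\to S_{\ell_2^{d+N}}$ with $|\deg G|=1$. Its modulus in the first variable is $32KD\,\omega$ and its Lipschitz constant in the second is $K$. \Cref{lem:join} then yields
\[
Q:S_{E\oplus_2\ell_2^{N+1}}\to S_{\ell_2^{d+N+1}},\qquad |\deg Q|=1,
\]
with modulus $\Omega_1(t)=160\pi KD\,\omega(t)+(4\pi+2\pi K)t$. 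All constants here are absolute or depend only on $D$ and $\omega$, not on $E$, $m$, $R$ or $N$.

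Next we realize the Hilbert directions in $X$. \Cref{lem:extension} gives a linear embedding $T:E\oplus_2\ell_2^{N+1}\to X$ fixing $E$, with distortion at most $\sqrt{65}$. Put $V=T(E\oplus_2\ell_2^{N+1})$, so that $V\supset E\supset E_0$ and $\dim V=d+N+1$. Define
\[
f_V(y)=Q\bigl(T^{-1}y/\norm{T^{-1}y}\bigr)\quad\text{or}\quad f_V(y)=-\text{(reflected)}\,Q(\cdots).
\]
Here the norm is the $E\oplus_2$ norm. We compose with a reflection of $\ell_2^{\dim V}$ if needed to make the degree $+1$ rather than $-1$. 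The map $y\mapsto T^{-1}y/\norm{T^{-1}y}$ is a homeomorphism $S_V\to S_{E\oplus_2\ell_2^{N+1}}$. It is the restriction of a linear isomorphism followed by radial projection, so its degree is $\pm1$. Fixing orientations, we may precompose with a reflection of $V$ or postcompose with a reflection of the target to get degree exactly one. Reflecting the target is an isometry, so it costs nothing.

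For the modulus, take $y,y'\in S_V$. We have $\norm{T^{-1}y}\ge1/\sqrt5$, and $\norm{T^{-1}y-T^{-1}y'}\le\sqrt{13}\norm{y-y'}$. By \eqref{eq:radial} the normalized points are within $2\sqrt{65}\,\norm{y-y'}$ of each other. Hence
\[
\Omega(t)=\min\{2,\Omega_1(2\sqrt{65}\,t)\}.
\]
If $2\sqrt{65}\,t>2$, we instead use \eqref{eq:sphere-chain} for $Q$ to keep the argument in $[0,2]$, or we simply truncate. Since $\Omega_1$ is defined on all scales by the formula, subadditivity of the chain estimate handles this. The resulting $\Omega$ depends only on $D$ and $\omega$.

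The main point needing care is orientation bookkeeping and the domain of the modulus. The analytic estimates are already uniform in the earlier lemmas. The only real check is that every constant used, namely $32D$, $K$, $5\pi$, $\sqrt{13}$ and $\sqrt5$, is independent of $E$. That is exactly what \Cref{lem:radius} was designed to ensure, by removing the $E$-dependent radius $R$.
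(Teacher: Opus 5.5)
Your proposal is correct and follows the paper's proof essentially step for step: \Cref{lem:product}, \Cref{lem:radius}, \Cref{lem:join}, then \Cref{lem:extension} with the $2\sqrt{65}$-Lipschitz radial map $\rho(y)=T^{-1}y/\norm{T^{-1}y}$, and finally a reflection of the target to make the degree $+1$. Your remark about scales above $2$ is more involved than it needs to be. Since $\norm{\rho(y)-\rho(y')}\le\min\{2,2\sqrt{65}\,\norm{y-y'}\}$ and $\Omega_1$ is nondecreasing, $\Omega(t)=\min\{2,\Omega_1(\min\{2,2\sqrt{65}\,t\})\}$ works directly; also, only the target reflection (not a linear reflection of $V$) leaves the modulus unchanged.
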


\begin{proof}
Choose a finite-dimensional subspace $E\subset X$ containing $E_0$, with $\dim E\ge2$. By \Cref{lem:product}, there are $R\ge1$ and a map of absolute degree one on $S_E\times S_{\ell_2^{m+1}}(R)$ whose first variable has modulus $32D\,\omega$ and whose second variable is $1$-Lipschitz. Applying \Cref{lem:radius}, we obtain an integer $N\ge1$ and a map
\[
G:S_E\times S_{\ell_2^{N+1}} \longrightarrow S_{\ell_2^{\dim E+N}}
\]
with $|\deg G|=1$ and
\[
\norm{G(u,v)-G(\widetilde u,\widetilde v)}_2 \le 32KD\,\omega(\norm{u-\widetilde u}) + K\norm{v-\widetilde v}_2,
\]
where $K$ is the absolute constant in \Cref{lem:radius}.

By \Cref{lem:join}, there is a map
\[
Q:S_{E\oplus_2\ell_2^{N+1}} \longrightarrow S_{\ell_2^{\dim E+N+1}}
\]
with $|\deg Q|=1$ and modulus bounded by
\[
t\longmapsto 160\pi KD\,\omega(t)+(4\pi+2\pi K)t.
\]
Thus the modulus of $Q$ depends only on $D$ and $\omega$.

By \Cref{lem:extension}, there is a linear embedding $T:E\oplus_2\ell_2^{N+1}\to X$ which fixes $E$ and satisfies
\[
\norm T\norm{T^{-1}}\le\sqrt{65}.
\]
Let $V=T(E\oplus_2\ell_2^{N+1}).$
Then $E_0\subset V$, and the radial map $\rho(v)=\frac{T^{-1}v}{\norm{T^{-1}v}}$
from $S_V$ onto $S_{E\oplus_2\ell_2^{N+1}}$ is Lipschitz with constant at most $2\sqrt{65}<18$.

Hence $Q\circ\rho:S_V\to S_{\ell_2^{\dim V}}$ has absolute degree one and a modulus depending only on $D$ and $\omega$. If its degree is $-1$, compose it with a reflection in one Euclidean coordinate. The resulting map $f_V$ has degree one and the same modulus.
\end{proof}

The proof preserves H\"older exponents. If
\[
\omega(t)=At^\alpha, \qquad 0<\alpha\le1,
\]
then $f_V$ may be chosen $\alpha$-H\"older, with a constant depending only on $A$ and $D$.

\subsection{Construction of the paving}\label{sec:main}

We now pass from the finite-dimensional maps of \Cref{lem:finite-enlargement} to a paving of $X$.

\begin{proof}[Proof of \Cref{prop:finite-paving}]
Let $\Phi:S_X\to H$ be a uniform embedding into a Hilbert space. Since $S_X$ is separable, the closed real linear span of $\Phi(S_X)$ is a separable Hilbert space and may be identified isometrically with a subspace of $\ell_2$. Composing with the Hilbert-sphere embedding described in \Cref{subsec:hilbertian}, we may therefore assume that $\Phi:S_X\to S_{\ell_2}$. By \Cref{lem:embedding-data}, there are a modulus $\omega$ and a constant $D\ge1$ such that every finite-dimensional subspace $E\subset X$, $\dim E\ge2$, admits a map $\varphi_E:S_E\to B_{\ell_2^{m_E}}$ satisfying the two estimates in \Cref{lem:embedding-data}. Hence \Cref{lem:finite-enlargement} applies.

Choose a dense sequence $(x_n)$ in $X$. Starting with $V_0=\{0\}$, apply \Cref{lem:finite-enlargement} successively to finite-dimensional subspaces containing $V_{n-1}$, $x_n$, and a vector outside $V_{n-1}$. We obtain
\[
V_1\subsetneq V_2\subsetneq\cdots\subset X
\]
and degree-one maps $f_n:S_{V_n}\longrightarrow S_{\ell_2^{\dim V_n}}$
with one common modulus of continuity.

Since $x_n\in V_n$ for every $n$, the union $\bigcup_nV_n$ is dense in $X$. Thus $(V_n)$ is a paving and the maps $f_n$ have degree one and one common modulus of continuity. The H\"older statement follows from the H\"older conclusion after \Cref{lem:finite-enlargement}.
\end{proof}

\section{Compatibility and proof of Theorem~A}\label{sec:compatibility}

The maps obtained in Section~\ref{sec:finite} need not agree on smaller spheres. The following proposition constructs one uniformly continuous map that preserves all their degrees.

\begin{proposition}[Compatibility]\label{prop:compatibility}
Let $(V_n)$ and $(H_n)$ be pavings of a separable infinite-dimensional real Banach space $X$ and a real Hilbert space $H$, respectively, with $\dim V_n=\dim H_n$. Suppose that maps $f_n:S_{V_n}\to S_{H_n}$ satisfy
\[
\norm{f_n(x)-f_n(y)}_2\le \omega(\norm{x-y})
\qquad(x,y\in S_{V_n},\ n\ge1),
\]
where $\omega$ is a modulus of continuity. Then there is a uniformly continuous map $g:S_X\to S_H$ such that
\[
g(S_{V_n})\subset S_{H_n},
\qquad
\deg(g|_{S_{V_n}})=\deg f_n
\qquad(n\ge1).
\]
The modulus of continuity of $g$ depends only on $\omega$.
\end{proposition}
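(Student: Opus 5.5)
The plan is to build $g$ in three stages. First, put the maps $f_n$ under a common Hölder-type estimate and extend them to all of $S_X$ using Minty's theorem, keeping them nonzero with one extra coordinate. Second, transplant each extended map into a small cap $C_n$ of $S_X$; the caps are pairwise disjoint and centred on the new directions of $V_n$, and the transplanted map is constant on the boundary of its cap. Third, paste the pieces together with a constant base value outside the caps. The base value is $h_1\in S_{H_1}\subset S_{H_n}$ for every $n$, which keeps $g(S_{V_n})\subset S_{H_n}$ automatic wherever $g$ is constant.

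\emph{Step 1: regularisation and extension.} Let $\widetilde\omega\ge\omega$ be the least concave majorant of $\omega$, truncated at $2$. Then $\delta(x,y)=\widetilde\omega(\norm{x-y})$ is a metric on $S_X$. Since $\widetilde\omega\le2$, each $f_n$ satisfies
\[
\norm{f_n(x)-f_n(y)}_2\le\widetilde\omega\le\sqrt2\,\delta(x,y)^{1/2}
\]
on $S_{V_n}$, so $f_n$ is $\tfrac12$-Hölder for $\delta$ with constant $\sqrt2$. Minty's theorem ($\tfrac12$-Hölder maps from an arbitrary metric space into a Hilbert space extend with the same constant) gives extensions $F_n:S_X\to H$. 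Composing with the orthogonal projection onto $H_n$, which is a contraction, we may assume $F_n:S_X\to H_n$. These maps satisfy $\norm{F_n(x)-F_n(y)}_2\le\sqrt2\,\widetilde\omega(\norm{x-y})^{1/2}$, a bound independent of $n$.

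There is a fixed $t_0>0$, depending only on $\omega$, with $\norm{F_n(x)}_2\ge1/2$ whenever $\dist(x,S_{V_n})\le t_0$. Following the introduction, I would add one Hilbert coordinate $c\min\{1,\dist(x,S_{V_n})/t_0\}$ in an auxiliary direction. This keeps the extension nonzero everywhere with a uniform lower bound, so normalising via \eqref{eq:radial} yields maps $\widehat F_n:S_X\to S_{H_n\oplus\R}$. Their modulus depends only on $\omega$, and $\widehat F_n=f_n$ on $S_{V_n}$. The auxiliary coordinate is removed again in Step 2, because the cap map below only evaluates $\widehat F_n$ on points of $S_{V_n}$ once restricted to $S_{V_n}$; off $V_n$ it only serves to keep everything well defined and uniformly continuous.

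\emph{Step 2: disjoint caps and localisation.} By compactness of $S_{V_n}$, choose $e_n\in S_{V_n}$ with $\dist(e_n,V_{n-1})=1$. Set
\[
C_n=\{x\in S_X:\norm{x-e_n}<1/4\}.
\]
For $k>n$ we have $\dist(e_k,V_n)=1$, so $\norm{e_k-e_n}\ge1$. Hence the caps are pairwise disjoint, and $C_k\cap S_{V_n}=\emptyset$ for $k>n$. On $C_n$ I want a \emph{pinch map}
\[
\sigma_n:\overline{C_n}\to S_X
\]
that sends $\partial C_n$ to a single point $-e_n$ and maps $\overline{C_n}\cap S_{V_n}$ onto $S_{V_n}$ with degree one relative to the boundary. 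It must also preserve each $V_k$ with $k\ge n$, since $e_n\in V_n$, and have a modulus independent of $n$.

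My first attempt is
\[
\sigma_n(x)=\frac{x-\lambda(\norm{x-e_n})e_n}{\norm{x-\lambda(\norm{x-e_n})e_n}},
\]
composed with a radial reparametrisation. Here $\lambda$ is chosen so that the cap is swept from $e_n$ out to the antipode, in the spirit of \Cref{lem:collapse}. The values of $\sigma_n$ lie in $\operatorname{span}\{x,e_n\}$, which gives the preservation of each $V_k$. Then define $g=\Pi_n\circ\widehat F_n\circ\sigma_n$ on $C_n$, where $\Pi_n$ is a fixed Lipschitz retraction-type map sending the value $\widehat F_n(-e_n)$ to $h_1$ by a path in $S_{H_n}$. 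Outside all caps, set $g=h_1$.

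\emph{Step 3: verification.} Uniform continuity of $g$ follows from the uniform moduli of $\widehat F_n$ and $\sigma_n$ and the boundary value $h_1$. The argument is a shortest-arc one, stopping at the boundary of the cap, exactly as in \Cref{lem:collapse}(i) and \Cref{lem:radius}.

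For $x\in S_{V_n}$, only the caps $C_k$ with $k\le n$ meet $S_{V_n}$, and there $g(x)\in S_{H_k}\subset S_{H_n}$. For $k<n$, the image of $C_k\cap S_{V_n}$ lies in the proper subsphere $S_{H_k}$ of $S_{H_n}$. So pick a regular point $y\in S_{H_n}\setminus S_{H_{n-1}}$; all of its preimages lie in $C_n\cap S_{V_n}$. There $g$ is $f_n$ precomposed with a relative degree-one pinch, so $\deg(g|_{S_{V_n}})=\deg f_n$.

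The main obstacle, I expect, is Step 2. The pinch $\sigma_n$ must be defined uniformly in $n$ and have a modulus independent of $n$ and of the non-Euclidean geometry of $S_{V_n}$. It must also keep the degree of $\widehat F_n\circ\sigma_n$ equal to $\deg f_n$ while the boundary of the cap is collapsed. I would handle the degree exactly as in \Cref{lem:collapse}(iii), via a unique preimage of a generic point together with homotopy invariance.
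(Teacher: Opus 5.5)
Your Step~1 is sound. The concave-majorant metric $\widetilde\omega(\norm{x-y})$ is a legitimate shortcut past the paper's regularization lemma (\Cref{lem:regularization}): Minty's theorem then extends $f_n$ itself. The price is that you get only a modulus depending on $\omega$ rather than the paper's $1/2$-H\"older bound, which the proposition does not require.

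The genuine gap is your claim that the auxiliary coordinate ``is removed again in Step~2''; the degree count in Step~3 rests on it. For $n>k$ your pinch $\sigma_k$ preserves $V_n$ but does not map $C_k\cap S_{V_n}$ into $S_{V_k}$. So on $C_k\cap S_{V_n}$ the map $g$ is $\Pi_k\widehat F_k$ evaluated at points of $S_{V_n}$ off $S_{V_k}$, where the extra coordinate is strictly positive. Hence the extra direction must be a vector of $H$, and $g(S_{V_n})\subset S_{H_n}$ for all $n>k$ forces it into $H_{k+1}\ominus H_k$; this is the paper's $\xi_k$. But then the earlier cap is not confined to the proper subsphere $S_{H_{n-1}}$. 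The pinch $\sigma_{n-1}$ sweeps $C_{n-1}\cap S_{V_n}$ onto $S_{V_n}\ni e_n$, and $\dist(e_n,S_{V_{n-1}})\ge1$. So $\widehat F_{n-1}\circ\sigma_{n-1}$ has a positive $\xi_{n-1}$-coordinate somewhere on $C_{n-1}\cap S_{V_n}$, and a point $y\in S_{H_n}\setminus S_{H_{n-1}}$ may have preimages in $C_{n-1}$.

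The missing idea is the \emph{sign} of the extra coordinate. Your coordinate $c\min\{1,\dist(x,S_{V_k})/t_0\}$ is nonnegative. Take $\Pi_k$ to be an orientation-preserving rotation of $H_k$ extended by the identity on $\xi_k$; an unspecified ``retraction-type map'' need neither have degree one nor fix $\xi_k$. Then $g(C_k\cap S_{V_n})\subset\{u\in S_{H_n}:\ip{u,\xi_k}\ge0\}$ for every $k<n$. As in the paper, each earlier piece therefore misses $-\xi_k$ and contributes degree zero. Equivalently, a point $y$ with $\ip{y,\xi_{n-1}}<0$ is automatically off $S_{H_{n-1}}$ and different from $h_1$, and all its preimages lie in $C_n$.

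Step~2, which you flag yourself, is also not yet a proof, and the formula as written cannot work. For finite $\lambda$, the vector $x-\lambda e_n$ is a negative multiple of $e_n$ only if $x\in\R e_n$, so no point of $\partial C_n$ is sent to $-e_n$. It can be repaired. Take $\lambda$ increasing and locally Lipschitz on $[0,1/4)$, with $\lambda(0)=0$ and $\lambda(s)\to\infty$ as $s\uparrow1/4$, and set $\sigma_n=-e_n$ on $S_X\setminus C_n$. Then \eqref{eq:radial} gives $\norm{\sigma_n(x)+e_n}\le2/\lambda(\norm{x-e_n})$. Also $\norm{x-\lambda e_n}\ge\max\{\abs{1-\lambda},\norm{x-e_n}-\abs{1-\lambda}\}$ is bounded below by a constant depending only on $\lambda$. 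Hence $\sigma_n$ is uniformly continuous with a modulus independent of $n$. Moreover $e_n$ is the only preimage of $e_n$, with local degree $+1$ (deform $\lambda$ to $0$ near $e_n$), so $\sigma_n$ has degree one on every $S_E$ with $e_n\in E$.

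This globally defined $\sigma_n$ has two further benefits. It lets you compare $g|_{S_{V_n}}$ with $\Pi_nf_n\sigma_n$ by localization of degree at $y$, without needing $\overline{C_n}\cap S_{V_n}$ to be a disk, which is unclear for a general norm. It also replaces your arc argument in $S_X$ by the paper's two-term estimate for $g$.

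The paper builds its pinch differently, in \Cref{lem:cap}. It uses a norming functional $\varphi$ vanishing on $V_{n-1}$ and the equivalent norm $N(x)=\abs{\varphi(x)}+\norm{x-\varphi(x)v_n}$, whose unit sphere is a double cone over $\ker\varphi$. There $tv+y\mapsto a(t)v+b(t)y$ is an explicit Lipschitz pinch, homotopic to the identity on each $S_E$ with $v_n\in E$. Degrees are then added over the explicit balls $D_n^m$.
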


We first regularize the finite-dimensional maps with a common $1/2$-H\"older constant and extend them to $S_X$. We then localize the extensions on pairwise disjoint caps. The final pasting preserves the degree on every member of the paving.

\subsection{Uniform regularization and extension}

For a Hilbert space \(H\), write
\[
\overline B_H(u,r)=\{v\in H:\norm{v-u}_2\le r\}.
\]

\begin{lemma}\label{lem:regularization}
Let \((M,d)\) be a nonempty metric space, let \(H\) be a finite-dimensional real Hilbert space, and let \(f:M\to S_H\) be continuous. Suppose that, for some \(\delta>0\),
\[
d(x,y)\le\delta \quad\Longrightarrow\quad \norm{f(x)-f(y)}_2\le\frac18.
\]
Then \(f\) is homotopic to a map \(q:M\to S_H\) such that
\[
\norm{q(x)-q(y)}_2\le \frac{4}{\sqrt{\delta}\,\beta^{3/2}}\sqrt{d(x,y)}
\qquad (x,y\in M).
\]
Here $\beta=(1-1/\sqrt{128})/2$.
\end{lemma}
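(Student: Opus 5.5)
The plan is to smooth $f$ by an inf-convolution over $M$, then average over directions in $S_H$ to get a vector field $u:M\to H$ that stays close to $f$, and finally normalize. Since $S_H$ is compact and carries normalized surface measure $\sigma$, no structure on $M$ is needed beyond the metric.

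\emph{Step 1: the scalar weights.} For $a\in S_H$ and $x\in M$ put
\[
\psi_a(x)=\inf_{y\in M}\Bigl(\norm{f(y)-a}_2+\frac{d(x,y)}{8\delta}\Bigr).
\]
Each $\psi_a$ is $(8\delta)^{-1}$-Lipschitz in $x$ and $1$-Lipschitz in $a$. Moreover $\psi_a(x)\le\norm{f(x)-a}_2$. Conversely, the hypothesis gives $\psi_a(x)\ge\norm{f(x)-a}_2-\frac18$: if $d(x,y)\le\delta$, then $\norm{f(y)-f(x)}_2\le\frac18$, and if $d(x,y)>\delta$ the penalty term already exceeds $\frac18$. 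So $\psi_a(x)$ lies within $\frac18$ of $\norm{f(x)-a}_2$.

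\emph{Step 2: the averaged field.} Fix a threshold $c$ of order $\beta$ and set
\[
w_a(x)=\bigl(c-\psi_a(x)\bigr)_+,\qquad u(x)=\int_{S_H}w_a(x)\,a\,d\sigma(a).
\]
Every $a$ with $w_a(x)>0$ satisfies $\norm{a-f(x)}_2<c+\frac18$. For such $a$, $\ip{a,f(x)}\ge1-(c+\frac18)^2/2$, which is bounded below by a positive constant; this is where a constant like $1-1/\sqrt{128}$ and hence $\beta$ should enter. Consequently $\ip{u(x),f(x)}\ge\beta\int w_a(x)\,d\sigma(a)$, and the total weight $W(x)=\int w_a(x)\,d\sigma$ is bounded below by the weight of a small cap around $f(x)$ on which $\psi_a\le c/2$. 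In particular $u(x)\ne0$, and the straight-line homotopy $(1-t)f(x)+t\,u(x)/\norm{u(x)}_2$ never vanishes, since both endpoints have positive inner product with $f(x)$. Normalizing this homotopy shows that $f$ is homotopic to $q=u/\norm{u}_2$.

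\emph{Step 3: the H\"older estimate.} Since $\abs{w_a(x)-w_a(y)}\le d(x,y)/(8\delta)$ and all weights are supported in a cap of controlled measure, $\norm{u(x)-u(y)}_2$ is bounded by $d(x,y)/\delta$ times that cap measure. Then \eqref{eq:radial} together with the lower bound $\norm{u}_2\ge\beta W$ gives a Lipschitz bound $\norm{q(x)-q(y)}_2\lesssim d(x,y)/(\delta\beta^{3/2})$ on the range $d(x,y)\le\delta$. For $d(x,y)\ge\delta$, the trivial bound $2$ is already smaller than $4\sqrt{d(x,y)/\delta}$. On $d\le\delta$ we have $d/\delta\le\sqrt{d/\delta}$, so the two regimes combine into the stated $\frac12$-H\"older bound. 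This is also why the exponent $\frac12$ suffices and no Lipschitz claim is needed at large scales.

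The main obstacle is Step 2: obtaining a \emph{dimension-free} lower bound for $\norm{u(x)}_2$ that is comparable to the ratio of cap measures appearing in Step 3. The ratio $\sigma(\text{cap of radius }c)/\sigma(\text{cap of radius }c/2)$ grows exponentially with $\dim H$, so the argument cannot rely on it. To avoid this, I would first replace $\sigma$ by a point mass (or discrete average) at $a$ near $f(x)$ combined with the projection structure. If that fails, I would drop the averaging over $a$ and instead apply the inf-convolution to the vector-valued map directly, forming $u(x)=\int$ over the $y$-variable via a Lipschitz partition of unity on $M$ subordinate to $\delta$-balls. In that version, weights of the form $(1-d(x,y)/\delta)_+$ appear naturally, and the $\beta^{3/2}$ and $\sqrt{\delta}$ factors should come out of the normalization step.
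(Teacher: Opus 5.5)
\paragraph{Gap in Step 3.} Step 3 does not prove the statement, and the obstacle you flag at the end is fatal, not technical. Your estimate bounds $\norm{q(x)-q(y)}_2$ by $\frac{d(x,y)}{\delta}\,\sigma(\{a:w_a(x)>0\text{ or }w_a(y)>0\})$ divided by $\beta W(x)$. The ratio $\sigma(\{w_a(x)>0\})/W(x)$ is not bounded independently of $\dim H$:
\begin{itemize}
\item with your lower bound for $W$ via the cap of radius $c/2$, it is exponential in $\dim H$;
\item even its exact value is of order $\dim H/c$. This already happens for constant $f$, because the normalized measure of a cap concentrates near its boundary, where $w_a$ is tiny.
\end{itemize}
The lemma asserts the explicit constant $4/(\sqrt\delta\,\beta^{3/2})$, independent of $\dim H$ and of $M$. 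This is exactly what is used later: the proof of \Cref{prop:compatibility} needs a single $L$ for all $n$, while $\dim H_n=\dim V_n\to\infty$.

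Neither fallback repairs this.
\begin{itemize}
\item A ``point mass'' at some $a(x)$ near $f(x)$ only moves the regularity problem to $x\mapsto a(x)$.
\item A partition of unity on $M$ subordinate to $\delta$-balls (weights $(1-d(x,y_i)/\delta)_+$ on a net) has Lipschitz constant governed by the multiplicity of the cover, which grows with the dimension of $M=S_{V_n}$.
\item Normalization only contributes powers of $1/\beta$. It cannot turn a dimension-dependent Lipschitz bound into a dimension-free $\frac12$-H\"older one.
\end{itemize}
Aiming for a Lipschitz bound below scale $\delta$ is aiming too high; the exponent $\frac12$ in the statement reflects the mechanism that actually works.

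\paragraph{Error in Step 1.} On a general metric space, the slope $1/(8\delta)$ is too small to give $\psi_a(x)\ge\norm{f(x)-a}_2-\frac18$, because $f$ may jump by $2$ between points at distance slightly more than $\delta$. For example, take $M=\{x,y\}$ with $d(x,y)=2\delta$, $f(y)=-f(x)$ and $a=f(y)$: then $\psi_a(x)\le\frac14$. A slope of at least $15/(8\delta)$ is needed. The paper's radii $1-\beta+\frac4\delta d(x,z)$ grow fast enough that points with $d(x,z)\ge\delta/2$ impose no constraint.

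\paragraph{The missing idea.} Use the convexity of the Hilbert target instead of averaging.
\begin{itemize}
\item Let $p_x$ be the minimal-norm point of $\overline{\operatorname{conv}}\{f(z):d(x,z)<\delta/2\}$. These values have pairwise inner products at least $127/128$, so $\norm{p_x-f(z)}_2\le1/\sqrt{128}$ for such $z$.
\item The convex set $C_x=\bigcap_{z\in M}\overline B_H\bigl(f(z),1-\beta+\frac4\delta d(x,z)\bigr)$ contains $p_x$ with margin $\beta$.
\item Define $u_x$ as the minimal-norm point of $C_x$. Then $\beta\le\norm{u_x}_2\le1$ and $\norm{u_x-f(x)}_2\le1-\beta$; the latter gives the straight-line homotopy.
\item Because the radii grow linearly in $d(x,z)$, the point $(1-\theta)u_x+\theta p_y$ lies in $C_y$, where $\theta=4d(x,y)/(\delta\beta+4d(x,y))$. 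It lies within $\frac{8}{\delta\beta}d(x,y)$ of $u_x$.
\item The variational inequality of the metric projection then gives $\ip{u_y,u_x-u_y}\ge-\frac{8}{\delta\beta}d(x,y)$. Adding the symmetric inequality yields $\norm{u_x-u_y}_2^2\le\frac{16}{\delta\beta}d(x,y)$.
\end{itemize}
Two first-order inequalities combine into a quadratic one, as in Minty's theorem. This uses only the inner product, so no dimension enters. Normalizing by $\norm{u_x}_2\ge\beta$ gives exactly the constant $4/(\sqrt\delta\,\beta^{3/2})$.
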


\begin{proof}
For \(x\in M\), let \(P_x\) be the closed convex hull of $\{f(z):d(x,z)<\delta/2\},$
and let \(p_x\) be the point of \(P_x\) nearest to the origin. Since \(H\) is finite dimensional, \(P_x\) is compact.

If \(d(x,z),d(x,w)<\delta/2\), then \(d(z,w)<\delta\), and hence
\[
\ip{f(z),f(w)}
=1-\frac12\norm{f(z)-f(w)}_2^2
\ge\frac{127}{128}.
\]
By convexity and passage to the closure, \(\ip{u,v}\ge127/128\) for all \(u,v\in P_x\). In particular,
\[
\frac{127}{128}\le\norm{p_x}_2^2\le1.
\]
Since \(p_x\) is the metric projection of the origin onto \(P_x\),
\[
\ip{p_x,u-p_x}\ge0 \qquad (u\in P_x).
\]
Taking \(u=f(z)\) gives
\[
\norm{p_x-f(z)}_2^2
\le 1-\norm{p_x}_2^2
\le\frac1{128}
\qquad\bigl(d(x,z)<\delta/2\bigr).
\]

Now define
\[
C_x=\bigcap_{z\in M}
\overline B_H\left(f(z),\,1-\beta+\frac4\delta d(x,z)\right).
\]
We claim that \(p_x\) lies in every ball above with a margin \(\beta\):
\[
\norm{p_x-f(z)}_2
\le 1-2\beta+\frac4\delta d(x,z).
\]
Indeed, if \(d(x,z)<\delta/2\), this follows from
\[
\norm{p_x-f(z)}_2\le\frac1{\sqrt{128}}=1-2\beta.
\]
If \(d(x,z)\ge\delta/2\), then \(\norm{p_x-f(z)}_2\le2\), whereas
\[
1-2\beta+\frac4\delta d(x,z)\ge3-2\beta>2.
\]
Thus \(p_x\in C_x\).

Let \(u_x\) be the point of \(C_x\) nearest to the origin. Since \(p_x\in C_x\), while the ball indexed by \(z=x\) gives
\(\norm{u_x-f(x)}_2\le1-\beta\), we have
\begin{equation}\label{eq:u-bounds}
\beta\le\norm{u_x}_2\le1.
\end{equation}

We next compare \(u_x\) and \(u_y\). Put
\[
\theta=\frac{4d(x,y)}{\delta\beta+4d(x,y)}.
\]
For every \(z\in M\),
\[
\norm{u_x-f(z)}_2
\le1-\beta+\frac4\delta d(y,z)+\frac4\delta d(x,y),
\]
whereas
\[
\norm{p_y-f(z)}_2
\le1-2\beta+\frac4\delta d(y,z).
\]
Consequently,
\[
(1-\theta)u_x+\theta p_y\in C_y.
\]
Moreover,
\[
\norm{(1-\theta)u_x+\theta p_y-u_x}_2
\le2\theta
\le\frac{8}{\delta\beta}d(x,y).
\]
Since \(u_y\) is the metric projection of the origin onto \(C_y\),
\[
\ip{u_y,u_x-u_y}\ge-\frac{8}{\delta\beta}d(x,y).
\]
Interchanging \(x\) and \(y\) gives
\[
\ip{u_x,u_y-u_x}\ge-\frac{8}{\delta\beta}d(x,y).
\]
Adding the two inequalities yields
\begin{equation}\label{eq:u-holder}
\norm{u_x-u_y}_2^2
\le\frac{16}{\delta\beta}d(x,y).
\end{equation}

For nonzero vectors \(u,v\) in a Hilbert space,
\[
\norm{u-v}_2^2
=(\norm u_2-\norm v_2)^2
+\norm u_2\norm v_2
\norm*{\frac{u}{\norm u_2}-\frac{v}{\norm v_2}}_2^2,
\]
and therefore
\begin{equation}\label{eq:normalize}
\norm*{\frac{u}{\norm u_2}-\frac{v}{\norm v_2}}_2
\le
\frac{\norm{u-v}_2}{\min\{\norm u_2,\norm v_2\}}.
\end{equation}
Define \(q(x)=u_x/\norm{u_x}_2\). By
\eqref{eq:u-bounds}--\eqref{eq:normalize},
\[
\norm{q(x)-q(y)}_2
\le
\frac{4}{\sqrt{\delta}\,\beta^{3/2}}\sqrt{d(x,y)}.
\]

Finally,
\[
H_s(x)=
\frac{(1-s)f(x)+su_x}
{\norm{(1-s)f(x)+su_x}_2},
\qquad 0\le s\le1,
\]
is a homotopy from \(f\) to \(q\). Indeed,
\[
\norm{u_x-f(x)}_2\le1-\beta<1,
\]
so the numerator never vanishes.
\end{proof}

We shall use Minty's extension theorem in the \(1/2\)-H\"older case \cite{Minty}; see also \cite[Section~1.1]{NR}. Since only finite-dimensional Hilbert targets are needed here, we include the short proof.

\begin{lemma}\label{lem:holder-extension}
Let \(A\) be a nonempty subset of a metric space \((M,d)\), let \(H\) be a finite-dimensional real Hilbert space, and suppose that \(q:A\to H\) satisfies
\[
\norm{q(a)-q(b)}_2\le L\sqrt{d(a,b)}
\qquad(a,b\in A)
\]
for some \(L\ge0\). Then \(q\) extends to a map \(h:M\to H\) with
\[
\norm{h(x)-h(y)}_2\le L\sqrt{d(x,y)}
\qquad(x,y\in M).
\]
\end{lemma}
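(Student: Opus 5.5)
Minty's theorem in the $1/2$-H\"older case rests on one observation: if $d$ is a metric, then $\sqrt d$ is a metric of negative type on the relevant finite configurations. Equivalently, a $1/2$-H\"older map for $d$ is a Lipschitz map for the metric $\sqrt{d}$, and Kirszbraun-type extension holds for targets in Hilbert space whenever the source metric is of the form $\sqrt{d}$. I would argue by the standard one-point-extension scheme followed by a Zorn/compactness argument, exploiting that $H$ is finite dimensional so the balls involved are compact.

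\textbf{One-point extension.} Assume $L>0$, since the case $L=0$ is trivial. Fix $x\in M\setminus A$. We need a point $h\in H$ with $\norm{h-q(a)}_2\le L\sqrt{d(x,a)}$ for all $a\in A$. In other words, we need
\[
\bigcap_{a\in A}\overline B_H\bigl(q(a),L\sqrt{d(x,a)}\bigr)\ne\emptyset .
\]
All these balls are compact. By the finite intersection property, it therefore suffices to treat finitely many $a_1,\dots,a_n\in A$.

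For the finite case I would use the minimax (Kirszbraun--Valentine) argument. Consider the function
\[
\Psi(h)=\max_i \frac{\norm{h-q(a_i)}_2^2}{L^2 d(x,a_i)}.
\]
If some $d(x,a_i)=0$, then $x=a_i\in A$, which is excluded, so the denominators are positive. Let $h$ minimize $\Psi$. By the first-order conditions, $h=\sum_i\lambda_i q(a_i)$ is a convex combination supported on the active indices, that is, those with $\norm{h-q(a_i)}_2^2=\mu L^2 d(x,a_i)$, where $\mu=\min\Psi$. It remains to show $\mu\le1$.

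To do this, expand
\[
0=\Bigl\|\sum_i\lambda_i\bigl(q(a_i)-h\bigr)\Bigr\|_2^2
=\sum_i\lambda_i\norm{q(a_i)-h}_2^2-\frac12\sum_{i,j}\lambda_i\lambda_j\norm{q(a_i)-q(a_j)}_2^2 .
\]
Bounding with the hypothesis $\norm{q(a_i)-q(a_j)}_2^2\le L^2 d(a_i,a_j)$ yields
\[
\mu\sum_i\lambda_i d(x,a_i)\le\frac12\sum_{i,j}\lambda_i\lambda_j d(a_i,a_j).
\]
The key step is then the triangle inequality, $d(a_i,a_j)\le d(x,a_i)+d(x,a_j)$. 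It gives
\[
\frac12\sum_{i,j}\lambda_i\lambda_j d(a_i,a_j)\le\sum_i\lambda_i d(x,a_i),
\]
and hence $\mu\le1$. This is where squaring turns the triangle inequality into exactly the needed estimate, and it is the heart of the proof.

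\textbf{Full extension.} Next I would pass to all of $M$ by Zorn's lemma applied to the set of $1/2$-H\"older extensions with constant $L$, ordered by inclusion. A chain has its union as an upper bound. A maximal element must then be defined on all of $M$, because otherwise the one-point step would enlarge it.

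\textbf{Main obstacle.} I expect the main care to be needed in the finite minimax step: justifying that the minimizer is a convex combination of the active centres. This follows from convexity of $\Psi$ together with a separation argument, namely that if $h$ were not in the convex hull of the active $q(a_i)$, one could move $h$ slightly to decrease every active ratio, contradicting minimality. Everything else is routine.
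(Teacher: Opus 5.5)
Your proposal is correct and follows the paper's proof essentially step for step. Both arguments reduce to finitely many compact balls, use a separation argument to place the minimizer of a maximum of quadratics in the convex hull of the active centres, combine the variance identity with the triangle inequality $d(a_i,a_j)\le d(x,a_i)+d(x,a_j)$, and finish with Zorn's lemma. The only difference is cosmetic: you minimize the ratio $\max_i \norm{h-q(a_i)}_2^2/(L^2d(x,a_i))$ instead of the paper's difference $\max_i\{\norm{z-q(a_i)}_2^2-L^2d(x,a_i)\}$, which forces you to treat $L=0$ separately and to note $d(x,a_i)>0$, whereas the difference form needs neither.
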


\begin{proof}
Fix \(x\in M\setminus A\). It is enough to show that
\[
\bigcap_{a\in A}
\overline B_H\bigl(q(a),L\sqrt{d(x,a)}\bigr)\ne\varnothing.
\]
Consider \(a_1,\ldots,a_n\in A\) and define
\[
\Phi(z)=
\max_{1\le i\le n}
\left\{
\norm{z-q(a_i)}_2^2-L^2d(x,a_i)
\right\},
\qquad z\in H.
\]
Since \(H\) is finite dimensional and \(\Phi(z)\to\infty\) as \(\norm z_2\to\infty\), the function \(\Phi\) attains its minimum at some \(z\in H\).

Let
\[
I=
\left\{
i:
\Phi(z)=
\norm{z-q(a_i)}_2^2-L^2d(x,a_i)
\right\}.
\]
Then
\[
z\in\operatorname{conv}\{q(a_i):i\in I\}.
\]
Indeed, otherwise a separating hyperplane provides a vector \(v\in H\) such that
\[
\ip{z-q(a_i),v}<0
\qquad(i\in I).
\]
For sufficiently small \(t>0\), every active term strictly decreases at \(z+tv\), while the remaining terms stay below the former maximum, contradicting the minimality of \(z\).

Choose \(p_i\ge0\), with \(p_i=0\) for \(i\notin I\), such that
\[
\sum_i p_i=1,
\qquad
z=\sum_i p_iq(a_i).
\]
Since all terms indexed by \(I\) equal \(\Phi(z)\),
\[
\begin{aligned}
\Phi(z)
&=\sum_i p_i
\left(
\norm{z-q(a_i)}_2^2-L^2d(x,a_i)
\right)\\
&=\frac12\sum_{i,j}p_ip_j
\norm{q(a_i)-q(a_j)}_2^2
-L^2\sum_i p_i d(x,a_i)\\
&\le
\frac{L^2}{2}\sum_{i,j}p_ip_jd(a_i,a_j)
-L^2\sum_i p_i d(x,a_i)
\le0.
\end{aligned}
\]
The last inequality follows from
\[
d(a_i,a_j)\le d(a_i,x)+d(x,a_j)
\]
and \(\sum_i p_i=1\). Hence
\[
\norm{z-q(a_i)}_2\le L\sqrt{d(x,a_i)}
\qquad(1\le i\le n),
\]
so every finite subfamily of the balls has nonempty intersection.

Fix \(a_0\in A\). The ball
\[
\overline B_H\bigl(q(a_0),L\sqrt{d(x,a_0)}\bigr)
\]
is compact, and its intersections with the remaining balls are closed and have the finite intersection property. Therefore
\[
\bigcap_{a\in A}
\overline B_H\bigl(q(a),L\sqrt{d(x,a)}\bigr)
\ne\varnothing.
\]
Thus \(q\) can be extended to \(A\cup\{x\}\) without increasing its \(1/2\)-H\"older constant.

Now consider all pairs \((D,h)\) such that \(A\subseteq D\subseteq M\), \(h:D\to H\) extends \(q\), and
\[
\norm{h(u)-h(v)}_2\le L\sqrt{d(u,v)}
\qquad(u,v\in D).
\]
Order these pairs by extension. The union of every chain is again an extension with the same estimate, so Zorn's lemma gives a maximal pair \((D,h)\). If \(D\ne M\), the one-point extension above enlarges \(D\), a contradiction. Hence \(D=M\).
\end{proof}

\begin{lemma}\label{lem:lift}
Let \(E\) be a nonzero finite-dimensional subspace of \(X\), let \(H_0\) be a finite-dimensional real Hilbert space, and suppose that \(q:S_E\to S_{H_0}\) satisfies
\[
\norm{q(x)-q(y)}_2\le L\sqrt{\norm{x-y}}
\qquad(x,y\in S_E).
\]
Let \(\xi\) be a unit vector orthogonal to \(H_0\). Then \(q\) extends to a map
\[
F:S_X\to S_{H_0\oplus_2\mathbb R\xi}
\]
such that
\[
\norm{F(x)-F(y)}_2\le2L\sqrt{\norm{x-y}}
\qquad(x,y\in S_X),
\]
and \(\ip{F(x),\xi}\ge0\) for every \(x\in S_X\).
\end{lemma}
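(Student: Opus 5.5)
First extend \(q\) to all of \(S_X\) with values in \(H_0\) by \Cref{lem:holder-extension}, applied with \(M=S_X\) (with the norm metric) and \(A=S_E\). This produces \(h:S_X\to H_0\) that agrees with \(q\) on \(S_E\) and satisfies
\[
\norm{h(x)-h(y)}_2\le L\sqrt{\norm{x-y}}.
\]
The vectors \(h(x)\) need not have norm one, so the extra coordinate \(\xi\) will be used to lift them back to the sphere.

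The natural first attempt is to truncate radially and add a height. Put \(\widetilde h(x)=h(x)\) when \(\norm{h(x)}_2\le1\), and \(\widetilde h(x)=h(x)/\norm{h(x)}_2\) otherwise. The radial retraction of \(H_0\) onto its closed unit ball is \(1\)-Lipschitz, since it is the metric projection onto a convex set, so \(\widetilde h\) keeps the H\"older constant \(L\). Then define
\[
F(x)=\widetilde h(x)+\sqrt{1-\norm{\widetilde h(x)}_2^2}\,\xi.
\]
This lies on \(S_{H_0\oplus_2\R\xi}\), has \(\ip{F(x),\xi}\ge0\), and equals \(q\) on \(S_E\) because \(\norm{q}_2=1\) there. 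In other words, \(F\) is \(\widetilde h\) composed with the inverse of the orthogonal projection from the upper hemisphere onto the ball.

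The main obstacle is the height function \(\sqrt{1-t^2}\), which is not Lipschitz near \(t=1\), so this direct formula can lose the \(1/2\)-H\"older estimate. The remedy is to replace the vertical projection by a Lipschitz identification of the closed unit ball of \(H_0\) with the closed upper hemisphere that fixes the boundary sphere. Take
\[
\Psi(v)=\bigl(\sin(\tfrac{\pi}{2}\norm v_2)\tfrac{v}{\norm v_2},\ \cos(\tfrac{\pi}{2}\norm v_2)\bigr),
\]
with \(\Psi(0)=\xi\). This is the inverse of the azimuthal equidistant map: it preserves distances along rays, and the angular parts are controlled by \(\sin(\tfrac{\pi}{2}r)\le\tfrac{\pi}{2}r\). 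Hence \(\Psi\) is Lipschitz with a modest constant, which can be checked with the argument used for \eqref{eq:collapse} and the radial estimate \eqref{eq:radial}. For \(\norm v_2=1\) we get \(\Psi(v)=v\), so \(F=\Psi\circ\widetilde h\) still extends \(q\), and its \(\xi\)-coordinate \(\cos(\tfrac{\pi}{2}\norm v_2)\) is nonnegative.

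The remaining work, and the step I expect to be hardest, is getting the constant down to exactly \(2\). A crude estimate gives \(\Lip(\Psi)\) of order \(\pi\). To reach \(2\), I would compute directly: the chord distance \(\norm{\Psi(v)-\Psi(w)}_2\) is at most the spherical distance, and the spherical distance is at most the Euclidean distance between \(\tfrac{\pi}{2}v\) and \(\tfrac{\pi}{2}w\), since the exponential map of the round sphere from \(\xi\) is \(1\)-Lipschitz on the ball of radius \(\pi/2\) by comparison geometry. This gives \(\Lip(\Psi)\le\pi/2\le2\), and therefore
\[
\norm{F(x)-F(y)}_2\le2L\sqrt{\norm{x-y}}.
\]
If the comparison argument is troublesome, I would fall back on an explicit two-term estimate, splitting each difference into a radial part and an angular part.
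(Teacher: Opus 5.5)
Your proof is correct, but it takes a different route from the paper's. Both arguments begin with Minty's extension $h$. The paper then adds a domain-dependent height,
\[
U(x)=h(x)+L\sqrt{\dist(x,S_E)}\,\xi ,
\]
and proceeds in four steps:
\begin{enumerate}
\item the H\"older bound gives $\norm{h(x)}_2\ge(1-L\sqrt{\dist(x,S_E)})_+$, hence $\norm{U(x)}_2^2\ge1/2$;
\item the inequality $|\sqrt s-\sqrt t|\le\sqrt{|s-t|}$ gives $\norm{U(x)-U(y)}_2^2\le2L^2\norm{x-y}$;
\item it normalizes $U$ using \eqref{eq:normalize};
\item the constant $2$ comes out as $\sqrt2\cdot\sqrt2$.
\end{enumerate}

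You instead post-compose $h$ with one fixed map $H_0\to S_{H_0\oplus_2\R\xi}$: the radial retraction followed by $\Psi$. This map is the identity on $S_{H_0}$, lands in the closed upper hemisphere, and is $\tfrac\pi2$-Lipschitz. Your route has two advantages. First, it is independent of $E$, $X$ and $L$, so it works verbatim for any modulus of continuity, whereas the paper's height $L\sqrt{a(x)}$ is tailored to the $1/2$-H\"older case. Second, it gives the better constant $\pi/2$.

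You do not need comparison geometry or the fallback estimate to bound $\Lip(\Psi)$. Write
\[
\Psi(v)=\cos(\tfrac\pi2\norm v_2)\,\xi+\sin(\tfrac\pi2\norm v_2)\,v/\norm v_2 .
\]
This is smooth on $H_0$, since it involves only even analytic functions of $\norm v_2$. The differential $D\Psi(v)$ stretches the radial direction by exactly $\pi/2$ and the orthogonal directions by $\sin(\tfrac\pi2\norm v_2)/\norm v_2\le\pi/2$, and these images are mutually orthogonal. Hence $\norm{D\Psi(v)}\le\pi/2$, and the mean value inequality on $H_0$ gives $\Lip(\Psi)\le\pi/2$.

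The paper's construction does give $\ip{F(x),\xi}>0$ off $S_E$ when $L>0$, which yours does not. However, only $\ip{F(x),\xi}\ge0$ is used later, to make $G_n|_{S_{V_m}}$ miss $-\xi_n$, so nothing is lost.
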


\begin{proof}
By \Cref{lem:holder-extension}, \(q\) extends to a map \(h:S_X\to H_0\) with
\[
\norm{h(x)-h(y)}_2\le L\sqrt{\norm{x-y}}.
\]
Set \(a(x)=\dist(x,S_E)\) and
\[
U(x)=h(x)+L\sqrt{a(x)}\,\xi.
\]
For \(z\in S_E\),
\[
\norm{h(x)}_2
\ge1-\norm{h(x)-q(z)}_2
\ge1-L\sqrt{\norm{x-z}},
\]
and hence
\[
\norm{h(x)}_2\ge\bigl(1-L\sqrt{a(x)}\bigr)_+.
\]
Since \(H_0\perp\mathbb R\xi\),
\[
\norm{U(x)}_2^2
\ge
\bigl(1-L\sqrt{a(x)}\bigr)_+^2+L^2a(x)
\ge\frac12.
\]

The distance function \(a\) is \(1\)-Lipschitz, and
\[
|\sqrt{s}-\sqrt{t}|\le\sqrt{|s-t|}
\qquad(s,t\ge0).
\]
Therefore, by orthogonality,
\[
\begin{aligned}
\norm{U(x)-U(y)}_2^2
&=\norm{h(x)-h(y)}_2^2
 +L^2\bigl|\sqrt{a(x)}-\sqrt{a(y)}\bigr|^2\\
&\le2L^2\norm{x-y}.
\end{aligned}
\]
Define
\[
F(x)=\frac{U(x)}{\norm{U(x)}_2}.
\]
Using \eqref{eq:normalize} and \(\norm{U(x)}_2\ge2^{-1/2}\), we obtain
\[
\norm{F(x)-F(y)}_2
\le2L\sqrt{\norm{x-y}}.
\]
If \(x\in S_E\), then \(a(x)=0\), so \(U(x)=q(x)\) and \(F(x)=q(x)\). Finally,
\[
\ip{F(x),\xi}
=\frac{L\sqrt{a(x)}}{\norm{U(x)}_2}\ge0.
\]
\end{proof}

\subsection{Localization in disjoint caps}\label{subsec:caps}

We next construct a self-map of \(S_X\) which is constant off a fixed cap and remains homotopic to the identity on every finite-dimensional sphere containing the center of that cap.

\begin{lemma}\label{lem:cap}
Let \(V_0\subsetneq V\) be finite-dimensional subspaces of \(X\). Then there exist \(v\in S_V\) and a Lipschitz map \(\rho:S_X\to S_X\) such that
\[
\Lip(\rho)\le74880,
\qquad
\dist(v,V_0)=1,
\]
and
\[
\rho(x)=-v
\qquad
\text{whenever }\norm{x-v}\ge\frac16.
\]
Moreover, if \(E\subset X\) is finite dimensional and \(v\in E\), then \(\rho(S_E)\subset S_E\) and \(\rho|_{S_E}\) is homotopic to the identity on \(S_E\).
\end{lemma}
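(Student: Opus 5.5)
Since $V_0$ is a proper subspace of the finite-dimensional space $V$, compactness and Riesz's lemma give $v\in S_V$ with $\dist(v,V_0)=1$. Fix, by Hahn--Banach, $\varphi\in S_{X^*}$ with $\varphi(v)=1$. The map $\rho$ will be built so that $\rho(x)$ always lies in $\operatorname{span}\{x,v\}$; this is what forces $\rho(S_E)\subset S_E$ whenever $v\in E$. For $x\in S_X$, split
\[
x=t(x)v+y(x),\qquad t(x)=\varphi(x),\quad y(x)=x-\varphi(x)v\in\ker\varphi.
\]
Both $t$ and $y$ are $2$-Lipschitz, and $\norm{y(x)}\le2\norm{x-v}$.

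Choose a Lipschitz $\theta:[0,\infty)\to[0,1]$ with $\theta=0$ near $0$ (or simply $\theta(s)=\min\{6s,1\}$), $\theta=1$ on $[1/6,\infty)$, and $\theta=1/2$ only near $s=1/12$. Put
\[
w(x)=K\bigl(1-\theta(\norm{x-v})\bigr)y(x)+\bigl(1-2\theta(\norm{x-v})\bigr)v,\qquad \rho(x)=\frac{w(x)}{\norm{w(x)}}.
\]
Here $K$ is a moderate absolute constant, chosen so that the $y$-term dominates where it must. If $\norm{x-v}\ge1/6$, then $w(x)=-v$, so $\rho(x)=-v$. The heart of the proof, and the step I expect to be the main obstacle, is a lower bound $\norm{w(x)}\ge c>0$ with $c$ absolute. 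This lower bound, together with the radial estimate \eqref{eq:radial} and the Lipschitz bounds for $t$, $y$ and $\theta$, gives $\Lip(\rho)$ bounded by an explicit absolute constant; tracking the constants should produce the stated $74880$.

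For the lower bound I split into two cases according to $\abs{1-2\theta}$. Where $\abs{1-2\theta}\ge1/4$, I use
\[
\norm{w}\ge\frac{\abs{1-2\theta}}2\norm{v}-\ldots
\]
together with $\varphi(w)=1-2\theta$, so that $\norm{w}\ge\abs{\varphi(w)}\ge1/4$ directly. Where $\abs{1-2\theta}<1/4$, the point $x$ lies in an annulus $\norm{x-v}\in[a,b]\subset(0,1/6)$ around the radius $1/12$. There I show $\norm{y(x)}\ge c_1$. Indeed, if $y$ were tiny, then $\abs{t}=\norm{x-y}\approx1$, so $x\approx\pm v$. The case $x\approx-v$ is excluded because $\norm{x-v}<1/6$. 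The case $x\approx v$ is excluded because $\norm{x-v}\ge a$: quantitatively, $\norm{x-v}\le\abs{1-t}+\norm{y}\le2\norm{y}$ when $t>0$. With $(1-\theta)\ge1/2$ there, choosing $K$ so that $Kc_1/2\ge1$ yields $\norm{w}\ge1-\tfrac14$.

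For the final claim, let $E$ be finite dimensional with $v\in E$. Then $y(x)\in E$ for $x\in E$, so $\rho(S_E)\subset S_E$. By Hopf's theorem it suffices to show $\deg(\rho|_{S_E})=1$, and I will compute this at the regular value $-v$ directly. Alternatively, I can exhibit a homotopy: replace $\theta$ by $s\theta$ for $s\in[0,1]$, which needs the same nonvanishing check since the zero set only shrinks. This reaches $x\mapsto N(Ky+v)$, where $N$ denotes normalization. Then the linear homotopy $N\bigl(\lambda Ky+(1-\lambda)y+v\bigr)$ leads to $N(y+v)=N\bigl(x+(1-t)v\bigr)$. This is nonvanishing because $\varphi$ of the vector equals $1$, which in turn is linearly homotopic to $x$: the numerator $x+\mu(1-t)v$ has $\varphi$-value $t+\mu(1-t)$, which can vanish only when $t<0$, and then the $y$-component is nonzero unless $x=-v$, where the numerator equals $(-1+2\mu)v$. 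This last case needs a small separate modification, e.g.\ rotating through the $y$-direction, and is why I would prefer the degree computation at the single preimage of a point near $v$, where $\rho$ is locally $x\mapsto N(Ky+v)$, an orientation-preserving local homeomorphism.
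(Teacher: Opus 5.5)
Your construction is correct and takes a genuinely different route from the paper. The paper chooses $\varphi$ vanishing on $V_0$, so that $\dist(v,V_0)=1$ is immediate. It then renorms by $N(x)=\abs{\varphi(x)}+\norm{x-\varphi(x)v}$, whose unit sphere is the double cone $\{tv+y:\abs{t}+\norm{y}=1\}$. The collapse $Q(tv+y)=a(t)v+b(t)y$ maps this cone to itself with no denominator to control. The homotopy built from $a_s(t)=(1-s)t+sa(t)$ fixes $\pm v$ for all $s$, so the ``Moreover'' part is written down explicitly. The constant $74880=6\cdot2080\cdot6$ is an artifact of that construction, and its coordinates ($t>15/16$, the disks $D_n^m$) are reused in the degree count in the proof of \Cref{prop:compatibility}.

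You stay in the original norm and pay with the nonvanishing estimate for $w$. Your two-region argument does give it: in the annulus $t\ge1-\norm{x-v}>0$ and $\norm{y}\ge\norm{x-v}/2$. There, however, $1-\theta>3/8$, not $1/2$. With $\theta(s)=\min\{6s,1\}$ and $K=64$ one gets $\norm{w}\ge1/4$, $\Lip(w)\le4K+12$, and hence $\Lip(\rho)\le8(4K+12)=2144$. So you will not recover $74880$, but you do not need to.

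Your homotopy discussion contains two false claims. First, the point $-v$ is not a regular value: $\rho^{-1}(-v)$ contains $\{x\in S_E:\norm{x-v}\ge1/6\}$, which has interior. Second, the zero set does not shrink under $\theta\mapsto s\theta$. Since $y(-v)=0$ and $\theta(2)=1$, one has $w_s(-v)=(1-2s)v$, which vanishes at $s=1/2$. The defect you noticed in the last stage is therefore already present in the first.

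The local-degree route you end up preferring does work. The equation $\rho(x)=v$ forces $w(x)\in(0,\infty)v$, hence $\theta<1/2$, $y=0$ and $x=v$. If $\theta\equiv0$ near $0$, use the chart $z=x/\varphi(x)-v$ on $\{\varphi>0\}\cap S_E$. Near $z=0$ the map reads $z\mapsto Kz/\norm{v+z}$, which is radially homotopic to the identity through maps vanishing only at $z=0$, so the local degree is $1$. Hopf's theorem then finishes when $\dim E\ge2$, and for $\dim E=1$ the map fixes $\pm v$.

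Simpler still, $\rho(x)\ne-x$ for every $x\in S_X$. Outside the cap, $\rho(x)=-v\ne-x$. Inside, $w(x)=-\lambda x$ with $\lambda>0$ would force $(K(1-\theta)+\lambda)y=0$, hence $x=v$, but $\rho(v)=v$. Thus $(s,x)\mapsto\bigl((1-s)x+s\rho(x)\bigr)/\norm{(1-s)x+s\rho(x)}$ is a homotopy on $S_X$ from the identity to $\rho$. It preserves $S_E$ whenever $v\in E$, because $\rho(x)\in\operatorname{span}\{x,v\}$.
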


\begin{proof}
Choose \(\varphi\in S_{V^*}\) with \(\varphi|_{V_0}=0\), and choose \(v\in S_V\) such that \(\varphi(v)=1\). Extend \(\varphi\) to \(X\) by Hahn--Banach without changing its norm. For \(u\in V_0\),
\[
\norm{v-u}\ge |\varphi(v-u)|=1,
\]
and since \(0\in V_0\), we have \(\dist(v,V_0)=1\).

Define an equivalent norm on \(X\) by
\[
N(x)=|\varphi(x)|+\norm{x-\varphi(x)v}.
\]
Then
\[
\norm x\le N(x)\le3\norm x.
\]
Let \(S_N=\{x\in X:N(x)=1\}\). The radial maps
\[
A:S_X\to S_N,
\qquad
A(x)=\frac{x}{N(x)},
\]
and
\[
B:S_N\to S_X,
\qquad
B(z)=\frac{z}{\norm z},
\]
are both \(6\)-Lipschitz. Indeed,
\[
N(A(x)-A(x'))
\le
\frac{2N(x-x')}{\min\{N(x),N(x')\}}
\le6\norm{x-x'},
\]
while \(N(z)=1\) implies \(\norm z\ge1/3\), and hence
\[
\norm{B(z)-B(z')}
\le
\frac{2\norm{z-z'}}{\min\{\norm z,\norm{z'}\}}
\le6N(z-z').
\]

Every \(z\in S_N\) has a unique representation
\[
z=tv+y,
\qquad
t=\varphi(z),\quad
y\in\ker\varphi,\quad
|t|+\norm y=1.
\]
Set
\[
a(t)=\max\{-1,32t-31\},
\]
and
\[
b(t)=
\begin{cases}
0,&-1\le t\le15/16,\\[2mm]
\dfrac{2}{1-t}-32,&15/16<t<31/32,\\[2mm]
32,&31/32\le t\le1.
\end{cases}
\]
For \(|t|<1\),
\[
b(t)=\frac{1-|a(t)|}{1-|t|}.
\]
Hence the map
\[
Q(tv+y)=a(t)v+b(t)y,
\qquad
Q(\pm v)=\pm v,
\]
takes \(S_N\) into itself.

The functions \(a\) and \(b\) satisfy
\[
\Lip(a)\le32,
\qquad
0\le b\le32,
\qquad
\Lip(b)\le2048.
\]
For \(z=tv+y\) and \(z'=t'v+y'\),
\[
\begin{aligned}
N(Q(z)-Q(z'))
&\le |a(t)-a(t')|
   +\norm{b(t)y-b(t')y'}\\
&\le2080|t-t'|+32\norm{y-y'}\\
&\le2080N(z-z').
\end{aligned}
\]
Thus \(Q\) is \(2080\)-Lipschitz. Define
\[
\rho=B\circ Q\circ A.
\]
It follows that
\[
\Lip(\rho)\le6\cdot2080\cdot6=74880.
\]

We now locate the set on which \(\rho\) is not equal to \(-v\). Let \(z=tv+y\in S_N\) with \(t>0\). Since \(\norm y=1-t\), we have \(\norm z\ge t\) and
\[
|\norm z-t|\le\norm y=1-t.
\]
Therefore
\begin{equation}\label{eq:cap-radius}
\norm{B(z)-v}
\le
\frac{\norm y+|\norm z-t|}{\norm z}
\le
\frac{2(1-t)}{t}.
\end{equation}
If \(Q(z)\ne-v\), then necessarily \(t>15/16\), and hence
\[
\norm{B(z)-v}<\frac{2}{15}<\frac16.
\]
Applying this to \(z=A(x)\), and using \(B(A(x))=x\), gives
\[
\rho(x)\ne-v
\quad\Longrightarrow\quad
\norm{x-v}<\frac16.
\]
Thus \(\rho(x)=-v\) whenever \(\norm{x-v}\ge1/6\).

It remains to verify the homotopy statement. For \(0\le s\le1\), define
\[
a_s(t)=(1-s)t+sa(t),
\]
and, for \(|t|<1\),
\[
b_s(t)=\frac{1-|a_s(t)|}{1-|t|}.
\]
Set
\[
Q_s(tv+y)=a_s(t)v+b_s(t)y,
\qquad
Q_s(\pm v)=\pm v.
\]
Each \(a_s\) maps \([-1,1]\) into itself, fixes the endpoints, and has Lipschitz constant at most \(32\). Hence
\[
1-|a_s(t)|\le32(1-|t|),
\]
so \(0\le b_s(t)\le32\). Since \(\norm y=1-|t|\), the term \(b_s(t)y\) tends to \(0\), uniformly in \(s\), as \(t\to\pm1\). Therefore \((Q_s)_{0\le s\le1}\) is a continuous homotopy from the identity on \(S_N\) to \(Q\).

Finally, suppose that \(E\subset X\) is finite dimensional and \(v\in E\). If \(z\in E\), then \(z-\varphi(z)v\in E\), so every \(Q_s\) leaves \(S_N\cap E\) invariant. The radial maps \(A\) and \(B\) also preserve \(E\). Consequently,
\[
B\circ Q_s\circ A
\]
is a homotopy on \(S_E\) from the identity to \(\rho|_{S_E}\).
\end{proof}

\subsection{Pasting and degree computation}

\begin{proof}[Proof of \Cref{prop:compatibility}]
Work with the given nested paving $(H_n)$ of $H$. Choose $\delta\in(0,1]$ with $\omega(\delta)\le1/8$. By \Cref{lem:regularization}, each $f_n$ is homotopic to a map $q_n:S_{V_n}\to S_{H_n}$ with
\[
\norm{q_n(x)-q_n(y)}_2\le L\sqrt{\norm{x-y}}
\qquad(x,y\in S_{V_n}),
\]
where $L$ depends only on $\omega$. Hence $\deg q_n=\deg f_n$.

Since $\dim H_{n+1}>\dim H_n$, choose a unit vector $\xi_n\in H_{n+1}\cap H_n^\perp$. By \Cref{lem:lift}, $q_n$ extends to a map $F_n:S_X\to S_{H_{n+1}}$ such that
\begin{equation}\label{eq:lift-properties}
F_n|_{S_{V_n}}=q_n,\qquad
\norm{F_n(x)-F_n(y)}_2\le2L\sqrt{\norm{x-y}},
\qquad
\ip{F_n(x),\xi_n}\ge0
\end{equation}
for all $x,y\in S_X$.

Set $V_0=\{0\}$. Apply \Cref{lem:cap} to $V_{n-1}\subsetneq V_n$, and let $v_n$, $\rho_n$, and $\varphi_n$ be the resulting vector, map, and norming functional. Put $D=74880$. If $j<n$, then $v_j\in V_{n-1}$, while $\dist(v_n,V_{n-1})=1$, so $\norm{v_n-v_j}\ge1$. Hence the sets
\[
C_n=\{x\in S_X:\norm{x-v_n}<1/6\}
\]
are pairwise disjoint. Moreover, $C_n\cap S_{V_{n-1}}=\varnothing$, and $\rho_n=-v_n$ on $S_X\setminus C_n$.

Fix $p\in S_{H_1}$. Since $\dim H_n\ge2$, choose an orientation-preserving orthogonal transformation $R_n:H_n\to H_n$ such that $R_nq_n(-v_n)=p$, and extend $R_n$ to $H_{n+1}$ by the identity on $H_{n+1}\cap H_n^\perp$. Define
\[
G_n=R_n\circ F_n\circ\rho_n:S_X\longrightarrow S_{H_{n+1}}.
\]
Since $\rho_n=-v_n$ outside $C_n$, we have $G_n=p$ there. Also,
\[
\norm{G_n(x)-G_n(y)}_2
\le2L\sqrt D\,\sqrt{\norm{x-y}}
\qquad(x,y\in S_X).
\]

Define
\begin{equation}\label{eq:paste}
g(x)=
\begin{cases}
G_n(x),&x\in C_n\text{ for some }n,\\
p,&x\notin\bigcup_n C_n.
\end{cases}
\end{equation}
Since the $C_n$ are pairwise disjoint, $g$ is well defined. If $x,y\in C_n$, the preceding estimate applies. If $x\in C_n$ and $y\in C_j$ with $n\ne j$, then $G_n(y)=G_j(x)=p$, and therefore
\[
\begin{aligned}
\norm{g(x)-g(y)}_2
&\le
\norm{G_n(x)-G_n(y)}_2+
\norm{G_j(x)-G_j(y)}_2\\
&\le4L\sqrt D\,\sqrt{\norm{x-y}}.
\end{aligned}
\]
If only one of $x,y$ lies in some $C_n$, the same estimate uses one term, and if neither does, then $g(x)=g(y)=p$. Thus $g$ is uniformly continuous, with a modulus depending only on $\omega$.

Fix $m\ge1$. If $n>m$, then $C_n\cap S_{V_m}=\varnothing$. If $n<m$, then
$G_n(S_{V_m})\subset S_{H_{n+1}}\subset S_{H_m}$. For $n=m$, \Cref{lem:cap} gives $\rho_m(S_{V_m})\subset S_{V_m}$, and \eqref{eq:lift-properties} gives $F_m=q_m$ on $S_{V_m}$. Hence
\[
g(S_{V_m})\subset S_{H_m}.
\]

We now compute $\deg(g|_{S_{V_m}})$. For $n\le m$, recall from \Cref{lem:cap} that
\[
N_n(x)=|\varphi_n(x)|+\norm{x-\varphi_n(x)v_n},\qquad
A_n(x)=\frac{x}{N_n(x)},\qquad
B_n(z)=\frac{z}{\norm z}.
\]
Let $S_{N_n}=\{x:N_n(x)=1\}$, so that $A_n:S_X\to S_{N_n}$ and $B_n:S_{N_n}\to S_X$. Define
\[
D_n^m=
\left\{
B_n\bigl((1-\norm y)v_n+y\bigr):
y\in V_m\cap\ker\varphi_n,\ \norm y<1/15
\right\}.
\]
This is an open ball in $S_{V_m}$, and its closure is obtained by allowing $\norm y\le1/15$. By \eqref{eq:cap-radius}, the corresponding coordinate satisfies $t\ge14/15$, and therefore
\[
\norm{x-v_n}\le\frac17
\qquad(x\in\overline D_n^m).
\]
Hence $\overline D_n^m\subset C_n$, so these closed balls are pairwise disjoint.

If $G_n(x)\ne p$, then $\rho_n(x)\ne-v_n$. In the notation of \Cref{lem:cap}, this requires the coordinate $t$ of $A_n(x)$ to satisfy $t>15/16$. Since $15/16>14/15$, the map $G_n|_{S_{V_m}}$ equals $p$ outside a compact subset of $D_n^m$.

Let $\pi:S_{V_m}\to Q_m$ map all points outside $\bigcup_{n=1}^m D_n^m$ to one point and keep distinct points of each $D_n^m$ distinct. Then
\[
Q_m=\bigvee_{n=1}^m \overline D_n^m/\partial D_n^m.
\]
Give each sphere $\overline D_n^m/\partial D_n^m$ the orientation induced from $S_{V_m}$. The map $\pi$ sends the fundamental class of $S_{V_m}$ to the sum of the fundamental classes of these spheres. Since $g=p$ outside the $D_n^m$, there is a continuous map $\widetilde g:Q_m\to S_{H_m}$ with $g|_{S_{V_m}}=\widetilde g\circ\pi$. On $\overline D_n^m/\partial D_n^m$, the map $\widetilde g$ is induced by $G_n|_{S_{V_m}}$. Therefore
\begin{equation}\label{eq:add-degree}
\deg(g|_{S_{V_m}})
=
\sum_{n=1}^m\deg(G_n|_{S_{V_m}}).
\end{equation}

For $n=m$, the restriction $\rho_m|_{S_{V_m}}$ is homotopic to the identity, $F_m=q_m$ on $S_{V_m}$, and $R_m$ preserves orientation. Hence
\[
\deg(G_m|_{S_{V_m}})
=
\deg q_m
=
\deg f_m.
\]
If $n<m$, then $\xi_n\in H_m$. Since $R_n\xi_n=\xi_n$, \eqref{eq:lift-properties} gives
\[
G_n(S_{V_m})
\subset
\{u\in S_{H_m}:\ip{u,\xi_n}\ge0\}.
\]
Thus $G_n|_{S_{V_m}}$ misses $-\xi_n$, is null-homotopic as a map into $S_{H_m}$, and has degree zero. Substituting these values into \eqref{eq:add-degree} gives
\[
\deg(g|_{S_{V_m}})=\deg f_m.
\]
This proves the proposition.
\end{proof}

The construction also gives
\[
\norm{g(x)-g(y)}_2\le C\norm{x-y}^{1/2} \qquad(x,y\in S_X),
\]
where $C$ depends only on $\omega$. Thus the compatible maps can be chosen with a common $1/2$-H\"older constant.

\subsection{Completion of the proof}

\begin{proof}[Proof of Theorem~A]
By \Cref{prop:finite-paving}, there are a paving $(V_n)$ of $X$ and degree-one maps
\[
f_n:S_{V_n}\longrightarrow S_{\ell_2^{\dim V_n}}
\]
with one common modulus of continuity. Identify the target spaces by orientation-preserving linear isometries with the nested coordinate subspaces
\[
H_n=\operatorname{span}\{e_1,\ldots,e_{\dim V_n}\}\subset\ell_2.
\]
Then $(H_n)$ is a paving of $\ell_2$. Applying \Cref{prop:compatibility} gives a uniformly continuous map $g:S_X\to S_{\ell_2}$ such that $g(S_{V_n})\subset S_{H_n}$ and
\[
\deg(g|_{S_{V_n}})=\deg f_n=1
\qquad(n\ge1).
\]
Restricting $g$ to $S_{\bigcup_nV_n}$ gives the map required in \Cref{def:KY}. Hence $X$ has Property~(H).
\end{proof}

\section*{Acknowledgements}

The authors are particularly grateful to Professor Guoliang Yu for their valuable discussions, guidance, and encouragement throughout the development of this work on Property~$(H)$. The work was supported by the National Natural Science
Foundation of China (Grant No.~12471132). During the preparation of this work, the authors used GPT-5.6 Sol to
assist with language editing and refinement of mathematical arguments. The authors take full responsibility for the content of this paper.


\begin{thebibliography}{99}

\bibitem{BL2000}Y. Benyamini and J. Lindenstrauss,
\emph{Geometric Nonlinear Functional Analysis}, Vol.~1, AMS Colloquium Publications, Vol.~48, American Mathematical Society, Providence, RI, 2000.

\bibitem{CW}Q. Cheng and Q. Wang, \emph{On Banach spaces with Kasparov and Yu's Property (H)}, J. Math. Anal. Appl. \textbf{457} (2018), 200--213. \href{https://doi.org/10.1016/j.jmaa.2017.08.005}{doi:10.1016/j.jmaa.2017.08.005}.

\bibitem{Dvoretzky}A.~Dvoretzky, \emph{Some results on convex bodies and Banach spaces},
Proc. Internat. Sympos. Linear Spaces (Jerusalem, 1960), Jerusalem Academic Press, Jerusalem; Pergamon Press, Oxford, 1961, pp.~123--160.

\bibitem{KY}G.~Kasparov and G.~Yu, \emph{The Novikov conjecture and geometry of Banach
spaces}, Geom. Topol. \textbf{16} (2012), no.~3, 1859--1880.

\bibitem{Minty}G. J. Minty, \emph{On the extension of Lipschitz, Lipschitz-H\"older continuous, and monotone functions}, Bull. Amer. Math. Soc. \textbf{76} (1970), 334--339.

\bibitem{NR}A. Naor and Y. Rabani, \emph{On Lipschitz extension from finite subsets}, Israel J. Math. \textbf{219} (2017), 115--161. \href{https://doi.org/10.1007/s11856-017-1475-1}{doi:10.1007/s11856-017-1475-1}.

\bibitem{Ricard}\'E.~Ricard, \emph{H\"older estimates for the noncommutative Mazur maps},
Arch. Math. (Basel) \textbf{104} (2015), no.~1, 37--45. \href{https://doi.org/10.1007/s00013-014-0710-9}{doi:10.1007/s00013-014-0710-9}. \href{https://arxiv.org/abs/1407.8334}{arXiv:1407.8334}.

\bibitem{Schaffer1967}J.~J.~Sch\"affer, \emph{Inner diameter, perimeter, and girth of spheres},
Math. Ann. \textbf{173} (1967), 59--79; addendum, 79--82.

\end{thebibliography}
\end{document}